\numberwithin{equation}{section}
\theoremstyle{plain}
\newtheorem{theorem}[subsubsection]{Theorem}
\newtheorem{theorem-construction}[subsubsection]{Theorem-Construction}
\newtheorem{lemma}[subsubsection]{Lemma}
\newtheorem{prop}[subsubsection]{Proposition}
\newtheorem{cor}[subsubsection]{Corollary}
\newtheorem{conj}[subsubsection]{Conjecture}
\newtheorem*{claim}{Claim}
\theoremstyle{definition}
\newtheorem{remark}[subsubsection]{Remark}
\newtheorem{ex}[subsubsection]{Example}
\def\AA{\mathbb{A}}
\def\CC{\mathbb{C}}
\def\GG{\mathbb{G}}
\def\II{\mathbb{I}}
\def\KK{\mathbb{K}}
\def\LL{\mathbb{L}}
\def\NN{\mathbb{N}}
\def\QQ{\mathbb{Q}}
\def\RR{\mathbb{R}}
\def\TT{\mathbb{T}}
\def\ZZ{\mathbb{Z}}
\def\calB{\mathcal{B}}
\def\calE{\mathcal{E}}
\def\calF{\mathcal{F}}
\def\calG{\mathcal{G}}
\def\calK{\mathcal{K}}
\def\calN{\mathcal{N}}
\def\calO{\mathcal{O}}
\def\calS{\mathcal{S}}
\newcommand\cC{\mathcal{C}}
\newcommand\cD{\mathcal{D}}
\newcommand\cE{\mathcal{E}}
\newcommand\cF{\mathcal{F}}
\newcommand\cG{\mathcal{G}}
\newcommand\cI{\mathcal{I}}
\newcommand\cK{\mathcal{K}}
\newcommand\cL{\mathcal{L}}
\newcommand\cN{\mathcal{N}}
\newcommand\cO{\mathcal{O}}
\newcommand\cS{\mathcal{S}}
\newcommand\cU{\mathcal{U}}
\newcommand\cV{\mathcal{V}}
\newcommand\cZ{\mathcal{Z}}
\def\bR{\mathbf{R}}
\def\bT{\mathbf{T}}
\newcommand\frR{\mathfrak{R}}
\newcommand\frc{\mathfrak{c}}
\newcommand\frg{\mathfrak{g}}
\newcommand\frh{\mathfrak{h}}
\newcommand\frl{\mathfrak{l}}
\newcommand\fm{\mathfrak{m}}
\newcommand\frp{\mathfrak{p}}
\newcommand\frt{\mathfrak{t}}
\newcommand{\Bun}{\textup{Bun}}
\newcommand{\Der}{\textup{Der}}
\newcommand\ev{\textup{ev}}
\newcommand\Gal{\textup{Gal}}
\newcommand{\Gr}{\textup{Gr}}
\newcommand{\Hecke}{\textup{Hecke}}
\newcommand\IC{\textup{IC}}
\newcommand\id{\textup{id}}
\newcommand{\Ind}{\textup{Ind}}
\newcommand\Lie{\textup{Lie}}
\newcommand\Loc{\textup{Loc}}
\newcommand\Mod{\textup{Mod}}
\newcommand{\Perf}{\textup{Perf}}
\newcommand\Perv{\textup{Perv}}
\newcommand{\QCoh}{\textup{QCoh}}
\newcommand{\red}{\textup{red}}
\newcommand\Rep{\textup{Rep}}
\newcommand{\Res}{\textup{Res}}
\newcommand\res{\textup{res}}
\newcommand\Sat{\textup{Sat}}
\newcommand\Spec{\textup{Spec}\ }
\newcommand\St{\mathit{St}}
\newcommand\Sym{\textup{Sym}}
\newcommand{\univ}{\textup{univ}}
\newcommand\Aut{\textup{Aut}}
\newcommand\Hom{\textup{Hom}}
\newcommand\End{\textup{End}}
\newcommand\GL{\textup{GL}}
\newcommand\PGL{\textup{PGL}}
\newcommand\SL{\textup{SL}}
\newcommand{\Gm}{\GG_m}
\newcommand{\ad}{\textup{ad}}
\newcommand{\Ad}{\textup{Ad}}
\newcommand{\incl}{\hookrightarrow}
\newcommand{\isom}{\stackrel{\sim}{\to}}
\renewcommand{\a}{\alpha}
\newcommand{\g}{\gamma}
\newcommand\D{\Delta}
\renewcommand{\l}{\lambda}
\renewcommand{\L}{\Lambda}
\newcommand{\om}{\omega}
\renewcommand{\th}{\theta}
\newcommand{\Th}{\Theta}
\newcommand{\ph}{\varphi}
\newcommand{\z}{\zeta}
\newcommand{\twtimes}[1]{\stackrel{#1}{\times}}
\newcommand{\jiao}[1]{\langle{#1}\rangle}
\newcommand{\wt}[1]{\widetilde{#1}}
\newcommand{\wh}[1]{\widehat{#1}}
\newcommand\quash[1]{}
\newcommand\un{\underline}
\newcommand\ov{\overline}
\newcommand\bs{\backslash}
\newcommand\upH{\textup{H}}
\newcommand\tdN{\widetilde{\calN}^{\vee}}
\newcommand\dG{G^{\vee}}
\newcommand\dB{B^{\vee}}
\newcommand{\Max}{\textup{Max}}
\newcommand\hs{\heartsuit}
\newcommand\nat{\natural}
\newcommand\xr{\xrightarrow}
\newcommand{\Maps}{\textup{Maps}}
\newcommand{\Sh}{\mathit{Sh}}
\newcommand{\Coh}{\textup{Coh}}
\newcommand{\Coord}{\textup{Coord}}
\newcommand{\Hitch}{\textup{Hitch}}
\newcommand{\sing}{\textup{sing}}
\title[Spectral action in Betti Geometric Langlands]
{Spectral action in Betti Geometric Langlands}
\dedicatory{}
\author{David Nadler}
\thanks{}
\address{Department of Mathematics, UC Berkeley, Evans Hall, Berkeley, CA 94720}
\email{nadler@math.berkeley.edu}
\author{Zhiwei Yun}
\thanks{}
\address{Department of Mathematics, MIT,  77 Massachusetts Ave, Cambridge, MA 02139}
\email{zyun@mit.edu}
\date{}
\subjclass[2010]{14D24, 22E57}
\keywords{}
\begin{document}


\begin{abstract}
Let $X$ be a smooth projective curve, $G$ a reductive group, and $\Bun_G(X)$ the moduli of $G$-bundles on $X$.
For each point of $X$,  the Satake category acts by Hecke modifications on sheaves on $\Bun_G(X)$. We show that, for   sheaves with nilpotent singular support, the action is  locally constant with respect to the point of $X$.
This equips sheaves with nilpotent singular support with a module structure over  perfect complexes on the Betti moduli $\Loc_{\dG}(X)$ of dual group local systems. In particular, we establish the ``automorphic to Galois'' direction in the Betti Geometric Langlands correspondence  -- to each indecomposable automorphic sheaf, we attach a dual group local system  -- and define the Betti version of V.~Lafforgue's excursion operators.
\end{abstract}


\maketitle

\tableofcontents


\section{Introduction}



\subsection{Motivation: the Betti Geometric Langlands}
In~\cite{BNbetti}, a Betti version of the geometric Langlands correspondence was formulated. Let us recall its statement.

Let $G$ be a  complex reductive group and $X$ a connected smooth projective complex curve. Let $\Bun_G(X)$ be the moduli stack of $G$-bundles on $X$. Fix a commutative coefficient ring $E$ that is noetherian and of finite global dimension.
Let $\Sh(\Bun_G(X), E)$ be the dg derived category of all complexes of $E$-modules on $\Bun_G(X)$.

Let
 $T^*\Bun_G(X)$ be  its  cotangent bundle (the underlying classical stack of the total space
of its cotangent complex), and let 
\begin{equation*}
\calN_G(X) \subset T^*\Bun_G(X)
\end{equation*}
 be the global nilpotent cone (the zero-fiber
of the Hitchin system).

To an object
$\cF\in \Sh(\Bun_G(X), E)$,
we can assign its singular support 
\begin{equation*}
\sing(\cF)\subset T^*\Bun_G(X)
\end{equation*}
 which is closed, conic, and coisotropic.

On the automorphic side,  we introduce the full dg subcategory  
\begin{equation*}
\Sh_{\calN_G(X)}(\Bun_G(X), E) \subset \Sh(\Bun_G(X), E)
\end{equation*}
 of complexes with singular support lying in $\calN_G(X)$. Since $\calN_G(X)$ is Lagrangian,
 for each object  of $\Sh_{\calN_G(X)}(\Bun_G(X), E)$, by \cite[Thm 8.5.5]{KS} there is a stratification of $\Bun_G(X)$ along which the object is locally constant (see the discussion and references of Section~\ref{sss:sing scheme} below). Thus objects in $\Sh_{\calN_G(X)}(\Bun_G(X), E)$ are weakly constructible, though their stalks do not necessarily satisfy the finite-dimensional
 cohomology requirement of constructibility.
 
\begin{remark}
It was conjectured by Laumon~\cite[Conj. 6.3.1]{L0} that cuspidal Hecke eigensheaves have nilpotent singular support. Thus the imposition of nilpotent singular support conjecturally keeps in play the most interesting automorphic sheaves. See the introduction of \cite{BNbetti} for detailed justification of imposing the nilpotent singular support condition.
\end{remark}

On the spectral side,  let $\dG$ be the Langlands dual group of $G$, viewed as a group scheme over $\ZZ$. Let $\dG_{E}$ be its base change to $E$. Let $\Loc_{\dG}(X)$ be the Betti derived stack over $\ZZ$ of topological $\dG$-local systems on $X$, and let $\Loc_{\dG}(X)_{E}$ be its base change to $E$.  
For a choice of base-point $x_0\in X$, we have the monodromy isomorphism   
\begin{equation*}
\xymatrix{
\Loc_{\dG}(X) \simeq \Hom(\pi_1(X, x_0), \dG)/\dG.
}
\end{equation*}
Let $\Ind\Coh_\calN(\Loc_{\dG}(X)_{E})$ denote the dg category of ind-coherent sheaves on $\Loc_{\dG}(X)_{E}$ with nilpotent
singular support  (in the sense developed by Arinkin and Gaitsgory \cite{AG}).

\begin{conj}[Rough form of Betti Geometric Langlands correspondence, see {\cite[Conjecture 1.5]{BNbetti}}]\label{main conj: intro betti} Let $E$ be a field of characteristic zero. Then there is an equivalence
\begin{equation}\label{eq: betti conj}
\xymatrix{
 \Ind\Coh_\calN ( \Loc_{\dG}(X)_{E}) \ar[r]^-\sim & \Sh_{\calN_{G}(X)}(\Bun_{G}(X), E)
}
\end{equation}
compatible with Hecke modifications and parabolic induction.
\end{conj}

\begin{remark}
There are natural generalizations of the conjecture for $G$-bundles with tame level structures
and corresponding $\dG$-local systems on open curves, see \cite[Conjecture 4.12]{BNbetti}.
\end{remark}

This paper contains three main results motivated by Conjecture \ref{main conj: intro betti}. 

First, in order for Conjecture \ref{main conj: intro betti} to  make sense, one needs to check that the Hecke functors on $\Sh(\Bun_G(X), E)$ in fact preserve the subcategory $\Sh_{\calN_{G}(X)}(\Bun_{G}(X), E)$.  Our first main result  (see Theorem \ref{thm: intro}) provides a strong version of this statement.

Second, on the spectral side, there is a natural tensor action of the tensor category
 of quasi-coherent complexes,
 or equivalently ind-coherent sheaves with trivial singular support
 \begin{equation*}
\xymatrix{
\QCoh(\Loc_{\dG}(X)_{E}) \simeq  \Ind\Coh_0 ( \Loc_{\dG}(X)_{E}) \curvearrowright \Ind\Coh_{\cN} ( \Loc_{\dG}(X)_{E}).
}
\end{equation*}
Therefore the Betti geometric Langlands conjecture predicts an action of $\QCoh(\Loc_{\dG}(X)_{E})$ on the automorphic category $\Sh_{\calN_{G}(X)}(\Bun_{G}(X), E)$. The construction of such an action is the second main result of this paper (see Theorem \ref{thm: intro action}). 
Such an action was used recently in~\cite{NY3pts} to construct the geometric Langlands correspondence in a special case, and we expect it to be a key tool in further developments in the Betti Geometric Langlands program.
 
Lastly, our third main result  (see Theorem \ref{cor: intro Langlands par}) applies the preceding to construct a Betti Langlands parameter to indecomposable automorphic sheaves. 
 
 We  formulate our main results immediately below in more detail.

\subsection{Singular support under Hecke modifications}

Set $\calO=\CC[[t]]$ to be the power series ring, and $\calK=\CC((t))$ its fraction field.
Let $G(\calK)$ be the loop group, $G(\calO)$ its parahoric arc subgroup,
and $ \Gr_G = G(\calK)/G(\calO)$ the affine Grassmannian.

For each point $x\in X$, and the choice of a local coordinate at $x$, the Satake category $\Sat_G = \Sh_c(G(\cO)\backslash \Gr_G,  E)$ of $G(\cO)$-equivariant 
constructible complexes on $\Gr_G$ with compact support acts on $\Sh(\Bun_G(X), E)$ via Hecke modifications at $x$. 
For a fixed kernel $\cV \in \Sat_G$, this gives a Hecke functor
\begin{equation*}
\xymatrix{
H_{\cV, x}:\Sh(\Bun_G(X), E) \ar[r] &  \Sh(\Bun_G(X), E).
}
\end{equation*}
We will consider a version of the Satake category, denoted $\Sat^{0}_{G}$, whose objects carry equivariant structures for changes of the local parameters, see Section~\ref{sss:Sat0}. For $\cV\in \Sat^{0}_{G}$, one can define a family version of $H_{\cV,x}$ by allowing $x$ to vary
\begin{equation*}
\xymatrix{
H_{\cV}:\Sh(\Bun_G(X), E) \ar[r] &  \Sh(\Bun_G(X)\times X, E).
}
\end{equation*}
For example, we can take the kernel $\cV^\l \in \Sat^{0}_G$ given by the constant sheaf on a $G(\cO)$-orbit
$ \Gr^\l_G \subset \Gr_G $ in which case we write 
\begin{equation*}
\xymatrix{
H^\l = H_{\cV^\l}:\Sh(\Bun_G(X), E) \ar[r] &  \Sh(\Bun_G(X)\times X, E).
}
\end{equation*}
Conversely, these basic kernels generate all possibilities.

Here is our main technical result, proved below in Section~\ref{s: main thm}.

\begin{theorem}\label{thm: intro} 
For any kernel $\cV\in \Sat^{0}_{G}$, the Hecke functor $H_{\cV}$
preserves nilpotent singular support, 
and,  for sheaves with nilpotent singular support,
it does not introduce non-zero singular codirections along the curve: for $\cF\in \Sh(\Bun_G(X), E)$, we have
\begin{equation*}
\xymatrix{
\sing(\calF) \subset \calN_G(X) \implies 
\sing(H_{\cV}(\calF)) \subset \calN_G(X) \times X 
}
\end{equation*}
where $X\subset T^*X$ denotes the zero-section.
\end{theorem}

\begin{remark}
We also discuss in Section~\ref{s: variations} how Theorem~\ref{thm: intro} extends to $G$-bundles with level structures.
\end{remark}


\subsection{Betti spectral action}

By the geometric Satake correspondence with general ring coefficients~\cite[(1.1)]{MV} (see \cite{Gi} for the case of complex coefficients), the convolution product on $\Sat_G$ preserves the perverse heart $\Sat^\heartsuit_G = \Perv_{c}(G(\cO)\backslash \Gr_G, E)$; the induced monoidal structure on $\Sat^\heartsuit_G$ 
extends to a tensor structure; and there is a natural
tensor equivalence
 \begin{equation*}
\xymatrix{
\Sat^\heartsuit_G \simeq \Rep(\dG_{E})
}
\end{equation*}
with the tensor category of representations of $\dG_{E}$ on finitely generated $E$-modules. 



Via the geometric Satake correspondence, $\Rep(\dG_{E})$ acts on $\Sh_{\calN_G(X)} (\Bun_G(X), E)$ by Hecke functors.  Theorem  \ref{thm: intro} implies the action is locally constant in the modification point $x\in X$ 
 (see Proposition~\ref{p: chiral}). 
In Section~\ref{ss: action} below, we deduce from this the following second main result.


\begin{theorem}[Betti spectral action]\label{thm: intro action}
Let $E$ be a field of characteristic zero. Let $\Perf(\Loc_{\dG}(X)_{E})$ be the tensor dg category of perfect complexes on $\Loc_{\dG}(X)_{E}$. Then there is an $E$-linear tensor action
\begin{equation*}
\xymatrix{
\Perf(\Loc_{\dG}(X)_{E}) \curvearrowright \Sh_{\calN_G(X)} (\Bun_G(X), E) }
\end{equation*}
such that for any point $x\in X$, its restriction via pullback along the natural evaluation
 \begin{equation*}
\xymatrix{
\Rep(\dG_{E}) \ar[r]^-{\ev_{x}^{*}} & \Perf(\Loc_{\dG}(X)_{E})}
\end{equation*}
is isomorphic, under the Geometric Satake correspondence, to the Hecke action
of $\Sat^\heartsuit_G$ at the point $x\in X$.
\end{theorem}


\begin{remark}
In the setting of $\cD$-modules with no prescribed singular support, the construction of an analogous action of quasi-coherent sheaves on the stack of de Rham connections is a deep ``vanishing theorem" whose proof is sketched by Gaitsgory in ~\cite{Gvanish}.
\end{remark}

\begin{remark} \label{rem: chiral homol}
In Section~\ref{ss: action} below, we deduce Theorem~\ref{thm: intro action} ``by hand", but
 one can  also appeal to the general machinery of topological chiral homology (\cite[\S5.5.4]{HA}) as we sketch here.
   
   To the tensor dg category $\Perf(B\dG_{E})$ of perfect complexes and curve $X$, one  can assign the topological chiral homology $\int_X \Perf(B\dG_{E})$.
  It is again a tensor dg category and comes equipped  with a tensor functor from $\Perf(B\dG_{E})$, for each $x\in X$. In fact, it is universal for having such functors along with equivalences between them along paths, together with higher coherences along higher simplices.
    When $E$ is a field of characteristic zero, the results of~\cite{BFN} provide a tensor equivalence
\begin{equation*}
\int_X \Perf(B\dG_{E})\simeq \Perf(\Loc_{\dG}(X)_{E})
\end{equation*}
compatible with the tensor functors from $\Perf(B\dG_{E})$, for each $x\in X$.

Recall that Theorem  \ref{thm: intro} implies the   $\Perf(B\dG_{E})$-action on $\Sh_{\calN_G(X)} (\Bun_G(X), E)$ is locally constant in the modification point $x\in X$.
It follows from the universal property that the action descends to a $\int_X \Perf(B\dG_{E})$-action, 
 and thus a
 $ \Perf(\Loc_{\dG}(X)_{E})$-action,
 as asserted in Theorem~\ref{thm: intro action}.
\end{remark}


\subsection{Betti Langlands parameters}When $E$ is a field of characteristic zero, 
one can use the action of Theorem~\ref{thm: intro action} to associate a Betti Langlands parameter to an indecomposable constructible automorphic complex with finite type support.  
Here, as usual, indecomposable means the complex cannot be expressed as a direct sum of non-trivial summands.

%
When $E$ is  an algebraically closed field of characteristic zero,  the action of $\Perf(\Loc_{\dG}(X)_{E})$ on $\Sh_{\calN_G(X)} (\Bun_G(X), E)$ of 
Theorem \ref{thm: intro action} implies that the dg algebra $\cO(\Loc_{\dG}(X)_{E})$ acts on each object $\cF\in \Sh_{\calN_G(X)} (\Bun_G(X), E)$. Moreover, when $\cF$ satisfies certain finiteness properties, for example when $\cF$ is an irreducible perverse sheaf, this action allows us to assign a maximal ideal of $\cO(\Loc_{\dG}(X)_{E})$ to $\cF$, which determines a $\dG(E)$-local system $\rho_{\cF}$ on $X$ up to semisimplification. 
When $E$ is of arbitrary characteristic, we do not establish the categorical action analogous to that in Theorem~\ref{thm: intro action}.  Nevertheless the assignment $\cF\mapsto \rho_{\cF}$ can be constructed, as Theorem \ref{cor: intro Langlands par}  below states.

Consider the  full subcategory 
 \begin{equation*}
 \Sh_{\calN_G(X),!}(\Bun_{G}(X), E) \subset
 \Sh_{\calN_G(X)} (\Bun_G(X), E)
 \end{equation*}
   of objects of the form $j_{!}\cF_{\cU}$, where $j:\cU\incl \Bun_{G}(X)$ is an open embedding of a finite type substack, and $\cF_{\cU}$ is a constructible complex on $\cU$. The restriction to such complexes is used to guarantee  finiteness of their endomorphisms (see the proof of Theorem-Construction~\ref{cor: Langlands par}). The following 
   third main result will be proved in Section~\ref{s:Betti par}.

\begin{theorem}[Betti Langlands parameter]\label{cor: intro Langlands par} 
Let $E$ be an algebraically closed field. To any indecomposable object $\cF\in  \Sh_{\calN_{G}(X),!}(\Bun(X), E)$ one can canonically attach a semisimple $\dG(E)$-local system  $\rho_{\cF}$ over $X$. Moreover, if $\cF$ is a Hecke eigensheaf with eigenvalue  $\rho\in \Loc_{\dG}(X)(E)$, then $\rho_{\cF}$ is isomorphic to the semisimplification of $\rho$.
\end{theorem}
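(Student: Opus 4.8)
The plan is to construct the assignment $\cF \mapsto \rho_{\cF}$ by extracting a point of $\Loc_{\dG}(X)_E$ from the action of the (ordinary, underived) global functions $\cO(\Loc_{\dG}(X)_E)$ on the graded vector space $\bigoplus_i \Ext^i(\cF, \cF)$, and then upgrading to a local system via Tannakian reconstruction at each point of $X$. Concretely, I would first reduce to the case $E$ of characteristic zero, where Theorem~\ref{thm: intro action} is available: on $\Sh_{\calN_G(X),!}(\Bun_G(X), E)$ the algebra $R := \cO(\Loc_{\dG}(X)_E)$ acts functorially, hence $R$ maps to the center of $\Ext^\bullet(\cF,\cF)$. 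The key finiteness input, to be established using the ``$!$'' shape $\cF = j_!\cF_\cU$ together with the weak constructibility coming from nilpotent singular support, is that $\Ext^\bullet(\cF,\cF)$ is a \emph{finitely generated} module over (the degree-zero part of) $R$ — or at least that its support is a well-defined closed subscheme. For $\cF$ indecomposable, $\Ext^0(\cF,\cF) = \End(\cF)$ is a local ring (Fitting's lemma / Krull--Schmidt for the idempotent-complete category of such objects), so the image of $R \to \End(\cF)$ lands in a local ring; composing with the residue field and using that $E$ is algebraically closed and $\Loc_{\dG}(X)_E$ is of finite type over $E$ picks out a closed point, i.e.\ a semisimple $\dG(E)$-local system $\rho_{\cF}$ (points of the coarse space of $\Loc_{\dG}(X)_E$ being exactly semisimplification classes).

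Next I would verify the functoriality/naturality statement: if $\cF$ is a Hecke eigensheaf with eigenvalue $\rho$, then for every $V \in \Rep(\dG_E)$ and every $x \in X$ the Hecke functor $H_{V,x}$ acts on $\cF$ by tensoring with the fiber $V_{\rho(x)}$ (or rather by the monodromy-twisted local system $\rho^*(\text{tautological})$ evaluated at $x$). Since $\ev_x^*\colon \Rep(\dG_E) \to \Perf(\Loc_{\dG}(X)_E)$ generates $R$ as $x$ ranges over $X$ (Betti $\Loc$ is reconstructed from the $\pi_1$-representation, which is built from loops, hence from the $\ev_x$), the maximal ideal cut out in $R$ by $\cF$ is forced to be the one corresponding to $\rho$; and the point of the coarse space attached to a closed point $\rho \in \Loc_{\dG}(X)(E)$ is its semisimplification. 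This gives $\rho_{\cF} \cong \rho^{ss}$.

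The last task is to remove the characteristic-zero hypothesis. Here I cannot invoke Theorem~\ref{thm: intro action} directly, but I \emph{can} invoke Theorem~\ref{thm: intro}: the Hecke functors $H^\lambda_x$ themselves are defined over the ring $E$ of coefficients, they preserve $\Sh_{\calN_G(X)}(\Bun_G(X), E)$, and for $\cF$ with nilpotent singular support $H^\lambda(\cF)$ has no singular codirections along $X$, hence (by the relative microlocal/constructibility argument behind the theorem) is a local system on $X$ with values in $\Sh_{\calN_G(X)}(\Bun_G(X), E)$. Applying this for all dominant coweights $\lambda$ and all points $x$, and using the monoidal structure (geometric Satake over $E$, \cite{MV}), one obtains a monoidal functor $\Rep(\dG_E) \to \bigl(\text{graded }\End(\cF)\text{-modules}\bigr)$, functorially in $x$, i.e.\ a $\dG_E$-local system on $X$ valued in $\End(\cF)$-modules; taking $\cF$ indecomposable so that $\End(\cF)$ is local with residue field $E$, base-changing to the residue field, and Tannakian reconstruction produces an honest $\dG(E)$-local system, whose semisimplification is well-defined and independent of choices — this is $\rho_{\cF}$. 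The Hecke-eigensheaf compatibility is checked exactly as in characteristic zero, since there the eigenvalue equation $H^\lambda(\cF) \cong \cF \boxtimes (V^\lambda)_\rho$ holds on the nose.

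The main obstacle, I expect, is the finiteness/local-ring step: one must show that for an indecomposable $\cF = j_!\cF_\cU$ with nilpotent singular support the endomorphism algebra $\End(\cF)$ is genuinely local (equivalently that Krull--Schmidt holds in the relevant subcategory), despite $\cF$ having infinite-dimensional stalks in general. This should follow from the fact that $j_!\cF_\cU$ is supported, in a suitable ind-sense, on a finite-type open $\cU$ where $\cF_\cU$ is an ordinary constructible complex — so $\End(j_!\cF_\cU) = \End_\cU(\cF_\cU) \oplus (\text{corrections})$ with the first factor a finite $E$-algebra — but making this precise, and in particular controlling the $\Hom$ from $j_!\cF_\cU$ into sheaves supported on the boundary $\Bun_G(X) \setminus \cU$, is the technical heart of the argument and is presumably where Section~\ref{s:Betti par} does its real work.
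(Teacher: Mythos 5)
Your proposal diverges substantially from the paper's actual argument, and the divergence matters: the theorem is stated for an \emph{arbitrary} algebraically closed field $E$, and your treatment of the general (possibly positive) characteristic case has a genuine gap.

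Where you agree with the paper: the finiteness/locality step. For $\cF = j_!\cF_\cU$ with $\cU$ finite type and $\cF_\cU$ constructible, $j^!j_! \simeq \id$ gives $\End(\cF) \simeq \End_\cU(\cF_\cU)$, a finite-dimensional $E$-algebra; if $\cF$ is indecomposable then $Z(\End(\cF))$ is a local artinian $E$-algebra. You correctly identify this, though you overestimate the difficulty — there are no ``corrections'' from the boundary to control, and the paper handles it in one line. This is \emph{not} the technical heart.

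Where you diverge: the paper does not construct $\rho_{\cF}$ from the spectral action of Theorem~\ref{thm: intro action} (which it only establishes for $E$ of characteristic zero), nor does it use Tannakian reconstruction. It instead builds, for any algebraically closed $E$, a ring map $\sigma\colon R^{\univ} \to \cZ(\Sh_{\calN(\LL_S)}(\Bun(\LL_S),E))$ via \emph{Betti excursion operators}: the composite $c^\flat \circ (1,\un\g) \circ c^\sharp$ built from the unit/counit maps $\cO((\dG_E)^n/\dG_E) \to \cO_n \to E$ together with the monodromy of the local system $H_{n+1,\cO_n}|_{\D(u_0)}$ coming from Theorem~\ref{th:mult ss}. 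It then proves these operators satisfy Lafforgue's relations (Proposition~\ref{p:univ ex}), shows $\Max(R^{\univ})$ is identified with completely reducible $\dG(E)$-representations of $\pi_1$ (Proposition~\ref{p:ss rep}, via Richardson and Bate--Martin--R\"ohrle and Lafforgue), and finally reads off the maximal ideal cut out in $Z(\End(\cF))$.

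The gap in your general-characteristic argument is in the step ``one obtains a monoidal functor $\Rep(\dG_E) \to (\text{graded }\End(\cF)\text{-modules})$, ... Tannakian reconstruction produces an honest $\dG(E)$-local system.'' The Hecke functors $H_{V,x}$ produce new objects of the automorphic category, not $\End(\cF)$-module structures, and they do not act on $\cF$ by tensoring with anything unless $\cF$ is already a Hecke eigensheaf. To extract endomorphisms of $\cF$ from Hecke functors you need precisely the contraction $c^\flat \circ (\text{monodromy}) \circ c^\sharp$ — that is, the excursion operators. Moreover, even granting a map from some algebra to $\End(\cF)$, you still need to know that the relations among these operators exactly cut out $\Loc_{\dG}(X)$ at the level of closed points; this is the content of Lafforgue's theorem and the reductive-pair GIT results, which your proposal does not invoke. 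The ``reduce to characteristic zero'' opening also cannot work: there is no reduction from $\ov\FF_p$-coefficients to characteristic zero coefficients, and the paper never attempts one.

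Finally, a small imprecision: in positive characteristic the paper's output is a completely reducible (i.e.\ $\dG$-cr in Serre's sense) local system, which is what ``semisimple'' should mean here; your Tannakian sketch would at best produce something defined only up to conjugacy without an a priori complete-reducibility guarantee.
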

In the arithmetic Langlands program, the Langlands parameter of an automorphic representation of $G$ over a global field $F$ is a continuous homomorphism $\rho: \Gal(F^{s}/F)\to \dG(\overline{\QQ}_{\ell})$. In the geometric setting, the fundamental group of $X$ plays the role of $\Gal(F^{s}/F)$, and $\dG$-valued Galois representations $\rho$ are replaced by $\dG$-local systems on $X$. Therefore it makes sense to call the $\dG(E)$-local system $\rho_{\cF}$ constructed in Theorem \ref{cor: intro Langlands par} the (semisimple) {\em Betti Langlands parameter} of $\cF$.

\begin{remark} There is also a version of Theorem~\ref{thm: intro action} and Theorem~\ref{cor: intro Langlands par}  in the presence of level structures. See Section~\ref{s: variations}.
\end{remark}

%
%
%
%

\begin{remark} Theorem~\ref{cor: intro Langlands par} can be viewed as a categorical analogue of the main result of V.~Lafforgue from \cite{Laf}. Roughly speaking, when $X$ is defined over a finite field $k$, V.~Lafforgue constructs an action of the coordinate ring of the $\dG$-character variety of the absolute Galois group of $F=k(X)$ on the space of cuspidal automorphic forms on $G(F)\bs G(\AA_{F})$. This allows him to attach a semisimple Galois representation to each irreducible cuspidal automorphic representation of $G(\AA_{F})$. In our situation, the ring of regular functions on $\Loc_{\dG}(X)$ acts on each object of $\Sh_{\calN_G(X)}(\Bun_{G}(X), E)$, and we can attach a semisimple $\dG$-local system to each indecomposable automorphic complex.  V.~Lafforgue's construction relies crucially on the partial Frobenius structure of the moduli of Shtukas. In a vague sense, the nilpotent singular support condition we impose on complexes on $\Bun_{G}(X)$ is playing a similar role as the partial Frobenius:  in both cases they are ensuring certain local constancy of Hecke modifications. 
\end{remark}

\begin{remark}
For number theorists: the construction of the Betti Langlands parameters in Theorem~\ref{cor: intro Langlands par} uses an analogue of the $\bR\to \bT$ map (a ring homomorphism from a deformation ring of Galois representations to a Hecke ring). When $E$ is an algebraically closed field, we construct in  Corollary~\ref{cor: R to T} a diagram of rings
\begin{equation}\label{intro RT}
\xymatrix{  R=\upH^{0}(\Loc_{\dG}(X)_{E}) & R^{\univ}\ar[r]^-{\sigma}\ar[l]_-{\om} & \cZ(\Sh_{\calN_{G}(X)} (\Bun_{G}(X), E))
}
\end{equation} 
where the right end is the center (i.e., endomorphisms of the identity functor) of $\Sh_{\calN_{G}(X)} (\Bun_{G}(X), E)$, and the ring $R^{\univ}$ in the middle is defined by a universal property. Moreover, the map $\Spec R\to \Spec R^{\univ}$ induced by
$\om$ is a bijection on closed points. The construction of the diagram uses analogues of V.~Lafforgue's excursion operators in the Betti setting. For details, see Section~\ref{s:Betti ex}.

On the other hand, for topologists: for general $E$, the symmetric monoidal structure on $\Rep(\dG_{E})$ equips  the topological chiral homology $\int_{X}\Rep(\dG_{E})$ with a symmetric monoidal structure as well. Theorem ~\ref{thm: intro} and the formalism of topological chiral homology give an action diagram
\begin{equation*}
\xymatrix{    \Perf(\Loc_{\dG}(X)_{E})  \curvearrowleft  \int_{X}\Rep(\dG_{E})   \curvearrowright   \Sh_{\calN_{G}(X)} (\Bun_{G}(X), E). }
\end{equation*}
The monoidal unit $\mathds 1 \in \int_{X}\Rep(\dG_{E})$ acts on the module categories by the identity functor, so 
we obtain a diagram of derived rings 
\begin{equation}\label{intro RT2}
\xymatrix{  \cO(\Loc_{\dG}(X)_{E}) & \End(\mathds 1) \ar[r]\ar[l] & \tilde \cZ(\Sh_{\calN_{G}(X)} (\Bun_{G}(X), E)) }
\end{equation}
where the right end denotes  the derived center of  $\Sh_{\calN_{G}(X)} (\Bun_{G}(X), E))$. 
A natural problem is to study the relationship between $R^{\univ}$ and  $\End(\mathds 1)$,
and to compare~\eqref{intro RT} and \eqref{intro RT2}.
\end{remark}


\subsection{Acknowledgements} 
We thank David Ben-Zvi for many inspiring discussions about a Betti form of the  Geometric Langlands
correspondence. We thank an anonymous referee for generous comments.

DN is grateful for the support of NSF grant DMS-1502178.
ZY is  grateful for the support of NSF grant DMS-1302071/DMS-1736600 and the Packard Foundation.


\section{Notation}

\subsection{Automorphic side}\label{ss:auto notation}
All automorphic moduli stacks in this paper will be defined over $\CC$.

Let $G$ be a reductive group, $B\subset G$ a Borel subgroup, $N \subset B$ its unipotent radical,
and $T = B/N$ the universal Cartan. 
Let $\calB\simeq G/B$ be the flag variety of $G$.

Let $(\Lambda_T, R^{\vee}_+, \Lambda_T^\vee, R_+)$ be the associated based root datum, where 
$\Lambda_T =\Hom(\GG_m, T)$ is the coweight lattice,
$R^{\vee}_+\subset \Lambda_T$ the positive coroots, $\Lambda_T^\vee = \Hom(T, \GG_m)$ the weight lattice, and 
$R_+\subset \Lambda_T^\vee$ the positive roots. Let $\L^{+}_{T}$ (resp. $\L^{\vee,+}_{T}$) be the set of dominant coweights (resp. dominant weights).

Fix a commutative coefficient ring $E$ that is noetherian and of finite global dimension. We will work in the setting of $E$-linear dg categories. Most of our categories will comprise complexes of sheaves of $E$-modules over the classical topology of (automorphic) stacks over $\CC$, e.g., $\Bun_{G}(X)$. All sheaf-theoretic functors are understood to be derived functors.


%
%
%
%
%
%

\subsection{Spectral side}\label{ss:spec notation}
All stacks on the Langlands dual side are defined over $\ZZ$, and we often take their base change to $E$ by adding a subscript $E$. 

Form the dual based root datum
$(\Lambda_T^\vee, R_+, \Lambda_T, R^{\vee}_+)$, and
construct the Langlands dual group $\dG$ (over $\ZZ$), with Borel subgroup $B^\vee\subset \dG$,
unipotent radical $N^\vee\subset B^\vee$,
and dual universal Cartan $T^\vee = B^\vee/N^\vee$. 
Let $\calB^\vee\simeq \dG/B^\vee$ be the flag variety of $\dG$.

 Let $\calN^\vee$ be the nilpotent cone in the Lie algebra $\frg^{\vee}$.
We identify $\calN^\vee $ with the unipotent elements in $\dG$ via the exponential map.
 
Let $\mu:  \tilde \calN^\vee \to \calN^\vee$ be the Springer resolution. Recall that $\tilde \calN^\vee\subset \dG \times \calB^\vee$ classifies pairs $(g,\dB_{1})$
such that the class $g$ lies in the unipotent radical of $\dB_{1}$.  Note the isomorphism of adjoint quotients $N^\vee/B^\vee \simeq \tilde \calN^\vee/\dG$.



\section{Local constructions}\label{s: local}


\subsection{Automorphisms of disk}\label{ss: disk auts}

Set $\calO = \CC[[t]]$ to be the power series ring, with maximal ideal $\fm_\calO = t \CC[[t]]$, and fraction field $\calK = \CC((t))$.

Let $D = \Spec \calO$ be  the formal disk, and $D^\times  = \Spec \calK \subset D$ the formal punctured disk.

Let $\Aut^0(\calO) = \Spec \CC[c_1, c_1^{-1}, c_2, c_3,\ldots]$ be the group-scheme of automorphisms of $\calO$
that preserve the maximal ideal $\fm_\calO$. A point of $\Aut^0(\calO)$ with coordinate $(c_{1},c_{2},\cdots)$ corresponds to the automorphism $f(t)\mapsto f(c_{1}t+c_{2}t^{2}+\cdots)$ of $\cO$.
Let $\Aut(\calO) = \cup_{n \in \NN} \Spec \CC[c_0, c_1, c_1^{-1}, c_2, c_3,\ldots]/(c_0^n)$ be the group ind-scheme of automorphisms of $\calO$. Similarly, a point of $\Aut(\calO)$ with coordinate $(c_{0},c_{1},c_{2},\cdots)$, where $c_{0}$ is nilpotent, corresponds to the automorphism $f(t)\mapsto f(c_{0}+c_{1}t+c_{2}t^{2}+\cdots)$ of $\cO$.

We have
\begin{equation*}
\xymatrix{\Lie\Aut(\calO) = \Der(\calO) = \calO\partial_t
}
\end{equation*} 
and $\Aut(\calO)_\red = \Aut^0(\calO)$, so that $(\Der(\calO), \Aut^0(\calO))$ forms a Harish Chandra pair for $\Aut(\calO)$. 
  
Note as well 
\begin{equation*}
\xymatrix{\Der(\calO)^* \simeq   (\calK/\calO) \otimes_{\calO} \Omega_\calO  \simeq (\calK/\calO)dt
}
\end{equation*}
where $\Omega_\calO = \calO dt$. The $\CC$-linear pairing between $\Der(\cO)$ and $(\cK/\cO)dt$ is given by the residue pairing
  \begin{equation*}
  \xymatrix{
  \langle a(t)\partial_t, b(t)dt\rangle = \Res_{t=0} (a(t)b(t)).
  }
  \end{equation*}


\subsection{Affine Grassmannian}

Introduce the Laurent series loop group $G(\calK)= \Maps(D^\times, G)$, its parahoric arc subgroup $G(\calO) = \Maps(D, G)$,
and affine Grassmannian $ \Gr_G = G(\calK)/G(\calO)$. Recall that $\Gr_G$ classifies the data of a $G$-bundle $\calE$ over $D$
with a trivialization (equivalently, section) of the restriction $\calE|_{D^\times}$. (We will  be exclusively interested in constructible sheaves on $\Gr_G$, and hence ignore the non-reduced structure arising when $G$ is not semisimple.)

Note that $\Aut(\cO)$ naturally acts on $G(\cK)$ preserving the subgroup $G(\cO)$,
and hence also acts on the affine Grassmannian $\Gr_G$. 

\subsubsection{Stratification by $G(\cO)$-orbits}
The inclusion $ \Lambda_T = \Hom(\GG_m, T) \hookrightarrow G(\calK)$, $\lambda\mapsto t^\l$ induces a bijection of sets 
\begin{equation*}
\xymatrix{
\Lambda_T^+ \simeq \Lambda_T/W \ar[r]^-\sim & G(\cO)\backslash G(\calK)/G(\cO)
&
\lambda \ar@{|->}[r] & G(\calO)\cdot t^\lambda \cdot G(\cO)
}
\end{equation*}

For  each $\lambda\in \Lambda_T^+$, set
\begin{equation*}
\xymatrix{
G(\calK)^\l = G(\calO)\cdot t^\lambda \cdot G(\calO) \subset G(\calK)
&
\Gr_G^\lambda = G(\calO)\cdot t^\lambda \subset \Gr_G
}
\end{equation*}
and also  let 
$\ov{G(\cK)}^\lambda\subset G(\cK)$, $\ov\Gr_G^\lambda\subset \Gr_G$  denote their respective closures.

Recall that $G(\cK)^\mu \subset \ov{G(\cK)}^\lambda$ if and only if $\Gr_G^\mu \subset \ov\Gr_G^\lambda$ if and only if $\mu \leq \lambda$ (in the sense that
$\lambda-\mu$ is a $\ZZ_{\geq 0}$-combination of simple coroots).

It is well-known that $\ov {G(\cK)}^\lambda\subset G(\cK)$ is a scheme (not locally of finite type), hence $G(\cK)$ is an increasing union of schemes, and
$\ov\Gr_G^\lambda\subset \Gr_G$ is a (typically singular) projective variety,  hence 
$\Gr_G$ is an increasing union of projective varieties.

The subgroup $\Aut^{0}(\cO)$ preserves $\Gr_{G}^{\l}$ and $\ov\Gr_{G}^{\l}$ for each $\l\in\L_{T}^{+}$.


\subsubsection{Orbit closure resolutions}\label{s:Gr res}

It will  be technically useful to have on hand a resolution of  $\ov\Gr_G^\lambda\subset \Gr_G$.

Let us choose a $G(\cO)\rtimes \Aut^0(\cO)$-equivariant 
resolution of singularities 
\begin{equation*}
\xymatrix{
\nu^{\l}: \wt \Gr_G^\l \ar[r] & \ov\Gr_G^\l 
}
\end{equation*}
To achieve this via general theory, observe that the $G(\cO)\rtimes \Aut^0(\cO)$-action on $\ov \Gr_G^\l$ factors through a finite-type group, so such a resolution exists by \cite[Prop.~3.9.1]{Ko}.
Note this pulls back to  a $(G(\cO) \times G(\cO))\rtimes \Aut^0(\cO)$-equivariant  resolution
\begin{equation*}
\xymatrix{
\wt G(\cK)^\l \ar[r] & \ov{G(\cK)}^\l 
}
\end{equation*}


\begin{remark}
It suffices to choose   the resolutions $\wt \Gr_G^{\l_i} \to \ov \Gr_G^{\l_i}$
for a generating collection of coweights $\l_i \in \Lambda_T^+$,
and then in general take a  convolution space
\begin{equation*}
\xymatrix{
\wt \Gr_G^\l = \wt G(\cK)^{\l_1} \overset{G(\cO)}{\times} \wt G(\cK)^{\l_2} \overset{G(\cO)}{\times} \cdots 
\overset{G(\cO)}{\times} \wt \Gr_G^{\l_\ell}
&
\l  = \sum_{i=1}^\ell \l_i.
}
\end{equation*}
In particular, when $G$ has  a generating collection of minuscule coweights $m_i \in \Lambda_T^+$, so with smooth projective $G(\cO)$-orbits $\Gr_G^{m_i} = \ov\Gr_G^{m_i}$, we can simply take the convolution space
\begin{equation*}
\xymatrix{
\wt \Gr_G^\l =  G(\cK)^{m_1} \overset{G(\cO)}{\times}  G(\cK)^{m_2} \overset{G(\cO)}{\times} \cdots 
\overset{G(\cO)}{\times}  \Gr_G^{m_\ell}
&
\l  = \sum_{i=1}^\ell m_i.
}
\end{equation*}
\end{remark}


\subsection{Moment map}  The action of $\Aut(\cO)$ on $G(\cK)$ and $\Gr_{G}$ induces infinitesimal $\Der(\cO)$-actions, or equivalently moment maps
\begin{equation*}
\xymatrix{T^{*}G(\cK)\ar[r] & \Der(\cO)^{*},}
\end{equation*}
\begin{equation*}
\xymatrix{T^{*}\Gr_{G}\ar[r]& \Der(\cO)^{*}.}
\end{equation*}
We record here formulae for these moment maps.

\subsubsection{Loop group}
The infinitesimal $\Der(\calO)$-action on $G(\cK)$ is given
at a point $g(t)\in G(\cK)$ by the formula 
 \begin{equation}\label{eq: inf formula}
 \xymatrix{
\calO\partial_t =  \Der(\calO) \ar[r]  & T_{g(t)} G(\cK) \simeq  \frg(\cK)
& 
f(t)\partial_{t}\ar@{|->}[r] &  f(t)g'(t)g(t)^{-1}. 
}
\end{equation}
Here the symbol $g'(t)g(t)^{-1}$  denotes an element of $\frg(\cK)$ defined in the following way. For each $\cK$-linear $G(\cK)$-representation $(V,\rho)$, with associated $\frg(\cK)$-module $(V, d\rho)$, we obtain  an endomorphism $\rho(g(t))' \in \End_\cK(V)$ by differentiating the matrix coefficients of $\rho(g(t))$. Set $Y_{V}=\rho(g(t))'\rho(g(t))^{-1}\in\End_{\cK}(V)$. For two such representations $(V_{1},\rho_{1})$, $(V_{2}, \rho_{2})$, it is easy to check that $Y_{V_{1}\otimes V_{2}}=Y_{V_{1}}\otimes\id_{V_{2}}+\id_{V_{1}}\otimes Y_{V_{2}}$. Therefore by Tannakian formalism, there is a well-defined element $Y\in \frg(\cK)$ such that $d\rho(Y)=Y_{V}$, for all such representations $(V,\rho)$. We use the notation  $g(t)'g(t)^{-1}$ to denote this element $Y$.

To reformulate \eqref{eq: inf formula} as a moment map,  
 consider the   $G(\calK)$-equivariant isomorphism
\begin{equation}\label{eq:gK dual}
\xymatrix{\frg(\cK)^* \simeq \frg^{*}(\cK) \otimes_\cO \Omega_\cO = \frg^{*}(\cK)dt
}
\end{equation}
given by the pairing
  \begin{equation*}
  \xymatrix{
  (v(t), w(t)dt) = \Res_{t=0} \langle v(t), w(t)dt\rangle, &   v(t)\in \frg(\cK), w(t)\in \frg^{*}(\cK).
  }
  \end{equation*}
Then the moment map for the induced $\Aut(\calO)$-action on  the cotangent bundle $T^*G(\cK)$ is given at a point $g(t)\in G(\cK)$ by the  formula 
 \begin{equation*}
 \xymatrix{
  \frg^{*}(\cK)dt  \simeq \frg(\cK)^*  \simeq T^*_{g(t)} G(\cK) \ar[r] & (\Lie \Aut(\cO))^* \simeq \Der(\calO)^* 
}
\end{equation*}
\begin{equation}\label{eq: moment map}
\xymatrix{
\phi(t)dt \ar@{|->}[r] &  ( f(t)\partial_t \ar@{|->}[r] & \Res_{t=0}\langle f(t)g'(t) g(t)^{-1}, \phi(t)dt\rangle )
}
\end{equation}
Under the further isomorphism $\Der(\cO)^*\simeq (\cK/\cO)dt$ given by the residue pairing, this takes the form
 \begin{equation*}
\xymatrix{
\phi(t)dt \ar@{|->}[r] &   \langle g'(t) g(t)^{-1}, \phi(t)dt\rangle\in (\cK/\cO) dt  
}
\end{equation*}

\subsubsection{Affine Grassmannian}
We describe the cotangent bundle $T^*\Gr_{G}$ and give a similar formula for the moment map of the induced $\Aut(\cO)$-action. 

Let $g(t)G(\cO)\in \Gr_{G}$. Since the $G(\cK)$-action on $\Gr_{G}$ is transitive, the tangent space $T_{g(t)G(\cO)}\Gr_{G}$ is naturally isomorphic to the quotient
\begin{equation*}
\xymatrix{T_{g(t) G(\cO)} \Gr_G \simeq  \frg(\cK)/ \Ad_{g(t)}\frg(\cO)
}
\end{equation*}
Taking  duals and using $\frg(\cO)^\perp \simeq \frg^*(\cO) dt$ under the adjoint-equivariant identification \eqref{eq:gK dual}, we get a canonical isomorphism
\begin{equation}\label{eq:cot Gr}
\xymatrix{T^{*}_{g(t)G(\cO)}\Gr_{G}\simeq \Ad_{g(t)}(\frg^{*}(\cO)dt)
}
\end{equation}
For the induced $\Aut(\cO)$-action on $T^*\Gr_{G}$, 
the moment map 
\begin{equation*}
\xymatrix{
T^{*}_{g(t)G(\cO)}\Gr_{G}\simeq \Ad_{g(t)} (\frg^{*}(\cO)dt)\ar[r] &\Der(\cO)^{*} 
}
\end{equation*}
is given by the restriction of \eqref{eq: moment map}.

\begin{remark}
Recall that the $\Aut^0(\cO)$-action on $\Gr_G$  preserves the $G(\cO)$-orbits $\Gr_G^\l \subset \Gr_G$, for $\lambda\in \Lambda_T^+$. But from formula ~\eqref{eq: moment map}, one can see   the $\Aut(\cO)$-action 
does not preserve each  $\Gr_G^\l \subset \Gr_G$ as 
the action of $\partial_t \in \Der(\cO)$ is not tangent to $\Gr_G^\l \subset \Gr_G$.
\end{remark}


\subsection{Satake category}

Let $\Sat_G = \Sh_c(G(\cO)\backslash \Gr_G, E)$ be the dg category of $ G(\cO)$-equivariant constructible complexes of $E$-modules on $\Gr_G$
with compact support.
 Convolution equips $\Sat_G$ with a monoidal structure with monoidal unit the skyscraper sheaf at the base-point
 $\Gr^0_G \subset \Gr_G$.

Recall by the geometric Satake correspondence with ring coefficients~\cite[(1.1)]{MV}, the convolution product on $\Sat_G$ preserves the perverse heart $\Sat^\heartsuit_G = \Perv_{c}(G(\cO)\backslash \Gr_G, E)$; the induced monoidal structure on $\Sat^\heartsuit_G$ 
extends to a tensor structure; and there is a natural
tensor equivalence
 \begin{equation*}
\xymatrix{
\Sat^\heartsuit_G \simeq \Rep(\dG_{E})
}
\end{equation*}
with the tensor category of representations of the dual group $\dG_{E}$ on finitely generated $E$-modules. 
Note the $G(\cO)$-equivariance of any object of 
$\Sat^\heartsuit_G $ is a property not an additional structure, and in particular, equivalent to its constructibility along the $G(\cO)$-orbits.

\subsubsection{Equivariance for disk automorphisms}\label{sss:Sat0}

Let $\Sat^0_G = \Sh_c((\Aut^0(\cO) \ltimes G(\cO))\backslash \Gr_G, E)$ be the dg category of $ \Aut^0(\cO) \ltimes G(\cO)$-equivariant constructible complexes on $\Gr_G$
with compact support.
We can equivalently view $\Sat^0_G$ as the dg category of $\Aut^0(\cO)$-invariants in $\Sat_G$.
From this perspective, the $\Aut^0(\cO)$-action naturally factors through the evaluation $\Aut^0(\cO)\to \GG_m$.

Convolution equips $\Sat^0_G$ with a monoidal structure with monoidal unit the skyscraper sheaf at the base-point
 $\Gr^0_G \subset \Gr_G$ with its natural $\Aut^0(\cO)$-equivariance.
The  forgetful functor $\Sat^0_G \to \Sat_G$ is monoidal and restricts to an equivalence of perverse hearts $\Sat^{0, \heartsuit}_G \stackrel{\sim}{\to} \Sat_G^\heartsuit$ (\cite[Prop. A.1]{MV}).

\begin{ex}
For each $\l\in \Lambda^+_T$, the constant sheaf  $\un E_{\Gr^\l_G}$ on the $G(\cO)$-orbit $\Gr^\l_G \subset \Gr_G$ (extended by zero to $\Gr_{G}$), 
the constant sheaf $ \un E_{\ov \Gr^\l_G}$ 
and intersection complex $\IC^\l$ (with $ E$-coefficients)
on the closure  $\ov \Gr^\l_G \subset \Gr_G$, and the pushforward 
of the constant sheaf $\un E_{\wt \Gr^\l_G}$ on the resolution $\nu^{\l}: \wt \Gr^\l_G \to \ov \Gr_G^\l$,  are all canonically $\Aut^0(\cO)\ltimes G(\cO)$-equivariant.
\end{ex}

The following is  easy to observe by induction on the poset $\Lambda_T^+$, and the observation that the stabilizer
 within $\Aut^0(\cO)\ltimes G(\cO)$
of the point $t^\l G(\cO)\in\Gr_G$, for $\l\in \Lambda_T^+$, is connected.

\begin{lemma}\label{l:gen Sat} Every object of $\Sat_G$, respectively   $\Sat^0_G$, is isomorphic to a finite complex of objects from each of the following collections
 \begin{equation*}
\xymatrix{
\{\un E_{\Gr^\l_G}\}_{\l \in \Lambda^+_T}
&
\{\un E_{\ov \Gr^\l_G}\}_{\l \in \Lambda^+_T}
&
\{\IC^\l\}_{\l \in \Lambda^+_T}
&
\{\nu^{\l}_{!}\un E_{\wt \Gr^\l_G}\}_{\l \in \Lambda^+_T}
}
\end{equation*}
\end{lemma}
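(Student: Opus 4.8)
The plan is to prove Lemma~\ref{l:gen Sat} by induction on the poset $\L_T^+$, using the standard recollement (open-closed) exact triangles on $\ov\Gr_G^\l$ together with the dévissage by $G(\cO)$-orbits that underlies constructibility in $\Sat_G$.

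\textbf{Setup and reduction.} Fix $\l\in\L_T^+$ and let $\cF$ be an object of $\Sat_G$ (resp.\ $\Sat_G^0$) supported on $\ov\Gr_G^\l$. Since $\cF$ is $G(\cO)$-equivariant and constructible, it is lisse along each orbit $\Gr_G^\mu$ with $\mu\le\l$; as each orbit is a homogeneous space for a connected group, $\cF|_{\Gr_G^\mu}$ is a \emph{finite complex} of shifted constant sheaves $\un E_{\Gr_G^\mu}[n]$ — here one uses that the stabilizer of $t^\mu G(\cO)$ in $G(\cO)$ (resp.\ in $\Aut^0(\cO)\ltimes G(\cO)$) is connected, so an equivariant lisse complex is just a complex of $E$-modules with trivial action, i.e.\ a finite complex of $\un E_{\Gr_G^\mu}[*]$'s. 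First I would treat an arbitrary object as built, via the finitely many recollement triangles for the stratification by orbits $\mu\le\l$, out of the pushforwards $j_{\mu!}(\cF|_{\Gr_G^\mu})$ where $j_\mu:\Gr_G^\mu\hookrightarrow\Gr_G$; so it suffices to show each $j_{\mu!}\un E_{\Gr_G^\mu}[n]$ lies in the triangulated (thick, finite-complex) span of each of the four collections. By shifting, it is enough to handle $j_{\mu!}\un E_{\Gr_G^\mu}$, which is a summand-free iterate of the single object $\un E_{\Gr_G^\mu}$ only up to the triangles relating $j_!$ to the constant sheaf on the (singular) closure; so really the crux is to pass between the four generating sets.

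\textbf{Passing between the four collections.} For the collection $\{\un E_{\Gr_G^\mu}\}$: by induction on $\mu$, the open-closed triangle $j_{\mu!}\un E_{\Gr_G^\mu}\to \un E_{\ov\Gr_G^\mu}\to i_*\un E_{\partial\ov\Gr_G^\mu}$ (where $\partial\ov\Gr_G^\mu=\ov\Gr_G^\mu\setminus\Gr_G^\mu$ is a union of smaller closures) expresses $\un E_{\Gr_G^\mu}$ (extended by zero) as a two-step extension of $\un E_{\ov\Gr_G^\mu}$ and objects supported on strictly smaller closures — handled by induction — and conversely expresses $\un E_{\ov\Gr_G^\mu}$ in terms of the $j_{\nu!}\un E_{\Gr_G^\nu}$ for $\nu\le\mu$ via the (finite) filtration of $\un E_{\ov\Gr_G^\mu}$ by the strata. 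This gives the equivalence of the first two collections. For $\{\IC^\mu\}$: the IC sheaf has a finite filtration (by perverse or by stratum-truncation) whose graded pieces are shifted $j_{\nu!}\un E_{\Gr_G^\nu}[*]$, $\nu\le\mu$, with $\IC^\mu$ itself contributing the top piece $j_{\mu!}\un E_{\Gr_G^\mu}[\dim]$; inducting downward, $\IC^\mu$ and $\{j_{\nu!}\un E_{\Gr_G^\nu}\}_{\nu\le\mu}$ span the same thick subcategory. For $\{\nu^\l_!\un E_{\wt\Gr_G^\l}\}$: since $\nu^\l$ is a $G(\cO)\rtimes\Aut^0(\cO)$-equivariant resolution that is a fibration over each $\Gr_G^\mu$ (as noted in Section~\ref{s:Gr res}), proper base change shows $\nu^\l_!\un E_{\wt\Gr_G^\l}$ is constructible with lisse restriction to each $\Gr_G^\mu$ ($\mu\le\l$), whose stalk is $\upH^*$ of the (connected, projective) fiber — a finite complex of $E$-modules — so $\nu^\l_!\un E_{\wt\Gr_G^\l}$ again has a finite filtration with graded pieces shifted $j_{\mu!}\un E_{\Gr_G^\mu}[*]$; and conversely, because the top stratum $\Gr_G^\l$ has trivial fiber ($\nu^\l$ is an iso there), the ``leading term'' of $\nu^\l_!\un E_{\wt\Gr_G^\l}$ is $j_{\l!}\un E_{\Gr_G^\l}$, allowing a downward induction to recover all $j_{\mu!}\un E_{\Gr_G^\mu}$, $\mu\le\l$.

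\textbf{Assembling.} Combining: each of the four collections generates (as a thick subcategory closed under finite complexes/extensions) the same subcategory of $\Sat_G$, namely the one containing all $j_{\mu!}\un E_{\Gr_G^\mu}[*]$, and by the reduction above every object of $\Sat_G$ (resp.\ $\Sat_G^0$) is a finite complex of such objects. The $\Sat_G^0$ case is identical, reading ``equivariant'' throughout as $\Aut^0(\cO)\ltimes G(\cO)$-equivariant and invoking the connectedness of the larger stabilizer; all the objects named in the Example are canonically $\Aut^0(\cO)\ltimes G(\cO)$-equivariant, so the same triangles live in $\Sat_G^0$.

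\textbf{Main obstacle.} The delicate point is the claim that an equivariant constructible complex, when restricted to a single orbit, is quasi-isomorphic to an honest \emph{finite complex} of shifted constant sheaves — i.e.\ that there is no nontrivial ``$A_\infty$ / nonformality'' obstruction gluing the strata. This is exactly where the connectedness of the stabilizers is used (so the equivariant derived category of a point for that stabilizer is just $E$-modules with no higher structure), but one must also be careful that the gluing across \emph{different} strata, encoded in the finitely many recollement triangles, only ever produces finite iterated extensions — which holds because $\ov\Gr_G^\l$ has finitely many orbits and each piece is bounded. Making this ``finiteness of the dévissage'' precise, uniformly in $\l$, is the step that needs the most care; everything else is a standard stratified-descent argument.
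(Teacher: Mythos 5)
Your argument is correct and is exactly the argument the paper intends: the paper offers no written proof beyond the remark that the lemma follows ``by induction on the poset $\Lambda_T^+$'' together with the connectedness of the stabilizer of $t^\l G(\cO)$ in $\Aut^0(\cO)\ltimes G(\cO)$, and your recollement d\'evissage by $G(\cO)$-orbits, plus the four standard passages between the generating collections (open-closed triangles for $\un E_{\Gr^\l_G}$ versus $\un E_{\ov\Gr^\l_G}$, stratum filtration for $\IC^\l$, and proper base change plus birationality over the open orbit for $\nu^\l_!\un E_{\wt\Gr^\l_G}$), is precisely the intended fleshing-out. No discrepancy with the paper's approach.
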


\begin{proof}
For the first three collections, the assertion is standard and  the proof is left to the reader. 

Let $\Sat_G^{\le \l}$ (resp. $\Sat^{<\l}_{G}$) be the full dg-subcategory of $\Sat_{G}$ consisting of objects supported on $\ov\Gr^{\l}_{G}$ (resp. supported on $\ov\Gr^{\l}_{G}\bs \Gr^{\l}_{G}$). We show by induction on $\l\in\L_{T}^{+}$ that every object in $\Sat^{\le\l}_G$ is a finite complex of $\{\nu^{\mu}_{!}\un E_{\wt \Gr^\mu_G}\}_{\mu\le \l}$. This is trivial for $\l=0$. Suppose this  is true for all $\l'<\l$.  Clearly, every object in $\Sat_G^{\le \l}$ is a finite complex of $\un{E}_{\Gr^{\l}_{G}}$ and objects in $\Sat^{<\l}_{G}$. Recall that $\nu^{\l}: \wt \Gr^\l_G \to \ov \Gr_G^\l$ is a resolution, so in particular an isomorphism over the open dense locus $\Gr^\l_G \subset \ov \Gr^\l_G$. Thus the cone of the natural map $\un E_{\Gr^\l_G} \to \nu^{\l}_{!}\un E_{\wt \Gr^\l_G}$ is supported on 
$ \ov \Gr^\l_G \setminus \Gr^\l_G$, hence an object in $\Sat^{<\l}_{G}$. Therefore  every object in $\Sat_G^{\le \l}$ is a finite complex of $\nu^{\l}_{!}\un E_{\wt \Gr^\l_G}$ and $\Sat^{<\l}_{G}$. Finally, since $\ov\Gr^{\l}_{G}\bs \Gr_{G}^{\l}=\cup_{\mu<\l}\ov\Gr^{\mu}_{G}$, by inductive hypothesis, every object in $\Sat^{<\l}_{G}$ is a finite complex of $\{\nu^{\mu}_{!}\un E_{\wt \Gr^\mu_G}\}_{\mu<\l}$. Therefore, every object in $\Sat^{\le\l}_G$ is a finite complex of $\{\nu^{\mu}_{!}\un E_{\wt \Gr^\mu_G}\}_{\mu\le \l}$.


The argument  in the case of $\Sat^{0}_{G}$ is entirely the same.
\end{proof}

%

%
%


\section{Constructions over a curve}

Let $X$ be a connected smooth projective curve over $\CC$.


 \subsection{Local coordinates}
For a $\CC$-algebra $R$ and  an $R$-point $x\in X(R)$ with graph $\Gamma_{x}\subset X_{R}=X\times_{\CC}\Spec R$, 
let $\wh\cO_{x}$ be the completion of $X_{R}$ along the ideal $\cI_{x}$ defining the graph $\Gamma_{x}$. Let $\wh{\cI}_{x}=\cI_{x}\wh\cO_{x}$. Let $D_{x}=\Spec \wh\cO_{x}$ be the formal disc around $\Gamma_{x}$, and $D^\times_x = D_{x}\setminus\Gamma_{x}$ be the formal punctured disk.
  
Consider the presheaf $\Coord^0(X)^{pre}$ on affine $\CC$-algebras whose value at a $\CC$-algebra $R$ is the set of pairs $(x,t_{x})$ where $x\in X(R)$ and $t_{x}\in \wh{\cI}_{x}$ that induces an $R$-linear isomorphism $\ph_{t_{x}}: R[[t]]\isom\wh{\cO}_{x}$ sending $t$ to $t_{x}$.  The sheafification of $\Coord^0(X)^{pre}$ is representable by a right $\Aut^0(\calO)$-torsor $\Coord^0(X)\to X$. 

\subsubsection{The $\Aut(\cO)$-action}\label{sss:AutO} The $\Aut^0(\calO)$-action on $\Coord^0(X)$ extends to an
$\Aut(\calO)$-action: for an automorphism $\a$ of $R[[t]]$ and $(x,t_{x})\in \Coord^0(X)(R)$, $(x,t_{x})\cdot\a=(x', t_{x'})$ where $x'$ is the $R$-point of $X$ defined by the ring homomorphism
\begin{equation*}
\xymatrix{     \wh{\cO}_{x}\ar[r]^{\a^{-1}\circ\ph_{t_{x}}^{-1}} & R[[t]]\ar[r]^{t\mapsto 0} & R,
}
\end{equation*}
and $t_{x'}=\ph_{t_{x}}(\a(t))$. The induced infinitesimal $\Der(\calO)$-action is simply transitive, i.e., the moment map $T^{*}\Coord^{0}(X)\to \Der(\cO)^{*}$ restricts to an isomorphism on each cotangent space
\begin{equation}\label{eq:cot coord}
\xymatrix{ T^{*}_{(x,t_{x})}\Coord^{0}(X)\ar[r]^-{\sim} & \Der(\cO)^{*}, & \textup{ for }(x,t_{x})\in \Coord^{0}(X).
}
\end{equation}

\subsection{Uniformization}\label{sss:unif}

We will have use for an infinitesimal formula for the Grassmannian uniformization of $\Bun_G(X)$.
Consider the natural map
\begin{equation*}
\xymatrix{
u:
\Gr_G \times \Coord^0(X) \ar[r] & \ar[r] \Gr_{G, X} \ar[r] & \Bun_G(X).
}
\end{equation*}
To a $\CC$-algebra $R$, a point $g(t)G(\cO\wh{\otimes}R) \in \Gr_G(R)$, and a point $(x, t_x) \in \Coord^0(X)(R)$ ($t_x$ is a formal coordinate along the graph $\Gamma_x$ of $x$), it assigns the $G$-bundle $\calE = u(g(t)G(\cO), x, t_x)$ on $X_R$ given by glueing the trivial bundles over $D_{x}$ and $X_R\setminus \Gamma_x$ by the transition matrix $g(t_x)$. By construction, $\cE$ is equipped with a trivialization $\tau_{X\setminus x}: \cE|_{X_R\setminus \Gamma_x}\simeq G\times (X_R\setminus \Gamma_x)$.

\begin{lemma}\label{l:Aut inv} The  map $u$ is invariant under the anti-diagonal action of $\Aut(\cO)$ on $\Gr_{G}\times\Coord^{0}(X)$. 
\end{lemma}

\begin{proof} Let $R$ be a $\CC$-algebra. For a point $(g, x,t_{x})\in G(R((t)))\times \Coord^{0}(X)(R)$, and an automorphism $\a$ of $R[[t]]$, we have $\a\cdot g\in G(R((t)))$ is given by the composition $\Spec R((t))\xr{\a_{*}} \Spec R((t))\xr{g} G$, and $(x,t_{x})\cdot \a=(x', t_{x'})$ is described in \S\ref{sss:AutO}. Note that  $D_{x}=D_{x'}$, $D_{x}^{\times}=D_{x'
}^{\times}$ and $X_{R}\setminus\Gamma_{x}=X_{R}\setminus\Gamma_{x'}$. The $G$-bundle $u(\a^{-1}\cdot g, (x,t_{x})\cdot\a)$ on $X_{R}$ is obtained by gluing the trivial bundles over $D_{x'}=D_{x}$ and $X_R\setminus \Gamma_{x'}=X_{R}\setminus\Gamma_{x}$ by the following transition matrix
\begin{equation*}
\xymatrix{ D_{x'}=D_{x}\ar[rr]^-{(\ph_{t_{x}}\circ \a)_{*} } && \Spec R((t)) \ar[rr]^-{\a^{-1}\cdot g}  && G.
}
\end{equation*}
Direct calculation shows that the two appearances of $\a$ cancel out, and the composition above is the same as $g(\ph_{t_{x}}(t))=g(t_{x}):D_{x}\to G$, which is the same transition matrix defining the $G$-bundle $u(g(t), x,t_{x})$. This proves that $u$ is invariant under $\Aut(\cO)$.
%
%
\end{proof}



\subsection{Higgs fields}

Let $\TT^*\Bun_G(X)$ be the total space of the cotangent complex of $\Bun_G(X)$. 
It is a derived algebraic stack locally of finite type.
Its fiber at $\calE\in \Bun_G(X)$ is given by the complex of derived sections
\begin{equation*}
\xymatrix{
\TT^*_\calE \Bun_G(X) \simeq \Gamma(X, \frg^*_\calE \otimes\om_{X})
}
\end{equation*}
where $\frg^*_\calE = \frg^*\times_G \calE$ denotes the coadjoint bundle of $\calE$, and $\om_X$ the canonical bundle of $X$.

We will  be exclusively interested in the traditional cotangent bundle $T^*\Bun_G(X)$ given by the underlying classical stack
of  $\TT^*\Bun_G(X)$. It is an algebraic stack locally of finite type.
Its fiber at $\calE\in \Bun_G(X)$ is given by the space of Higgs fields
\begin{equation*}
\xymatrix{
T^*_\calE \Bun_G(X) \simeq H^0(X, \frg^*_\calE \otimes\om_{X}).
}
\end{equation*}


\subsubsection{Global nilpotent cone}

Consider the characteristic polynomial map 
\begin{equation*}
\xymatrix{
\chi:\frg^* = \Spec \Sym (\frg) \ar[r] & \Spec \Sym (\frg)^G \simeq \Spec \Sym (\frh)^W =: \frc
}
\end{equation*}
and recall it is  $G \times \GG_m$-equivariant where $G$ acts trivially on $\frc$.

Let $ A_G(X) := H^0(X, \frc_{\omega_X})$ be the Hitchin base, 
where $\frc_{\omega_X} = \dot\omega_X\twtimes{\GG_m} \frc$ denotes the associated bundle,
where we write $\dot\om_{X}$ for the  $\Gm$-torsor associated with the line bundle $\om_{X}$.

Introduce the Hitchin map
\begin{equation*}
\xymatrix{
\Hitch:T^* \Bun_G(X) \ar[r] & A_G(X) 
&
\Hitch(\calE, \phi) = \chi(\phi).
}
\end{equation*}

The global nilpotent cone is the inverse image of the trivial point
\begin{equation*}
\xymatrix{
\calN_G(X) = \Hitch^{-1}(0) \subset T^* \Bun_G(X).
}
\end{equation*}
It is a Lagrangian substack by \cite{L,Gin} in the sense that for a  smooth map $u:U\to \Bun_G(X)$ from a scheme $U$,
with induced correspondence
\begin{equation*}
\xymatrix{
T^*U & \ar[l]_-{du}  T^*\Bun_G(X) \times_{ \Bun_G(X)} U \ar[r]^-{u_{\nat}} & T^*\Bun_G(X),
}
\end{equation*}
 we obtain a Lagrangian subvariety 
\begin{equation*}
\xymatrix{
du(u_{\nat}^{-1}(\calN_G(X))) \subset T^*U.
}
\end{equation*}
For $\calE\in \Bun_G(X)$, the fiber of $\calN_G(X)$ over $\cE$ is given by the space of  nilpotent Higgs fields
\begin{equation*}
\xymatrix{
\phi:X\ar[r] &  \calN_{\calE \times \om_{X}}
}
\end{equation*}
 where $\calN\subset \frg^*$ denotes the traditional nilpotent cone with the coadjoint action by $G$ and dilation by $\Gm$,
and $ \calN_{\calE \times \om_{X}} = (\calE \times_{X} \dot\omega_X) \twtimes{G \times \GG_m} \cN$ denotes the associated bundle, where as above we write $\dot\om_{X}$ for the  $\Gm$-torsor associated with the line bundle $\om_{X}$.

Note since
$\calN\subset  \frg^*$ is closed, a Higgs field $\phi$ is nilpotent if and only if it is generically nilpotent.

\subsubsection{The differential of the uniformization}\label{sss:du}

For a $\CC$-point $(g(t)G(\cO), x,t_x)\in \Gr_G\times\Coord^0(X)$ with image $\cE\in \Bun_G(X)$ under $u$,  the differential $du$ induces a map on cotangent spaces
\begin{equation*}
\xymatrix{
du :  T^*_\calE\Bun_G(X) \ar[r] & T_{g(t)G(\cO)}^*\Gr_G \times T^*_{(x, t_x)}\Coord^0(X) 
}
\end{equation*}
which under our identifications \eqref{eq:cot coord} and \eqref{eq:cot Gr} is more concretely a map
\begin{equation*}
\xymatrix{
H^0(X, \frg^*_{\calE}\otimes\om_X)  \ar[r] &  \Ad_{g(t)}\frg^{*}(\cO) dt \times \Der(\cO)^*.
}
\end{equation*}
Here the first component is the composition of the trivialization $\tau_{X\setminus x}$, restriction to $D^{\times}_{x}$ and the change of variable $t_{x}\mapsto t$:
\begin{equation}\label{rest Dx}
\xymatrix{H^0(X, \frg^*_{\calE}\otimes\om_X)\ar[r] & H^{0}(X\setminus x, \frg^{*}\otimes\om_{X})\ar[r] & \frg^{*}(\cK_{x})dt_{x}\ar[r]^{t_{x}\mapsto t}_{\sim} & \frg^{*}(\cK)dt
}
\end{equation}
and its image lies in $\Ad_{g(t)}\frg^{*}(\cO) dt$. We denote the composition of the maps in \eqref{rest Dx} by $\phi\mapsto \phi|_{D^{\times}}$.
 
\begin{prop}\label{p:diff Gr BunG}
The second component of $du$ viewed as a bilinear pairing
\begin{equation*}
\xymatrix{
\Der(\cO) \times H^0(X, \frg^*_{\calE}\otimes\om_X)  \ar[r] &   \CC
}
\end{equation*}
 takes the form
\begin{equation}\label{dr}
\xymatrix{
(f(t)\partial_{t}, \phi)\ar@{|->}[r] &  \Res_{t=0}(\jiao{f(t)g'(t)g(t)^{-1},\phi|_{D^\times}})
}
\end{equation}
where $\phi|_{D^\times}$ is defined as in \eqref{rest Dx}.
\end{prop}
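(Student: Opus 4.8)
The plan is to compute $du$ directly from its definition as the transpose of the derivative of the uniformization map $u$, and to identify the second component with the moment map computation already carried out for $G(\calK)$ in~\eqref{eq: moment map}. First I would fix a point $g(t)G(\cO)\in\Gr_G$ and a coordinate $(x,t_x)\in\Coord^0(X)$, set $\calE = r(g(t)G(\cO),x,t_x)$, and recall that $\Der(\cO)$ acts on $\Coord^0(X)$ simply transitively, so moving $(x,t_x)$ in a $\Der(\cO)$-direction $f(t)\partial_t$ is the same, after the change of variable $t_x\mapsto t$, as acting on the gluing element $g(t)\in G(\cK)$ by the infinitesimal automorphism $f(t)\partial_t$ of $\cO$. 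The key point is that this is precisely the setup of~\eqref{eq: inf formula}: the induced tangent vector to $G(\cK)$ (hence to $\Gr_G$, hence to $\Bun_G(X)$ through $u$) at $g(t)$ is represented by $f(t)g'(t)g(t)^{-1}\in\frg(\cK)$.

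Next I would trace through the duality. A Higgs field $\phi\in H^0(X,\frg^*_\calE\otimes\om_X)$ pairs against a tangent vector to $\Bun_G(X)$ at $\calE$ by the standard Serre-duality pairing $T_\calE\Bun_G(X)\simeq H^1(X,\frg_\calE)$ against $H^0(X,\frg^*_\calE\otimes\om_X)$, which is computed via a local residue at any point where one has a trivialization; here that point is $x$, and the trivialization is $\tau_{X\setminus x}$ restricted to $D^\times_x$. So the pairing of $\phi$ with the image under $du\circ(\text{infinitesimal }\Der(\cO)\text{-action})$ of $f(t)\partial_t$ is, by the compatibility of the uniformization gluing with Čech/residue descriptions of $H^1$, equal to $\Res_{t=0}\langle f(t)g'(t)g(t)^{-1},\,\phi|_{D^\times}\rangle$, where $\phi|_{D^\times}$ is the element of $\frg^*(\cK)dt$ obtained by~\eqref{rest Dx}. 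This is exactly~\eqref{dr}. One has to check that the tangent vector $f(t)g'(t)g(t)^{-1}\in\frg(\cK)$ represents the correct class modulo $\Ad_{g(t)}\frg(\cO)$ (the ambiguity in the uniformization description of $T_\calE\Bun_G(X)$), which is harmless because $\phi|_{D^\times}$ lies in $\Ad_{g(t)}\frg^*(\cO)dt$ by~\eqref{rest Dx} and pairs trivially with $\Ad_{g(t)}\frg(\cO)$ under the residue pairing~\eqref{eq:gK dual}.

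Concretely the steps are: (i) identify the $\Der(\cO)$-variation of $(x,t_x)$ with the infinitesimal $\Aut(\cO)$-action on $G(\cK)$ at $g(t)$ via $t_x\mapsto t$, using~\eqref{eq: inf formula}; (ii) observe that $u$ factors as $\Gr_G\times\Coord^0(X)\to\Gr_{G,X}\to\Bun_G(X)$, so the composite derivative is computed by the $G(\cK)$-formula followed by restriction through $\tau_{X\setminus x}$, i.e.\ the map $\phi\mapsto\phi|_{D^\times}$ of~\eqref{rest Dx}; (iii) match the Serre-duality pairing on $\Bun_G(X)$ with the residue pairing $\langle-,-\rangle$ appearing in~\eqref{eq: moment map}, which is legitimate because the residue at a single point $x$ computes the Serre pairing when one works with the $X\setminus x$-trivialization; (iv) assemble (i)--(iii) to get~\eqref{dr}. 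The main obstacle is (iii), the bookkeeping of signs and of the precise normalization of Serre duality vs.\ the residue pairing on $\frg(\cK)^*\simeq\frg^*(\cK)dt$: one must be careful that the identification $\TT^*_\calE\Bun_G(X)\simeq\Gamma(X,\frg^*_\calE\otimes\om_X)$ is compatible with~\eqref{eq:gK dual} and~\eqref{eq:cot Gr}, so that the single residue $\Res_{t=0}$ (rather than a sum of residues over $X$, the others vanishing since $\phi$ is regular away from $x$) gives the pairing with the right sign. Everything else is a direct unwinding of the definitions already set up in Sections~\ref{ss: disk auts} and~\ref{sss:Sat kernel}.
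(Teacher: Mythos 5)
Your proposal is correct and is essentially the paper's argument in transposed form: the paper uses Lemma~\ref{l:Aut inv} to place the image of $du$ in the kernel of the anti-diagonal moment map $\mu_1\times(-\mu_2)$ and then reads off the second component as $\mu_1(\phi|_{D^\times})$ via~\eqref{eq: moment map}, while you run the same invariance on the tangent side, identifying the coordinate variation with the infinitesimal action $f(t)g'(t)g(t)^{-1}$ of~\eqref{eq: inf formula} and pairing against $\phi$ by the residue. Your observations that the ambiguity modulo $\Ad_{g(t)}\frg(\cO)$ is killed by $\phi|_{D^\times}\in\Ad_{g(t)}\frg^*(\cO)dt$ and that the only real care is the normalization of the first component (which the paper simply asserts before the proposition) are both apt.
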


\begin{proof}
By Lemma \ref{l:Aut inv}, the map $u$ factors through the anti-diagonal $\Aut(\cO)$-action on $\Gr_{G} \times \Coord^0(X)$.
Thus the  image of the differential $du$ lies in the kernel of the moment map
\begin{equation*}
\xymatrix{
\mu = \mu_1 \times (-\mu_2): T^{*}_{g(t)G(\cO)}\Gr_G \times T^{*}_{(x,t_x)}\Coord^0(U)\ar[r] & \Der(\calO)^{*}
}
\end{equation*}
The moment map $\mu_2$ is our usual identification \eqref{eq:cot coord}. Using this identification, we thus have
\begin{equation*}
\xymatrix{
\ker(\mu) = \{(\eta, \mu_{1}(\eta)) \in T^{*}_{g(t)G(\cO)}\Gr_G  \times \Der(\cO)^* \, \vert\,  \eta\in T^{*}_{g(t)G(\cO)}\Gr_G\}
}
\end{equation*}

Recall from~\eqref{eq: moment map},
the first factor $\mu_1$, viewed as a bilinear map, takes the form
\begin{equation*}
\xymatrix{
\Der(\cO)\times   T^*_{g(t)G(\cO)} \Gr_G \ar[r] & \CC
}
\end{equation*}
\begin{equation*}
\xymatrix{
(f(t)\partial_t, \eta) \ar@{|->}[r] &\langle f(t) \partial_t, \mu_1(\eta)\rangle
=   \Res_{t=0} \langle f(t)g'(t) g(t)^{-1}, \eta\rangle
}
\end{equation*}
Using the fact that the first component of $du$ is given by $\phi\mapsto \phi|_{D^{\times}}$, we conclude  that  the second component of $du$, viewed as a bilinear map, takes the asserted form
\begin{equation*}
\xymatrix{
(f(t)\partial_t, \phi) \ar@{|->}[r] &  \langle f(t) \partial_t, \mu_1(\phi|_{D^\times})\rangle
=  \Res_{t=0} \langle f(t)g'(t) g(t)^{-1}, \phi|_{D^\times}\rangle.
}
\end{equation*}
\end{proof}


\subsection{Hecke modifications}

Introduce the Hecke diagram 
\begin{equation*}
\xymatrix{
\Bun_G(X)  & \ar[l]_-{p_-} \Hecke_G(X) \ar[r]^-{p_+ \times \pi} &   \Bun_G(X) \times X  \\
 }
\end{equation*}
where $\Hecke_G(X)$ classifies a point $x\in X$, a pair of bundles $\calE_-, \calE_+ \in \Bun_G(X)$, together with
an isomorphism  of $G$-bundles over $X \setminus x$
\begin{equation*}
\xymatrix{
\a: \calE_-|_{X \setminus x} \simeq \calE_+|_{X \setminus x}.
 }
\end{equation*}
The map $p_-$ returns the bundle $\calE_-$, $p_+$ the bundle $\calE_+$, and $\pi$ the point $x$.

For each $\lambda\in \Lambda_T^+$, we have a subdiagram
\begin{equation}\label{eq:Hk lam}
\xymatrix{
\Bun_G(X)  & \ar[l]_-{p^\l_-} \Hecke^\l_G(X) \ar[r]^-{p^\l_+\times \pi} &   \Bun_G(X) \times X  \\
 }
\end{equation}
where  $\calE_-, \calE_+$ are in relative position $ \l$ at the point  $x$, i.e., upon trivializing $\cE_{-}$ and $\cE_{+}$ over $D_{x}$ and choosing a local coordinate $t_{x}$ at $x$ to identify $D_{x}$ with $D$, $\a$ as an element in $G(\cK)$ lies in $G(\cK)^{\l}$. The closure  $ \ov\Hecke^\l_G(X) $ of $\Hecke^\l_G(X) $ also gives a diagram
\begin{equation}\label{eq:ov Hk lam}
\xymatrix{
\Bun_G(X)  & \ar[l]_-{\ov p^\l_-} \ov\Hecke^\l_G(X) \ar[r]^-{\ov p^\l_+\times \pi} &   \Bun_G(X) \times X  \\
 }
\end{equation}
where  $\calE_-, \calE_+$ are in relative position $ \leq \l$ at the point  $x$, i.e., upon the same trivializations as above, $\a$ as an element in $G(\cK)$ lies in $\ov{G(\cK)}^{\l}$.

\subsubsection{Satake kernels}\label{sss:Sat kernel}

Using the $\Aut^{0}(\cO)$-action on $G(\cO)$ and $G(\cK)$, we introduce the group scheme
\begin{equation*}
\xymatrix{ \cG_{X}^{\cO}=\Coord^{0}(X)\overset{\Aut^{0}(\calO)}{\times}G(\cO) \ar[r] & X
}
\end{equation*}
and the group ind-scheme 
\begin{equation*}
\xymatrix{ \cG_{X}=\Coord^{0}(X)\overset{\Aut^{0}(\calO)}{\times}G(\cK)  \ar[r] & X
}
\end{equation*}

Let $\wh\Bun_G(X)_X \to \Bun_G(X)\times X$ denote the $\calG^\cO_X$-torsor 
classifying a point $x\in X$, a 
$G$-bundle $\calE$ over $X$, and a trivialization of the restriction $\calE|_{D_x}$. By \cite[2.8.4]{BD},  the $\calG^\cO_X$-action on $\wh\Bun_G(X)_X$ (by changes of trivialization) naturally extends to a $\cG_X$-action by the usual gluing paradigm.

We have a canonical isomorphism
\begin{equation}\label{Hk Gr}
\xymatrix{
\Hecke_G(X)\simeq  \wh \Bun_G(X)_X \overset{\cG^\cO_X}{\times_X} \Gr_{G, X}
}
\end{equation}
so that $ p_-$ is the evident projection to the first factor, and $ p_+$  is given by the $\cG_{X}$-action on $\wh \Bun_G(X)_X $.

Consider the resulting natural diagram
\begin{equation*}
\xymatrix{
\Hecke_G(X)  & \ar[l]_-q \Hecke_G(X) \times_X \Coord^0(X) \ar[r]^-p &   G(\cO)\backslash \Gr_G
 }
\end{equation*}
where $q$ is the evident $\Aut^0(\cO)$-torsor, and $p$ records the relative position of the pair $(\cE|_{D_{x}},\cE'|_{D_{x}})$ using the local coordinate at $x$.

The $\Aut^{0}(\cO)$-equivariance of any object $\cV\in \Sat^0_G$ induces an $\Aut^{0}(\cO)$-equivariance on 
the pullback $p^{*}\cV$ 
along the $\Aut^0(\cO)$-equivariant map $p$. Since $q$ is an $\Aut^0(\cO)$-torsor,  the $\Aut^{0}(\cO)$-equivariant complex $p^*\cV$ descends along $q$ to a unique complex we denote by $\cV'\in \Sh(\Hecke_{G}(X), E)$.

Let $\Sh(\Bun_G(X), E)$ be the dg derived category of all complexes of $E$-modules on $\Bun_G(X)$, in the sense explained in Section~\ref{ss:auto notation}.

Introduce  the  Hecke functor
\begin{equation}\label{eq:Sat kernel}
\xymatrix{
 H_\cV:\Sh(\Bun_G(X), E) \ar[r] & \Sh(\Bun_G(X) \times X, E) 
&
 H_\cV (\calF) =   ( p_+ \times \pi)_! (\cV' \otimes_{ E} ( p_{-})^*\calF). 
}
\end{equation}

\begin{ex}
For $\cV = \un E_{\Gr^\l_G}$ with its natural $ \Aut^0(\cO) \ltimes G(\cO)$-equivariance, the corresponding Hecke functor $H_\cV=H^\l$ is given by (using notation from \eqref{eq:Hk lam})
\begin{equation*}
\xymatrix{
H^\l:   \Sh(\Bun_G(X), E) \ar[r] & \Sh(\Bun_G(X) \times X, E)}
\end{equation*}
\begin{equation*}
\xymatrix{
H^\l(\calF) = (p^\l_{+}\times\pi)_!(p^\l_{-})^*\calF.}
\end{equation*}
\end{ex}


\subsubsection{Hecke stack resolutions}
The maps $p_-^\l, p_+^\l\times \pi$ are in general smooth and (locally on the base) quasi-projective
but not proper, while the maps
$\ov p_-^\l, \ov p_+^\l\times \pi$ are  in general (locally on the base) projective but not smooth.
Thus estimating the singular support of the Hecke functors is not as concrete as we would like. 
We will find it convenient in Section~\ref{s: main thm} to work with a smooth resolution to address this.

Under the  isomorphism \eqref{Hk Gr}, we have
\begin{equation*}
\xymatrix{
\ov\Hecke^\l_G(X)\simeq  \wh \Bun_G(X)_X \overset{\cG^\cO_X}{\times_X}\ov \Gr^\l_{G, X}.
}
\end{equation*}

Recall the resolution $\nu^{\l}: \wt \Gr_G^\l \to \ov\Gr_G^\l$ from Section ~\ref{s:Gr res}, and consider its globalization
\begin{equation*}
\xymatrix{
\wt \Gr_{G, X}^\l = \Coord^0(X) \overset{\Aut^0(\calO)}{\times} \wt \Gr^\l_G.
}
\end{equation*} 
Introduce the resolved Hecke stack
\begin{equation*}
\xymatrix{
r^\l:\wt \Hecke^\l_G(X) :=   \wh\Bun_G(X)_X \overset{\cG^\cO_X}{\times_X}\wt \Gr^\l_{G, X}
\ar[r] & 
 \wh\Bun_G(X)_X \overset{\cG^\cO_X}{\times_X}\ov \Gr^\l_{G, X}\simeq \ov\Hecke^\l_G(X).
}
\end{equation*}

Form
the resolved Hecke diagram
\begin{equation}\label{eq:wt Hk lam}
\xymatrix{
&& \ar[lld]_-{ \wt p^\l_-}  \wt \Hecke^\l_G(X) \ar[d]^-{r^\l} \ar[rrd]^-{\wt p_+^\l\times \pi}  &&  \\
\Bun_G(X)  && \ar[ll]_-{ \ov p^\l_-}  \ov \Hecke^\l_G(X)  \ar[rr]^-{\ov p_+^\l\times \pi} &&   \Bun_G(X) \times X  
 }
\end{equation}
with commutative triangles.
Since  $\nu^{\l}: \wt \Gr_G^\l \to \ov\Gr_G^\l$ is $G(\cO)$-equivariant, its restriction over each $G(\cO)$-orbit $\Gr_G^\mu\subset \ov\Gr_G^\l$ is an \'etale locally trivial fibration, and therefore $r^\l$ is an \'etale locally trivial fibration above each smooth substack $\Hecke^\mu_G(X) \subset \ov \Hecke^\l_G(X)$, for $\mu\leq \lambda$.

Introduce the  Hecke functors
\begin{equation}\label{wt H}
\xymatrix{
\wt H^\l:\Sh(\Bun_G(X), E) \ar[r] & \Sh(\Bun_G(X) \times X, E) 
&
\wt H^\l (\calF)=  (\wt p_+^\l\times \pi)_! (\wt p^\l_{-})^*\calF.
}
\end{equation}
Then under the notation \eqref{eq:Sat kernel}, we have
\begin{equation*}
\xymatrix{   \wt H^\l     \simeq     H_\cV, &  \textup{ for } \cV = \nu^{\l}_!\un E_{\wt \Gr^\l_G}.
}
\end{equation*}



%
%
%
%


\section{Microlocal geometry}\label{s: micro}


\subsection{Singular support}

We recall some basic definitions and properties of the singular support of a complex of sheaves.
The standard reference is Kashiwara-Schapira's book~\cite{KS}. 
Much of the theory developed therein is for bounded or bounded below complexes. One can remove this assumption, using the formalism of six operations presented in~\cite{Sp} and the specific microlocal foundations provided by~\cite{RS}.


\subsubsection{Schemes}\label{sss:sing scheme}

Let $U$ be a smooth scheme with cotangent bundle $T^*U$. 

Let $\Sh(U)$ 
be the dg derived category of all complexes of abelian groups on $U$.
We will often abuse terminology and use the term sheaves
to refer to its objects.

Suppose $\calS = \{U_\alpha\}_{\alpha\in A}$ is a $\mu$-stratification of $U$ in the sense of \cite[Def. 8.3.19]{KS}.
Let $\Lambda_\cS = \cup_{\alpha\in A} T^*_{U_\alpha} U \subset T^* U$ denote the union of the conormal bundles to the strata.

Let $\Sh_{\cS}(U) \subset \Sh(U)$ denote the full dg subcategory of complexes locally constant along the strata of $\cS$.
For  any $\cF\in \Sh_{\cS}(U)$, its singular support $\sing(\calF)\subset T^*X$ is a closed conic  Lagrangian subscheme.
By~\cite[Prop. 8.4.1]{KS}, we have the containment $\sing(\cF) \subset \Lambda_\cS$, and hence $\sing(\cF)$ is a union of some irreducible components of $\Lambda_\cS$. An irreducible component of $\Lambda_\cS$ is not in the singular support $\sing(\cF)$ if and only if the vanishing cycles $\phi_f (\cF)$ are trivial for some germ of a  function $f$ at a point $u\in U$  whose  differential $df|_u \in T^* U$ is a generic point of the  irreducible component.

Given a closed conic Lagrangian subscheme $\Lambda\subset T^*U$, by~\cite[Cor. 8.3.22]{KS}, we may choose a $\mu$-stratification $\calS = \{U_\alpha\}_{\alpha\in A}$ of $U$
such that $\Lambda \subset \Lambda_\cS = \cup_{\alpha\in A} T^*_{U_\alpha} U \subset T^* U$.
Denote by $\Sh_\Lambda(U) \subset \Sh_\cS(U)$ 
the full dg subcategory 
of complexes with  singular support $\sing(\calF)$ contained within $\Lambda$.
By~\cite[Thm. 8.3.20, Prop. 8.4.1]{KS},  this is independent of the choice of $\mu$-stratification. 

Singular support satisfies the following functoriality. For details we refer to \cite[Chapter 5.4]{KS}. Let $f: U\to V$ be a morphism between smooth schemes. We consider the Lagrangian correspondence
\begin{equation*}
\xymatrix{T^{*}U  & \ar[l]_-{df} T^*V \times_{V} U   \ar[r]^-{f_{\nat}} & T^*V }
\end{equation*}

\begin{enumerate}
\item Smooth pullback. Suppose $f$ is smooth, then for $\cG\in \Sh(V)$,  we have
\begin{equation}\label{eq: pullback}
\xymatrix{         \sing(f^{*}\cG)=df(f_{\nat}^{-1}(\sing(\cG))).
}
\end{equation}
\item Proper pushforward. Suppose  $f$ is proper, then for $\cF\in \Sh(U)$, we have
\begin{equation}\label{eq: pushforward}
\xymatrix{\sing(f_{*}\cF)\subset f_{\nat}(df^{-1}(\sing(\cF)))}.
\end{equation}
\end{enumerate}


\subsubsection{Stacks}\label{ss: sing stacks}

Let  $Y$ be a smooth stack, let $\TT^* Y$ be the total space of its cotangent complex,
and $T^*Y$ its underlying classical cotangent bundle.

Let $\Sh(Y)$ 
be the dg derived category of all complexes on $Y$.

Thanks to the  functoriality recalled above, the notion of singular support readily extends to this setting. 
Namely,
for $\calF \in \Sh(Y)$, we may assign its singular support $\sing(\calF) \subset T^*Y$ uniquely characterized
by the following property. For a  smooth map $u:U\to Y$ where $U$ is a smooth scheme,
with induced correspondence
\begin{equation*}
\xymatrix{
T^*U & \ar[l]_-{du} T^*Y \times_Y U \ar[r]^-{u_{\nat}} & T^*Y,
}
\end{equation*}
we have the compatibility
\begin{equation*}
\xymatrix{
\sing(u^*\calF) = du(u_{\nat}^{-1}(\sing(\calF)) \subset T^*U
}
\end{equation*}

Observe that with this characterization in hand, the functoriality recalled in Section~\ref{sss:sing scheme} readily extends to representable maps of smooth stacks.
%
%


\subsection{Analysis of Hecke correspondences}\label{s: main thm}

Let $  \Sh_{\calN_G(X)}(\Bun_G(X), E) \subset \Sh(\Bun_G(X), E)$ denote the
 full subcategory 
of complexes with singular support in the global nilpotent cone 
\begin{equation*}
\xymatrix{
\sing(\calF) \subset \calN_G(X).
}
\end{equation*}


The rest of the section is devoted to the proof of the following.

\begin{theorem}\label{th:ss} For any kernel $\cV\in \Sat^{0}_{G}$, the Hecke functor $H_{\cV}$ preserves nilpotent singular support, and,  for sheaves with nilpotent singular support,
it does not introduce non-zero singular codirections along the curve. In other words, for $\cF\in \Sh(\Bun_{G}(X), E)$, 
\begin{equation*}
\xymatrix{
\sing(\calF) \subset \calN_G(X) \implies 
\sing(H^\l(\calF)) \subset \calN_G(X) \times X, 
}
\end{equation*}
where $X\subset T^*X$ denotes the zero-section.
\end{theorem}

\begin{remark}
The results and arguments to follow will not involve any  object $\calF$ in any specific way, but  devolve to the maximal possible singular support of
 $\calN_G(X)$ itself.
\end{remark}

\begin{remark}
Since the curve $X$ and group $G$ will not change, to simplify the notation in what follows, let us write $\Bun = \Bun_G(X)$, $\Hecke = \Hecke_G(X)$, 
and similarly for related spaces.
\end{remark}

\begin{proof}[Proof of Theorem \ref{th:ss}.]
Let $\l$ be a dominant coweight.  First, to the map $ p_-^\l$, we have the Lagrangian correspondence
\begin{equation*}
\xymatrix{
T^*\Bun  & \ar[l]_-{ (p^\l_-)_{\nat}}   T^*\Bun \times_{\Bun} \Hecke^\l \ar[r]^-{dp^\l_-} & T^*\Hecke^\l
 }
\end{equation*}

Second, to the map $p_+^\l\times \pi$, we have the Lagrangian correspondence
\begin{equation*}
\xymatrix{
 T^*\Hecke^\l & \ar[l]_-{d(p_+^\l \times \pi)} T^*(\Bun \times X) \times_{\Bun \times X} \Hecke^\l\ar[r]^-{(p_+^\l\times \pi)_{\nat}} & T^*(\Bun \times X)
 }
\end{equation*}

Pretending $p^{\l}_+$ is proper, the functoriality of singular support suggests the consideration of the following conic Lagrangian in $T^*(\Bun\times X)$
\begin{equation*}
\xymatrix{
\sing^\l := (p_+^\l\times \pi)_{\nat}(d(p_+^\l \times \pi))^{-1}(dp^\l_-)(p^\l_-)^{-1}_{\nat} \calN_G(X) \subset T^*(\Bun \times X).
}
\end{equation*}

\begin{claim} For any $\l\in\L_{T}^{+}$, the conic Lagrangian substack $\sing^\l$ satisfies
\begin{equation*}
\xymatrix{
\sing^\l \subset \calN_G(X) \times X.
}
\end{equation*}
\end{claim}
The claim can be viewed as a naive version of the theorem: if $p^{\l}_+$ were always proper, the Claim would imply the theorem by the functoriality of singular support recalled in \eqref{eq: pullback} and \eqref{eq: pushforward}. We will turn to the proof of the Claim in a moment, but let us first see that it implies the theorem.

\begin{lemma}\label{l:claim to thm}
The Claim implies Theorem~\ref{th:ss}. 
\end{lemma}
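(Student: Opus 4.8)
The plan is to reduce Theorem~\ref{th:ss} to the Claim by exploiting two standard features of singular support: its compatibility with the functors defining $H_\cV$, and the reduction, via Lemma~\ref{l:gen Sat}, to the generating kernels $\cV=\nu^\l_!\un E_{\wt\Gr^\l_G}$. Concretely, I would first observe that since singular support of a finite complex is contained in the union of the singular supports of its terms, and since $H_{(-)}$ is exact in $\cV$, it suffices to prove the singular-support estimate for each $H_\cV$ with $\cV$ ranging over one of the generating collections in Lemma~\ref{l:gen Sat}. The most convenient choice is $\{\nu^\l_!\un E_{\wt\Gr^\l_G}\}_{\l\in\Lambda_T^+}$, for which, as recorded after \eqref{wt H}, $H_\cV\simeq\wt H^\l$, realized through the \emph{smooth} resolved Hecke diagram \eqref{eq:wt Hk lam}.

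Next, using the resolved diagram, I would write $\wt H^\l(\cF)=(\wt p_+^\l\times\pi)_!(\wt p_-^\l)^*\cF$ with $\wt p_-^\l$ smooth and $\wt p_+^\l\times\pi$ proper (the point of passing to the resolution). Then \eqref{eq: pullback} for smooth pullback and \eqref{eq: pushforward} for proper pushforward, as extended to representable maps of smooth stacks in Section~\ref{ss: sing stacks}, give
\begin{equation*}
\sing(\wt H^\l(\cF))\subset (\wt p_+^\l\times\pi)_{\nat}\big(d(\wt p_+^\l\times\pi)^{-1}\big)\big(d\wt p_-^\l\big)\big((\wt p_-^\l)_{\nat}^{-1}\sing(\cF)\big).
\end{equation*}
Since $\sing(\cF)\subset\calN_G(X)$, the right-hand side is contained in the conic Lagrangian obtained by transporting $\calN_G(X)$ through the resolved correspondence; the content of the Claim (in its resolved incarnation, or after pushing forward along $r^\l$ and using that $r^\l$ is proper so does not enlarge singular support beyond $\sing^\l$) is precisely that this transported locus sits inside $\calN_G(X)\times X$. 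Feeding that in yields $\sing(\wt H^\l(\cF))\subset\calN_G(X)\times X$, which is the assertion of the theorem for the generating kernels, hence for all of $\Sat^0_G$.

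The only subtlety I anticipate is bookkeeping between the two diagrams: the Claim is phrased for the unresolved correspondence \eqref{eq:Hk lam}, whereas the clean functoriality statements require the smooth/proper resolved maps of \eqref{eq:wt Hk lam}. I would handle this by either (i) proving the resolved analogue of the Claim directly --- the relevant computation is local on $\Gr_G$ and $\wt\Gr^\l_G\to\ov\Gr^\l_G$ is an isomorphism over the open orbit, so the generic estimate (where "nilpotent" is equivalent to "generically nilpotent", as noted after the global nilpotent cone is introduced) is unchanged --- or (ii) noting that $H_\cV=H_{\cV'}$ for related kernels and invoking the commutative triangles in \eqref{eq:wt Hk lam} together with properness of $r^\l$ to transfer the estimate. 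Either way the essential geometric input is exactly the Claim; Lemma~\ref{l:claim to thm} is the formal glue, and the genuinely hard part --- the infinitesimal computation showing no singular codirection along $X$ is created, which rests on Proposition~\ref{p:diff Gr BunG} and the structure of the moment map --- is deferred to the proof of the Claim itself.
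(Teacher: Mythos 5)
Your overall skeleton matches the paper's: reduce via Lemma~\ref{l:gen Sat} to the kernels $\nu^\l_!\un E_{\wt\Gr^\l_G}$, use the resolved diagram \eqref{eq:wt Hk lam} so that \eqref{eq: pullback} and \eqref{eq: pushforward} apply, and feed the resulting transported Lagrangian into the Claim. You also correctly flag the one real subtlety --- the Claim lives on the unresolved strata $\Hecke^\mu$ while the functoriality estimate lives on $T^*\wt\Hecke^\l$. But neither of your proposed fixes closes that gap. Option (i) only controls points of $\wt\Hecke^\l$ lying over the open stratum $\Hecke^\l$, where $r^\l$ is an isomorphism; a singular-support bound is not determined by its ``generic'' part over the correspondence, and the remark about ``nilpotent iff generically nilpotent'' concerns genericity of a Higgs field on the curve $X$, not genericity on $\ov\Gr^\l_G$, so it does nothing for points over the smaller strata. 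Option (ii) would require pushing the estimate through $\ov\Hecke^\l$, which is singular, so the cotangent-bundle functoriality of Section~\ref{ss: sing stacks} does not apply there; ``properness of $r^\l$'' alone does not transfer the estimate to the locally closed, non-proper maps $p^\mu_\pm$ appearing in the Claim.

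The missing idea is the following. Given a point $\wt h\in\wt\Hecke^\l$ contributing to the right-hand side of the estimate, its image $h=r^\l(\wt h)$ lies in some stratum $\Hecke^\mu$ with $\mu\le\l$, and $r^\l$ restricts to a \emph{fibration} $r^\l|_\mu:\wt\Hecke^\l|_{\Hecke^\mu}\to\Hecke^\mu$ (this is why the resolution was chosen $G(\cO)$-equivariantly). Choosing a local section of $r^\l|_\mu$ through $\wt h$ and pulling back the covector identity \eqref{eq: equality over res} in $T^*_{\wt h}\wt\Hecke^\l$ yields the corresponding identity \eqref{eq: equality over hecke} in $T^*_h\Hecke^\mu$, which exhibits the point as lying in $\sing^\mu$. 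One then invokes the Claim for $\mu$ --- for \emph{all} $\mu\le\l$, not just for $\l$ --- to conclude. Your write-up never brings the lower strata or the quantifier over $\mu$ into play, and without that step the argument does not go through.
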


\begin{proof} 


Recall from \eqref{wt H} the Hecke functors $\wt H^\l$ associated to the Hecke correspondences \eqref{eq:wt Hk lam} (corresponding to the kernel $\nu^{\l}_{!}\un E_{\wt\Gr^{\l}_{G}}$), for $\l\in \L^{+}_{T}$. By Lemma \ref{l:gen Sat}, the theorem is equivalent to the assertion:
\begin{equation*}
\xymatrix{
\sing(\calF) \subset \calN_G(X) \implies 
\sing(\wt H^\l(\calF)) \subset \calN_G(X) \times X, \quad \forall \l\in  \L^{+}_{T}. 
}
\end{equation*}


%
%

To the map $ \wt p_-^\l$, we have the Lagrangian correspondence
\begin{equation*}
\xymatrix{
T^*\Bun  & \ar[l]_-{ (\wt p^\l_-)_{\nat}}   T^*\Bun \times_{\Bun} \wt \Hecke^\l \ar[r]^-{d\wt p^\l_-} & T^*\wt \Hecke^\l
 }
\end{equation*}

To the map $\wt p_+^\l\times \pi$, we have the Lagrangian correspondence
\begin{equation*}
\xymatrix{
 T^*\wt \Hecke^\l && \ar[ll]_-{d(\wt p_+^\l \times \pi)} T^*(\Bun \times X) \times_{\Bun \times X} \wt \Hecke^\l\ar[rr]^-{(\wt p_+^\l\times \pi)_{\nat}} && T^*(\Bun \times X)
 }
\end{equation*}

The standard properties~\eqref{eq: pullback}, \eqref{eq: pushforward} imply
\begin{equation}\label{eq: sing under res hecke}
\xymatrix{
\sing( \wt H^\l(\cF) ) \subset  
(\wt p_+^\l\times \pi)_{\nat}(d(\wt p_+^\l \times \pi))^{-1}(d\wt p^\l_-)(\wt p^\l_-)^{-1}_{\nat} \sing(\cF).
}
\end{equation}
We will show that the right hand side lies in $\cN_{G}(X)\times X$.

Unwinding the definitions, the right hand side of \eqref{eq: sing under res hecke} comprises all elements 
\begin{equation*} 
((\cE_+, x), (\phi_+, \theta)) \in T^*(\Bun \times X)
\end{equation*}
 arising as follows:
 there is a point $\wt h\in  \wt \Hecke^\l$ with images 
\begin{equation*}
\xymatrix{
 \cE_- =\wt p_-^\l(\wt h), \cE_+ =\wt p_-^\l(\wt h)\in  \Bun,
 & x = \pi(\wt h)\in X 
}
\end{equation*}
 along with a covector 
\begin{equation*}
\xymatrix{
 \phi_- \in  T^*_{\cE_-} \Bun 
 }
\end{equation*}
satisfying the equation
\begin{equation}\label{eq: equality over res}
\xymatrix{
 d\wt p^\l_-(\cE_{-}, \phi_-) = d(\wt p^\l_+ \times \pi) ((\cE_{+}, x), (\phi_+, \theta)) \in T^*_{\wt h} \wt \Hecke^\l.
 }
\end{equation}

Now for some $\mu\leq \l$, we have
\begin{equation*}  
\xymatrix{
h = r^\l(\wt h) \in \Hecke^\mu \subset \ov\Hecke^\l.
}
\end{equation*}
Recall that  $r^\l$ restricts to an \'etale locally trivial fibration 
\begin{equation*}
\xymatrix{
r^\l|_\mu:\wt \Hecke^\l|_{\Hecke^\mu} \ar[r] & \Hecke^\mu. 
}
\end{equation*}
Thus we may choose a section of $r^\l|_\mu$ above the formal  neighborhood of $h$
passing through $\wt h$.
Pullback along this section  shows that  \eqref{eq: equality over res}  also implies the equation
\begin{equation}\label{eq: equality over hecke}
\xymatrix{
 d p^\mu_-(\cE_{-}, \phi_-) = d(p^\mu_+ \times \pi) ((\cE_{+}, x), (\phi_+, \theta)) \in T^*_{h} \Hecke^\mu.
 }
\end{equation}
But this is precisely the equation that exhibits
\begin{equation*} 
((\cE_{+}, x), (\phi_+, \theta)) \in \sing^\mu.
\end{equation*}

By the Claim, $\sing^\mu\subset \cN_{G}(X)\times X$, therefore $((\cE_{+}, x), (\phi_+, \theta))\in \cN_{G}(X)\times X$. Hence the right hand side of \eqref{eq: sing under res hecke} lies in $\cN_{G}(X)\times X$. Thus the Claim implies the theorem.
\end{proof}

Now it remains to prove the Claim.

\begin{proof}[Proof of the Claim.]
Fix $\CC$-points $x\in X$, $\calE_-, \calE_+ \in \Bun$, and respective covectors 
\begin{equation*}
\xymatrix{
\theta\in T^*_x X, & 
\phi_- \in H^0(X, \frg^*_{\calE_-}\otimes \om_X), & \phi_+ \in H^0(X, \frg^*_{\calE_+}\otimes \om_X).
}
\end{equation*}

Fix an isomorphism  
\begin{equation*}
\xymatrix{
\alpha:\calE_-|_{X\setminus x} \ar[r]^-\sim & \calE_+|_{X\setminus x} 
 }
\end{equation*}
of relative position $\l$ at $x$,  so that we have an equality
\begin{equation}\label{covector equality}
\xymatrix{
dp_-^\l(\phi_-) = dp_+^\l(\phi_+) + d\pi(\theta) \in T^*_{(x, \calE_-, \calE_+, \alpha)}\Hecke^\l.
 }
\end{equation}
Then to prove the Claim, we must show: 
if $\phi_- \in \calN_G(X)$, then $\phi_+\subset \calN_G(X)$ and $\theta = 0$.

Set $\Hecke^\l|_x\subset \Hecke^\l$ to be the fiber, and $j:X\setminus x\incl X$ the open inclusion.

Note that $\alpha$ induces an isomorphism of quasicoherent sheaves
\begin{equation*}
\xymatrix{
j_*j^*\frg^*_{\calE_-}  \simeq  j_*j^*\frg^*_{\calE_+}.
}
\end{equation*}
Consider the coherent subsheaf
 \begin{equation*}
\xymatrix{
\frg^*_{\calE_- \vee \calE_+}   :=\frg^*_{\calE_-} +  \frg^*_{\calE_+} \subset  j_*j^*\frg^*_{\calE_-}\simeq  j_*j^*\frg^*_{\calE_+}.
 }
\end{equation*}

Then the fiber  of the cotangent complex of $\Hecke^\l|_x$ 
is given by the complex of derived sections
\begin{equation*}
\xymatrix{
\TT^*_{(\calE_-, \calE_+, \alpha)} \Hecke^\l|_x  \simeq \Gamma(X, \frg^*_{\calE_- \vee \calE_+}  \otimes\om_{X}),
}
\end{equation*}
and that of its underlying classical cotangent bundle 
by the space of sections
\begin{equation*}
\xymatrix{
T^*_{(\calE_-, \calE_+, \alpha)} \Hecke^\l|_x  \simeq H^0(X, \frg^*_{\calE_- \vee \calE_+}  \otimes\om_{X}).
}
\end{equation*}

Thus we have a short exact sequence
\begin{equation*}
\xymatrix{
T^*_x X \ar[r] & T^*_{(x, \calE_-, \calE_+, \alpha)} \Hecke^\l  \ar[r] &
 H^0(X, \frg^*_{\calE_- \vee \calE_+}  \otimes\om_{X}).
}
\end{equation*}

The pullback of covectors
\begin{equation*}
\xymatrix{
 H^0(X,\frg^*_{\calE_{-}}\otimes\om_{X})
 \simeq T^{*}_{\calE_{-}}\Bun
\ar[r]^-{ dp_-^\l} & 
T^*_{(x, \calE_-, \calE_+, \alpha)} \Hecke^\l
\ar[r] &
 H^0(X, \frg^*_{\calE_- \vee \calE_+}  \otimes\om_{X})
 }
\end{equation*} 
is the inclusion induced by the inclusion
\begin{equation*}
\xymatrix{
\frg^*_{\calE_-} \ar@{^(->}[r] & \frg^*_{\calE_- \vee \calE_+}.
 }
\end{equation*} 
Likewise,  the pullback of covectors
\begin{equation*}
\xymatrix{
 H^0(X,\frg^*_{\calE_{+}}\otimes\om_{X})
 \simeq T^{*}_{\calE_{+}}\Bun
\ar[r]^-{ dp_+^\l} & 
T^*_{(x, \calE_-, \calE_+, \alpha)} \Hecke^\l
\ar[r] &
 H^0(X, \frg^*_{\calE_- \vee \calE_+}  \otimes\om_{X})
 }
\end{equation*} 
is  the inclusion induced by the inclusion
\begin{equation*}
\xymatrix{
\frg^*_{\calE_+} \ar@{^(->}[r] & \frg^*_{\calE_- \vee \calE_+}. 
 }
\end{equation*} 

Therefore the equality~\eqref{covector equality} implies, after passing to  
\begin{equation*}
\xymatrix{
T^*_{(\calE_-, \calE_+, \alpha)} \Hecke^\l|_x  \simeq H^0(X, \frg^*_{\calE_- \vee \calE_+}  \otimes\om_{X}),
}
\end{equation*}
 that we have the equality
\begin{equation*}
\xymatrix{
\phi_{-}|_{X\setminus x}=\phi_{+}|_{X\setminus x}.
}
\end{equation*} 
In particular, if $\phi_- \in \calN_G(X)$, then $\phi_+\in \calN_G(X)$.

Thus it remains to  show  if $\phi_- \in \calN_G(X)$, then $\theta = 0$.

Consider the coherent subsheaf
 \begin{equation*}
\xymatrix{
\frg^*_{\calE_- \wedge \calE_+}   :=\frg^*_{\calE_-} \cap  \frg^*_{\calE_+} \subset  
\frg^*_{\calE_- \vee \calE_+} . 
 }
\end{equation*}
Let $\phi$ denote the common value of $\phi_-, \phi_+$ upon passing to   
$H^0(X, \frg^*_{\calE_- \vee \calE_+}  \otimes\om_{X})$,
and note that it
lies in the subspace
\begin{equation*}
\xymatrix{
H^0(X, \frg^*_{\calE_- \wedge \calE_+}  \otimes\om_{X})\subset H^0(X, \frg^*_{\calE_- \vee \calE_+}  \otimes\om_{X}).
}
\end{equation*}

Since  the equality~\eqref{covector equality} can be rewritten as
 \begin{equation*}
\xymatrix{
d\pi(\theta)=dp_-^\l(\phi_{-})- dp_+^\l(\phi_+),
 }
\end{equation*}
it suffices to 
show: 
\begin{equation}\label{phi pm}
\mbox{When $\phi\in
H^0(X, \frg^*_{\calE_- \wedge \calE_+}  \otimes\om_{X})$ is nilpotent, we have 
$dp_-^\l(\phi_{-})= dp_+^\l(\phi_+)$.}
\end{equation}


We will deduce this by passing to a Grassmannian uniformization. Let us denote by 
 \begin{equation*}
\xymatrix{
(\Gr_G {\times} \Gr_G)_{\l}
\subset \Gr_G {\times} \Gr_G
 }
\end{equation*}
the subspace of the product comprising lattices in relative position $\lambda$. Similar to the construction of the map $u$ in \S\ref{sss:unif}, we have the uniformization map
 \begin{equation*}
\xymatrix{
r:(\Gr_G {\times} \Gr_G)_{\l} \times \Coord^0(X)
\ar[r] & (\Gr_G {\times} \Gr_G)_{\l} \twtimes{\Aut^{0}(\cO)} \Coord^0(X) \ar[r] & \Hecke^\l.
 }
\end{equation*}
To show \eqref{phi pm}, we will show that
\begin{equation}\label{r phi pm}
\mbox{When $\phi\in
H^0(X, \frg^*_{\calE_- \wedge \calE_+}  \otimes\om_{X})$ is nilpotent, we have 
$dr(dp_-^\l(\phi_{-}))= dr(dp_+^\l(\phi_+))$.}
\end{equation}
By the discussion in \S\ref{sss:du}, the uniformization map $u$, and hence $r$, are submersions in the sense that they induce injective maps on cotangent spaces.  However, $u$ and $r$ are guaranteed to be surjective only when $G$ is semisimple. Therefore when $G$ is semisimple,  \eqref{r phi pm} implies \eqref{phi pm}. The case of a  reductive $G$ can be reduced  to the semisimple case as follows. Let $G^{\ad}$ be the adjoint quotient of $G$, with maximal torus $T^{\ad}=T/Z(G)$, and let $\ov\l\in \L^{+}_{T^{\ad}}$ be the image of $\l$. Then we have the Hecke correspondence
\begin{equation*}
\xymatrix{
\Bun_{G^{\ad}}(X)  & \ar[l]_-{p^{\ad,\l}_-} \Hecke^{\ov\l}_{G^{\ad}}(X) \ar[r]^-{p^{\ad,\l}_+\times \pi} &   \Bun_{G^{\ad}}(X) \times X  \\
}
\end{equation*}
Since $\phi_{\pm}$ is nilpotent, it lies in $H^{0}(X, \frg^{\ad,*}_{\cE_{\pm}}\otimes\om_{X})\subset H^{0}(X, \frg^{*}_{\cE_{\pm}}\otimes\om_{X})$. For $\nu: \Hecke^{\l}=\Hecke^{\l}_{G}(X)\to \Hecke^{\ov\l}_{G^{\ad}}(X)$ the natural map, we have
\begin{equation*}
\xymatrix{dp_-^\l(\phi_{-})= d\nu\circ dp^{\ad,\l}_{-}(\phi_{-}), & dp_+^\l(\phi_{+})= d\nu\circ dp^{\ad,\l}_{+}(\phi_{+}).}
\end{equation*}
Therefore it suffices to treat the case $G=G^{\ad}$. 

From now on we assume $G$ is semisimple, in which case it suffices to show \eqref{r phi pm}. Choose $g_{-}(t), g_{+}(t)\in G(\cK), (x,t_{x})\in \Coord^{0}(X)$ such that 
\begin{equation*}
\xymatrix{           r(g_{-}(t)G(\cO), g_{+}(t)G(\cO), x,t_{x})=(x, \cE_{-}, \cE_{+}, \a).
}
\end{equation*}
Using \eqref{eq:cot coord}, the cotangent space of $(\Gr_G {\times} \Gr_G)_{\l} \times \Coord^0(X)$ at $(g_{-}(t)G(\cO), g_{+}(t)G(\cO), x,t_{x})$ can be naturally identified with
\begin{equation*}
\xymatrix{   (\Ad_{g_{-}(t)}\frg^{*}(\cO)dt+\Ad_{g_{+}(t)}\frg^{*}(\cO)dt )\oplus \Der(\cO)^{*}.      \\
}
\end{equation*}
By Proposition ~\ref{p:diff Gr BunG}, under the above decomposition, we have
\begin{equation*}
\xymatrix{    dr(dp^{\l}_{-}(\phi_{-}))=(\phi_{-}|_{D^{\times}}, \mu_{g_{-}(t)G(\cO)}(\phi_{-}|_{D^{\times}})) ,      \\
}
\end{equation*}
\begin{equation*}
\xymatrix{    dr(dp^{\l}_{+}(\phi_{+}))=(\phi_{+}|_{D^{\times}}, \mu_{g_{+}(t)G(\cO)}(\phi_{+}|_{D^{\times}})) .      \\
}
\end{equation*}
Let $\psi=\phi_{+}|_{D^{\times}}=\phi_{-}|_{D^{\times}}\in\Ad_{g_{-}(t)}\frg^{*}(\cO)dt\cap \Ad_{g_{+}(t)}\frg^{*}(\cO)dt$. Now it suffices to show if $\psi$ is nilpotent, then we have the equality
\begin{equation*}
\xymatrix{
\mu_{g_{-}(t)G(\cO)}(\psi)=\mu_{g_{+}(t)G(\cO)}(\psi)\in \Der(\cO)^{*}\simeq (\cK/\cO)dt.
}
\end{equation*}

Write $g_{+}(t)=g_{-}(t)h(t)$ (where $h(t)$ is well-defined in the double coset $G(\cO)\bs G(\cK)/G(\cO)$), so
\begin{eqnarray*}
\mu_{g_{+}(t)G(\cO)}(\psi)&=&\jiao{g'_{+}(t)g_{+}(t)^{-1}, \psi}\\
&=&\jiao{g'_{-}(t)g_{-}(t)^{-1}+\Ad_{g_{-}(t)}(h'(t)h(t)^{-1}), \psi}\\
&=&\mu_{g_{-}(t)G(\cO)}(\psi)+\jiao{h'(t)h(t)^{-1},\Ad_{g_{-}(t)^{-1}}(\psi)}\in  \Der(\cO)^{*}\simeq(\cK/\cO)dt
\end{eqnarray*}

Write $\Ad_{g_{-}(t)^{-1}}(\psi)=\eta dt$, so
\begin{equation}\label{range eta}
\xymatrix{\eta\in \frg^{*}(\cO)\cap \Ad_{h(t)}\frg^{*}(\cO).
}
\end{equation}

It remains to show that
\begin{equation}\label{h eta}
\xymatrix{
\jiao{h'(t)h(t)^{-1},\eta}\in \cO 
}
\end{equation}
for nilpotent $\eta$ satisfying \eqref{range eta}. This is the content of Lemma~\ref{lem: null pairing} immediately below, which will complete the proof of the theorem for $G$ semisimple. The general case then follows by our previous reduction.
\end{proof}

\begin{lemma} \label{lem: null pairing}
Let $h(t)\in G(\cK)$ and $\eta\in \frg^{*}(\cO)\cap \Ad_{h}\frg^{*}(\cO)$ be a nilpotent element, then $\jiao{h'(t)h(t)^{-1}, \eta}\in \fm_{\cO}$, i.e., the inclusion \eqref{h eta} holds.
\end{lemma}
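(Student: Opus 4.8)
The plan is to reduce the statement to a purely local/linear-algebraic claim about the nilpotent cone in $\frg^*$, using the following structure. Write $h = h(t) \in G(\cK)$ and pick a dominant coweight $\l$ with $h \in G(\cO) t^\l G(\cO)$; by $G(\cO)$-invariance of both the pairing $\jiao{h'h^{-1},\eta}$ up to $\fm_\cO$ (under left and right translation of $h$ by $G(\cO)$) and of the condition $\eta \in \frg^*(\cO) \cap \Ad_h \frg^*(\cO)$, I would first reduce to the case $h = t^\l$. Concretely, if $h = g_1 t^\l g_2$ with $g_i \in G(\cO)$, then $h' h^{-1} = g_1' g_1^{-1} + \Ad_{g_1}\big( (t^\l g_2)'(t^\l g_2)^{-1}\big)$, and $g_1' g_1^{-1} \in \frg(\cO)$ pairs into $\cO$ against $\eta \in \frg^*(\cO)$; iterating this absorption on the right as well, one is left to analyze $\jiao{(t^\l)' (t^\l)^{-1}, \eta'}$ for $\eta' = \Ad_{g_1^{-1}}\eta$, which is again nilpotent and lies in $\frg^*(\cO) \cap \Ad_{t^\l}\frg^*(\cO)$. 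So it suffices to treat $h = t^\l$.

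With $h = t^\l$, compute $h' h^{-1} = \l/t \in \frt(\cK) \subset \frg(\cK)$, where I abuse notation to write $\l$ for the image of the cocharacter in $\frt$. Then $\jiao{h' h^{-1}, \eta} = t^{-1}\jiao{\l, \eta}$, so the claim becomes: the coefficient of $t^0$ in $\jiao{\l,\eta(t)}$ vanishes, i.e. $\jiao{\l, \eta(0)} = 0$, given that $\eta \in \frg^*(\cO)$, $\Ad_{t^{-\l}}\eta \in \frg^*(\cO)$, and $\eta(0) = \eta \bmod t$ is a nilpotent element of $\frg^*$ (identifying $\frg^* \cong \frg$ as $G$-modules via an invariant form, $\eta(0)$ is a nilpotent element of $\frg$). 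Decompose $\frg = \bigoplus_{n} \frg_n$ by the $\ad \l$-grading; the condition $\Ad_{t^{-\l}}\eta \in \frg^*(\cO)$ forces $\eta(t) \in \bigoplus_n t^{\max(0,n)}\frg_n^*(\cO)$, hence $\eta(0) \in \bigoplus_{n \le 0}\frg_n^*$ (dualizing the grading, $\frg_n^* \cong \frg_{-n}$, so $\eta(0)$ lies in the \emph{nonnegative} part of the $\l$-grading on $\frg$). The pairing $\jiao{\l,\eta(0)}$ picks out the component of $\eta(0)$ in $\frg_0^* \cong \frg_0 = \frz_{\frg}(\l)$, paired against $\l$. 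The key point is then: a nilpotent element of $\frg$ lying in $\bigoplus_{n \ge 0}\frg_n$ has its $\frg_0$-component nilpotent in $\frz_\frg(\l)$; but more is true — by Jacobson–Morozov / the theory of the Kostant section, an element $X = X_0 + X_{>0}$ with $X_0 \in \frg_0$ semisimple forces $X_0$ to be such that $\ad X$ is nilpotent only if $X_0$ is nilpotent, and in particular $\jiao{\l, X_0} = 0$ since $\jiao{\l,-\rangle}$ is (proportional to) the grading element evaluated on a nilpotent, which must vanish. I would make this precise by observing: if $\eta(0)$ is nilpotent and $\ad(\eta(0))$ preserves the filtration $\bigoplus_{n \ge k}\frg_n$, then its $\frg_0$-graded piece acts nilpotently on each graded quotient, so $\eta(0)_0$ is nilpotent in $\frg_0$; and any invariant pairing of a nilpotent element against a cocharacter — equivalently, against the grading coweight — vanishes because on the $\mathfrak{sl}_2$-triple completing $\eta(0)_0$ the semisimple element $[\text{e},\text{f}]$ is $\ad$-semisimple with integer eigenvalues, forcing $\jiao{\l,\eta(0)_0}$ to be a sum over the weight-zero subspace which is traceless.

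The main obstacle I anticipate is the last step: correctly identifying which "positive/negative part" of the $\l$-grading the nilpotent $\eta(0)$ lives in after dualizing, and then extracting the vanishing of the degree-$0$ pairing cleanly. The cleanest route is probably to avoid $\mathfrak{sl}_2$-theory entirely and argue as follows: $\jiao{\l, \eta(0)}$ equals, under the identification $\frg \cong \frg^*$, the value $B(H_\l, N)$ where $H_\l \in \frt$ is dual to $\l$ and $N = \eta(0)$ is nilpotent; since $B(H_\l, N)$ is the trace (in any faithful representation, rescaled) of the product of a semisimple and a nilpotent element that lie in a common Borel (because $N \in \bigoplus_{n \ge 0}\frg_n$ is contained in the parabolic $\frp_\l = \bigoplus_{n\ge 0}\frg_n$, and $H_\l$ is in its Levi), and the nilpotent part of an element of a parabolic has zero trace against anything in the Levi — more simply, $B(H_\l, N) = \sum_n n \cdot B|_{\frg_n \times \frg_{-n}}(\ldots)$ but $N$ has no negative-graded component so only $n = 0$ survives, and $B(H_\l, N_0)$ with $N_0 \in \frg_0$ nilpotent vanishes since $H_\l$ is semisimple and nilpotent elements are orthogonal to semisimple ones under the Killing form within $\frg_0$ (as $\frg_0$ is reductive and $B|_{\frg_0}$ is nondegenerate with nilpotents in the orthogonal complement of the center, which contains $H_\l$). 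This is the step to write out with care; everything before it is bookkeeping.
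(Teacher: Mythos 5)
Your proposal is correct and follows essentially the same route as the paper's proof: reduce to $h(t)=t^{\l}$ via the Cartan decomposition and absorption of $G(\cO)$-factors, identify $\frg^{*}\simeq\frg$ by an invariant form, observe that the reduction $\eta(0)$ lies in the parabolic $\frp^{\mp}_{\l}$ so that only its (nilpotent) Levi component contributes to $\jiao{\l,\eta(0)}$, and conclude since $\l$ is central in that Levi. The paper's write-up is just a more condensed version of your "cleanest route" paragraph.
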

\begin{proof}
Since the inclusion \eqref{h eta} does not change if we multiply $h(t)$ on the left or on the right by an element in $G(\cO)$, we may assume $h(t)=t^{\l}$, for some $\l\in\L_{T}$. In this case, we reduce to showing
\begin{equation*}
\xymatrix{\jiao{\l, \eta}\in \fm_{\cO}, & \forall \eta\in \frg^{*}(\cO)\cap \Ad_{t^{\l}}\frg^{*}(\cO).
}
\end{equation*}

Using a Killing form on $\frg$, let us identify $\frg^{*}$ with $\frg$.
Let $\ov \eta$ denote the image of $\eta$ in the quotient $ \frg(\cO)/\frg(\fm_\cO) \simeq \frg$.  The condition $\eta\in \Ad_{t^{\l}}\frg(\cO)$ implies that $\ov \eta\in \frp^{-}_{\l}$, where $\frp^{-}_{\l}$ is the parabolic of $\frg$ containing $\frt=\Lie T$ with roots $\{\a|\jiao{\a,\l}\le0\}$. The Levi factor $\frl_{\l}$ of $\frp^{-}_{\l}$ has roots $\{\a|\jiao{\a,\l}=0\}$. Let $\pi:  \frp^{-}_{\l}\to \frl_{\l}$ be the projection. We have
\begin{equation*}
\xymatrix{           \ov{\jiao{\l, \eta}}=\jiao{\l, \ov\eta}=\jiao{\l, \pi(\ov\eta)}_{\frl_{\l}}
}
\end{equation*}
where the first term denotes the image of $\jiao{\l, \eta}\in\cO$ in the quotient $\cO/\fm_\cO \simeq \CC$. Since $\eta$ is nilpotent, so are $\ov\eta \in \frg$ and $\pi(\ov \eta)\in \frl_{\l}$. Since $\l$ is central in $\frl_{\l}$, we conclude that $\jiao{\l, \pi(\ov\eta)}_{\frl_{\l}}=0$, hence $\ov{\jiao{\l, \eta}}=0$ and $\jiao{\l,\eta}\in \fm_{\cO}$. This finishes the proof.
\end{proof}

The proof of Theorem \ref{th:ss} is now complete.
\end{proof}

\section{Betti spectral action}\label{s: variations}


\subsection{Multiple Hecke modifications}
Theorem \ref{th:ss} formally extends to the case of Hecke modifications at multiple points. More precisely,  consider the iterated Hecke stack
\begin{equation*}
\xymatrix{         \Hecke_{G}(X^{n})=\Hecke_{G}(X)\times_{\Bun_{G}(X)}\Hecke_{G}(X)\times_{\Bun_{G}(X)}\cdots\times_{\Bun_{G}(X)}\Hecke_{G}(X)
}
\end{equation*}
formed using $n$ factors of $\Hecke_{G}(X)$, where the fiber product is formed using $p_{-}$ to map $\Hecke_{G}(X)$ to the copy of $\Bun_{G}(X)$ to the left of it and using $p_{+}$ to map to the copy of $\Bun_{G}(X)$ to the right of it. 
We have maps
\begin{equation}\label{eq:Hk n proj}
\xymatrix{\Bun_{G}(X) & \Hecke_{G}(X^{n}) \ar[rr]^{p_{n,+}\times\pi_{n}}\ar[l]_-{p_{n,-}} & &   \Bun_{G}(X)\times X^{n}      
}
\end{equation}
where $p_{n,-}$ is the $p_{-}$ from the first factor of $\Hecke_{G}(X)$ and $p_{n,+}$ is the $p_{+}$ from the last one.

As in the discussion in Section~\ref{sss:Sat kernel}, in the case of the iterated Hecke stack, we have a diagram
\begin{equation*}
\xymatrix{    \Hecke_{G}(X^{n})  &     \Hecke_{G}(X^{n})\times_{X^{n}}\Coord^{0}(X)^{n}\ar[r]^-{p_{n}}\ar[l]_-{q_{n}} & (G(\cO)\bs\Gr_{G})^{n}
}
\end{equation*}
with $q_{n}$ an $\Aut^{0}(\cO)^{n}$-torsor. Let
\begin{equation*}
\Sat^{0}_{G}(n):=\Sh_{c}(\Aut^{0}(\cO)^{n}\ltimes G(\cO)^{n}\bs \Gr^{n}_{G}, E)
\end{equation*}
Then for any object $\cV\in \Sat^{0}_{G}(n)$, using its $\Aut^{0}(\cO)^{n}$-equivariant structure, its pullback $p_{n}^{*}\cV$ to $\Hecke_{G}(X^{n})\times_{X^{n}}\Coord^{0}(X)^{n}$ descends to a complex $\cV'$ on $\Hecke_{G}(X^{n})$, which can be used to define a Hecke functor
\begin{equation*}
\xymatrix{        H_{n, \cV}:  \Sh(\Bun_{G}(X), E) \ar[r] & \Sh(\Bun_{G}(X)\times X^{n}, E)\\
}
\end{equation*}
\begin{equation*}
\xymatrix{          H_{n,\cV}(\cF)=(p_{n,+}\times \pi_{n})_{!}(\cV'\otimes p_{n,-}^{*}\cF).
}
\end{equation*}

\begin{theorem}\label{th:mult ss} 
For any kernel $\cV\in\Sat^{0}_{G}(n)$, the Hecke functor $H_{n,\cV}$ preserves nilpotent singular support, and for sheaves with nilpotent singular support,
it does not introduce non-zero singular codirections along the curve. In other words, for $\cF\in \Sh(\Bun_{G}(X), E)$, 
\begin{equation*}
\xymatrix{
\sing(\calF) \subset \calN_G(X) \implies 
\sing(H_{n,\cV}(\calF)) \subset \calN_G(X) \times X^{n} 
}
\end{equation*}
where $X^{n}\subset T^*X^{n}$ denotes the zero-section.
\end{theorem}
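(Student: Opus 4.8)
The strategy is to reduce Theorem~\ref{th:mult ss} to the case $n=1$ already established in Theorem~\ref{th:ss}, by an induction on $n$ that factors the iterated Hecke correspondence through the one-step correspondence. First I would observe that, just as in Lemma~\ref{l:gen Sat} (in its evident $n$-fold version), every object of $\Sat^{0}_{G}(n)$ is a finite complex built from external products $\cV_{1}\boxtimes\cdots\boxtimes\cV_{n}$ of objects of $\Sat^{0}_{G}$, and indeed from the pushforwards $\nu^{\l}_{!}\un E_{\wt\Gr^{\l}_{G}}$ along resolutions; since singular support is subadditive under the operations (cones, pushforward, pullback, tensor) out of which $H_{n,\cV}$ is assembled, it suffices to treat $\cV=\cV_{1}\boxtimes\cdots\boxtimes\cV_{n}$, and by the resolution argument of Lemma~\ref{l:claim to thm} it suffices to take each $\cV_{i}=\nu^{\l_{i}}_{!}\un E_{\wt\Gr^{\l_{i}}_{G}}$. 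For such a kernel the functor $H_{n,\cV}$ factors as an iterated composition: up to permuting and relabeling the curve factors, $H_{n,\cV}(\cF)$ is obtained by applying $\wt H^{\l_{1}}$ to $\cF$, then applying (a partial-variable version of) $\wt H^{\l_{2}}$ to the output relative to the already-produced copy of $X$, and so on. Concretely, the resolved iterated Hecke stack $\wt\Hecke^{\l_{1},\dots,\l_{n}}_{G}(X)$ maps by a smooth-then-proper correspondence to $\Bun\times X^{n}$, and it is cut out as a fiber product of the one-step resolved Hecke stacks over intermediate copies of $\Bun$.

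Second, I would run the induction. Suppose the result is known for $n-1$ modifications, so that for $\cF$ with $\sing(\cF)\subset\calN_{G}(X)$ we have $\sing(H_{n-1,\cV'}(\cF))\subset\calN_{G}(X)\times X^{n-1}$ for any $\cV'\in\Sat^{0}_{G}(n-1)$, with the zero-codirection along each of the $n-1$ curve factors. The composite producing $H_{n,\cV}$ applies a one-step Hecke functor $\wt H^{\l_{n}}$ to $H_{n-1,\cV'}(\cF)$, now regarded as a sheaf on $\Bun\times X^{n-1}$, with the modification point being a new, $n$-th copy of $X$. The content of Theorem~\ref{th:ss} is local on $\Bun$ in the modification variable and does not interact with the spectator directions: the Hecke correspondence for the $n$-th modification is $\Hecke_{G}(X)\times X^{n-1}\rightrightarrows\Bun\times X^{n-1}$ (spectator $X^{n-1}$), and the singular-support estimate \eqref{eq: sing under res hecke}, applied with the spectator factors carried along trivially, shows that any singular codirection of the output along $\Bun$ lies in $\calN_{G}(X)$, any codirection along $X^{n}$ (the new factor) vanishes, and codirections along $X^{1},\dots,X^{n-1}$ are unchanged — hence remain zero by the inductive hypothesis. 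More precisely one repeats the unwinding in the proof of the Claim: a point of the output singular support comes from a Hecke triple $(\cE_{-},\cE_{+},\alpha)$ over some $x_{n}\in X$ together with covectors $\phi_{-}$ (nilpotent, with zero $X^{1},\dots,X^{n-1}$-components by induction), $\phi_{+}$, $\theta_{n}\in T^{*}_{x_{n}}X$, and covectors $\theta_{1},\dots,\theta_{n-1}$ along the spectator factors, satisfying $dp^{\l_{n}}_{-}(\phi_{-})=dp^{\l_{n}}_{+}(\phi_{+})+d\pi_{n}(\theta_{n})$ with $\theta_{1},\dots,\theta_{n-1}$ pulled back identically on both sides; the one-variable argument (restriction away from $x_{n}$, then Lemma~\ref{lem: null pairing}) forces $\phi_{+}$ nilpotent and $\theta_{n}=0$, while $\theta_{1},\dots,\theta_{n-1}$ are untouched and so remain $0$.

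Third, there is a diagonal subtlety to address: the points $x_{1},\dots,x_{n}\in X$ at which the successive modifications occur need not be distinct, and $\Hecke_{G}(X^{n})$ is genuinely singular along the partial diagonals in $X^{n}$ where the modifications collide. I would handle this by working throughout with the resolved iterated Hecke stack $\wt\Hecke^{\l_{1},\dots,\l_{n}}_{G}(X)$, which is smooth (being an iterated fibration of the globalized smooth resolutions $\wt\Gr^{\l_{i}}_{G,X}$ over the $\calG^{\cO}_{X}$-torsors $\wt\Bun_{G}(X)_{X}$), and observing that the fibration property of $r^{\l_{i}}$ over the smooth strata $\Hecke^{\mu_{i}}_{G}(X)$ used in Lemma~\ref{l:claim to thm} still lets us pull back the relevant covector equation to the resolution, independently of whether the modification points coincide — the tangent-space computation in the proof of the Claim takes place fiberwise over a fixed tuple of points and a fixed tuple of bundles, and is insensitive to collisions. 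The main obstacle, and where care is genuinely required rather than routine, is exactly this bookkeeping: checking that the spectator curve directions really do pass through every step without picking up singular codirections, i.e. that the one-step estimate is uniform in (and trivial along) the spectator parameters. Once that is verified the induction closes and the general kernel case follows from the reduction in the first paragraph, completing the proof.
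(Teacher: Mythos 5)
Your proposal is correct and follows essentially the same route as the paper: reduce via the $n$-fold analogue of Lemma~\ref{l:gen Sat} to product kernels indexed by tuples $\un\l$, then induct on $n$ by realizing the last modification as a one-step Hecke correspondence with spectator factor $X^{n-1}$, so that the relevant Lagrangian correspondence is just $\sing^{\l_n}\times X^{n-1}$ and the Claim from the proof of Theorem~\ref{th:ss} applies. The ``diagonal subtlety'' you raise in your third paragraph is in fact moot in this inductive formulation, since the $n$-th correspondence is a genuine product $\Hecke^{\l_n}_G(X)\times X^{n-1}\rightrightarrows \Bun_G(X)\times X^{n-1}$ and the previous output enters only as a sheaf on $\Bun_G(X)\times X^{n-1}$ with known singular support, exactly as you conclude.
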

\begin{proof} 
An analogue of Lemma \ref{l:gen Sat} for $\Sat^{0}_{G}(n)$ implies that it suffices to prove the theorem for $\cV=\cV^{\un\l}$ the constant sheaf (extended by zero) on $\Gr^{\l_{1}}_{G}\times\Gr^{\l_{n}}_{G}$, for any sequence $\un\l=(\l_{1},\cdots,\l_{n})$ of dominant coweights of $G$. Consider the substack of $\Hecke_{G}(X^{n})$
\begin{equation*}
\Hecke^{\un\l}=\Hecke^{\l_{1}}_{G}(X)\times_{\Bun_{G}(X)}\Hecke^{\l_{1}}_{G}(X)\times_{\Bun_{G}(X)}\cdots\times_{\Bun_{G}(X)}\Hecke^{\l_{n}}_{G}(X).
\end{equation*}
We have a diagram
\begin{equation*}
\xymatrix{\Bun_{G}(X) & \ar[l]_-{p^{\un\l}_-} \Hecke^{\un\l}_{G}(X) \ar[r]^-{p^{\un\l}_+\times \pi_{n}} &   \Bun_{G}(X) \times X^{n} }
\end{equation*}
restricted from \eqref{eq:Hk n proj}. We have
\begin{equation*}
H_{n,\cV^{\un\l}} (\calF)\simeq H^{\un\l}(\cF):= (p_+^{\un\l}\times \pi_{n})_! (p^{\un\l}_{-})^*\calF.
\end{equation*}
We only need to show that the statement holds for $H^{\un\l}$.

We argue by induction on $n$. The case $n=1$ is Theorem \ref{th:ss}. Suppose the case $n-1$ is proved. Let $\un\l'=(\l_{1},\cdots, \l_{n-1})$ and $\cF\in \Sh_{\cN_{G}(X)}(\Bun_{G}(X), E)$. By the inductive hypothesis, $\sing(H^{\un\l'}(\cF))\subset \cN_{G}(X)\times X^{n-1}$. We have a diagram (we again use the abbreviated notation $\Bun:=\Bun_{G}(X)$, etc.)
\begin{equation*}
\xymatrix{\Bun\times X^{n-1} & \Hecke^{\l_{n}}\times X^{n-1}\ar[l]_-{p^{\l_{n}}_{-,n-1}} \ar[rr]^-{p^{\l_{n}}_{+,n-1}\times\pi} && \Bun\times X^{n-1}\times X}
\end{equation*}
given by simply taking the product of the diagram \eqref{eq:Hk lam} with $X^{n-1}$. Here $p^{\l_{n}}_{\pm,n-1}=p^{\l_{n}}_{\pm}\times\id_{X^{n-1}}: \Hecke^{\l_{n}}\times X^{n-1}\to \Bun\times X^{n-1}$. By proper base change, we have
\begin{equation*}
H^{\un\l}(\cF)=(p^{\l_{n}}_{+,n-1}\times \pi)_{!}(p^{\l_{n}}_{-,n-1})^{*}H^{\un\l'}(\cF).
\end{equation*}

The same argument as Lemma \ref{l:claim to thm} shows that it suffices to prove the naive estimate 
\begin{equation*}
\xymatrix{
(p^{\l_{n}}_{+,n-1}\times \pi)_{\nat}(d(p^{\l_{n}}_{+,n-1} \times \pi))^{-1}(dp^{\l_{n}}_{-,n-1})(p^{\l_{n}}_{-,n-1})^{-1}_{\nat} \calN_G(X) \subset \calN_G(X) \times X^{n}.
}
\end{equation*}
However, the left hand side above is exactly $\sing^{\l_{n}}\times X^{n-1}\subset T^{*}(\Bun\times X)\times T^{*}X^{n-1}$, and the above inclusion follows from the Claim preceding Lemma \ref{l:claim to thm}.
\end{proof}

\begin{remark}\label{r:RepGn action} Let $E$ be a perfect field.  Recall $\Rep(\dG_{E})$ is the abelian category of finite-dimensional $E$-representations of $\dG_{E}$. There is a natural map 
\begin{equation*}
\xymatrix{\tau'_{n}: \prod_{i=1}^{n}\Rep(\dG_{E})\simeq \prod_{i=1}^{n}\Sat^{0,\hs}_{G}\ar[r] & \Sat^{0}_{G}(n)^{\hs}}
\end{equation*}
given by external tensor product. This functor is exact in each factor. Recall Deligne's definition of the tensor product of a finite collection of $E$-linear abelian categories with finite dimensional Hom spaces and objects of finite lengths \cite[5.1, Prop. 5.13(i)]{Del-tensor}. By  \cite[Prop. 5.13(vi)]{Del-tensor}, the functor $\tau'_{n}$ canonically extends to an exact functor of the $n$-fold tensor product of the abelian category $\Rep(\dG_{E})$. By \cite[Lemme 5.21 (special case of 5.18)]{Del-tensor}, the $n$-fold tensor product of $\Rep(\dG_{E})$ is identified with $\Rep((\dG_{E})^{n})$ as a tensor category. Since $\tau'_{n}$ is tensor in each factor, the resulting functor
\begin{equation*}
\xymatrix{\tau_{n}: \Rep((\dG_{E})^{n})\ar[r] & \Sat^{0}_{G}(n)^{\hs}}
\end{equation*}
is also a tensor functor. Therefore, for each $V\in \Rep((\dG_{E})^{n})$, we have a Hecke functor 
\begin{equation*}
\xymatrix{H_{n,V}:  \Sh(\Bun_{G}(X), E) \ar[r] & \Sh(\Bun_{G}(X)\times X^{n}, E)}
\end{equation*}
defined as $H_{n,\cV}$ for $\cV=\tau_{n}(V)$.
\end{remark}


\subsection{Level structure}

We state here a generalization of Theorem~\ref{th:ss} incorporating level structure.

Recall for a point $x\in X$, 
we write $\cO_{x}$ for the completed local ring at $x$, with maximal ideal $\fm_{x}$, and
fraction field $\cK_{x}$.

A level structure at $x\in X$ is by definition a subgroup scheme $\KK_{x}\subset G(\cK_{x})$ that contains a congruence subgroup $G(\cO_{x})_{N}=\ker(G(\cO_{x})\to G(\cO_{x}/\fm_{x}^{N}))$, for some $N$, as a normal subgroup, and is contained in the normalizer of a maximal parahoric subgroup of $G(\cK_{x})$. 

\begin{ex}
A favorite example is $\KK_{x} \subset G(\cK_x)$ an Iwahori, or more generally a parahoric subgroup.
\end{ex}


Let $S\subset X$ be a finite subset, and 
set $U=X\setminus S$.
 Let $\LL_{S}=(\KK_{x})_{x\in S}$ denote the choice of a level structure for each $x\in S$. 
If each $K_{x}$ is a congruence subgroup $G(\cO_{x})_{N_{x}}$, the moduli stack $\Bun_{G}(\LL_{S})$ of  $G$-bundles with $\LL_{S}$-level structures classifies $(\cE, \tau)$ where $\cE$ is a $G$-bundle over $X$ and $\tau_{x}$ is a trivialization of $\cE$ along the divisor $\sum_{x\in S}N_{x}\cdot x$. For general level structures $\LL_{S}=(\KK_{x})_{x\in S}$, the moduli stack $\Bun(\LL_S)$ of $G$-bundles with $\LL_S$-level structures is defined as follows. For each $x\in S$, pick a congruence subgroup $G(\cO_{x})_{N_{x}}$ which is normal in $\KK_{x}$. Let $\LL^{\sharp}_{S}=\{G(\cO_{x})_{N_{x}}\}_{x\in S}$ and define $\Bun_{G}(\LL_{S})$ to be the quotient of $\Bun_{G}(\LL^{\sharp}_{S})$ by $\prod_{x\in S}\KK_{x}/G(\cO_{x})_{N_{x}}$. It is easy to see that $\Bun_{G}(\LL_{S})$ is independent of the choice of $\{N_{x}\}_{x\in S}$.

Let $T^*\Bun(\LL_S)$ denote the classical cotangent bundle of $\Bun(\LL_S)$. Its fiber at $\calE\in \Bun(\LL_S)$ is the space
of Higgs fields given by the short exact sequence
\begin{equation*}
\xymatrix{
T_\cE^*\Bun(\LL_S)\ar@{^(->}[r] & H^0(U, \frg^*_{\calE|_U} \otimes\om_{X})
\ar[r] & \oplus_{x\in S} \Lie(\KK_x)^* dt_x
}
\end{equation*}
where $t_x\in \fm_x$ denotes a coordinate.

Thus it makes sense to say whether a point of
$T^*\Bun(\LL_S)$ is nilpotent by asking for its generic values to be nilpotent.
We will write $\cN(\LL_S)\subset T^*\Bun(\LL_S)$ for this global nilpotent cone.

Let $ \Sh(\Bun(\LL_S), E)$ denote the dg category of all complexes on $\Bun(\LL_S)$.

Using the Hecke correspondence over $U$
\begin{equation*}
\xymatrix{
\Bun(\LL_{S})  & \ar[l]_-{p^{U}_-} \Hecke(\LL_{S})_{U} \ar[r]^-{p^{U}_+\times \pi_U} &   \Bun(\LL_{S}) \times U  \\
 }
\end{equation*}
we can define Hecke functors indexed by $\cV\in\Sat^{0}_{G}$
\begin{equation*}
\xymatrix{
H^{U}_{\cV}:\Sh(\Bun(\LL_{S}),E) \ar[r] & \Sh(\Bun(\LL_{S}) \times U, E) 
&
H^{U}_{\cV}(\calF)\simeq  (p^{U}_+\times \pi_U)_! (\cV'_{U}\otimes (p^{U}_{-})^{*}\calF )
}
\end{equation*}
where $\cV'_{U}$ is the spread-out of $\cV$ to $\Hecke^\l(\LL_{S})_{U}$, defined using a similar procedure as in Section~\ref{sss:Sat kernel}.

More generally, for any positive integer $n$, we have Hecke modifications at $n$ points indexed by $\cV\in\Sat^{0}_{G}(n)$
\begin{equation*}
\xymatrix{
H^{U}_{n,\cV}:\Sh(\Bun(\LL_{S}),E) \ar[r] & \Sh(\Bun(\LL_{S}) \times U^{n}, E). 
}
\end{equation*}

The following generalization of Theorem \ref{th:mult ss} can be deduced by a verbatim repeat of its proof.

\begin{theorem}\label{th:ss with ls} 
For any $\cV\in \Sat^{0}_{G}(n)$, the Hecke functor $H^{U}_{n,\cV}$ preserves nilpotent singular support, and for sheaves with nilpotent singular support, it does not introduce non-zero singular codirections along the curve. In other words, for $\cF\in \Sh(\Bun(\LL_{S}), E)$, 
\begin{equation*}
\xymatrix{
\sing(\calF) \subset \calN(\LL_S)\implies 
\sing(H^{U}_{n,\cV}(\calF)) \subset \calN(\LL_S) \times U^{n}.
}
\end{equation*}
where $U^{n}\subset T^*U^{n}$ denotes the zero-section.
\end{theorem}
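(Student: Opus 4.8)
The plan is to transcribe the proof of Theorem~\ref{th:mult ss} essentially verbatim, the only genuinely new work being a careful set-up of the cotangent geometry of $\Bun(\LL_{S})$ and of the restricted Hecke stack $\Hecke^{\l}(\LL_{S})_{U}$ over the open curve $U = X\setminus S$. First, an analogue of Lemma~\ref{l:gen Sat} for $\Sat^{0}_{G}(n)$ (built, as in Remark~\ref{r:RepGn action}, from the one-point statement) reduces us to kernels $\cV = \cV^{\un\l}$, the constant sheaves extended by zero on $\Gr^{\l_{1}}_{G}\times\cdots\times\Gr^{\l_{n}}_{G}$ for $\un\l = (\l_{1},\dots,\l_{n})$ dominant. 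Then the induction on $n$ of Theorem~\ref{th:mult ss} applies unchanged: using proper base change to peel off the last modification point, one reduces the estimate for $H^{U}_{n,\cV^{\un\l}}$ to the $n=1$ case together with the inductive hypothesis, and the argument of Lemma~\ref{l:claim to thm} (using the resolved Hecke stacks $\wt\Hecke^{\l}(\LL_{S})_{U}$ and the fibration property of $r^{\l}$) shows it suffices to prove the level-structure analogue of the Claim preceding Lemma~\ref{l:claim to thm}: the conic Lagrangian $(p^{U,\l}_{+}\times\pi_{U})_{\nat}(d(p^{U,\l}_{+}\times\pi_{U}))^{-1}(dp^{U,\l}_{-})(p^{U,\l}_{-})^{-1}_{\nat}\cN(\LL_{S})$ is contained in $\cN(\LL_{S})\times U$.

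Second, I would run the local-to-global analysis over $U$. Fix $x\in U$, bundles-with-level-structure $\cE_{-},\cE_{+}\in\Bun(\LL_{S})$, covectors $\theta\in T^{*}_{x}U$ and $\phi_{\pm}\in T^{*}_{\cE_{\pm}}\Bun(\LL_{S})$, and an isomorphism $\alpha\colon\cE_{-}|_{X\setminus x}\isom\cE_{+}|_{X\setminus x}$ of relative position $\l$ at $x$; since $x\notin S$, the restriction of $\alpha$ to a neighbourhood of $S$ automatically identifies the level structures at $S$, so $\alpha$ is a genuine point of $\Hecke^{\l}(\LL_{S})_{U}$. Using the short exact sequence for $T^{*}\Bun(\LL_{S})$ recorded above (with $\oplus_{y\in S}\Lie(\KK_{y})^{*}dt_{y}$ on the right) and the coherent sheaf $\frg^{*}_{\cE_{-}\vee\cE_{+}}$ glued by $\alpha$ away from $x$, the cotangent complex of the fibre $\Hecke^{\l}(\LL_{S})_{U}|_{x}$ at $(\cE_{-},\cE_{+},\alpha)$ is the evident level-structure analogue of $\Gamma(X,\frg^{*}_{\cE_{-}\vee\cE_{+}}\otimes\om_{X})$ (sections over $U$ with the common $\LL_{S}$-constraint at $S$), and the identical diagram chase on $T^{*}_{x}U\to T^{*}_{(x,\cE_{-},\cE_{+},\alpha)}\Hecke^{\l}(\LL_{S})_{U}\to H^{0}(U,\frg^{*}_{\cE_{-}\vee\cE_{+}}\otimes\om_{X})^{\LL_{S}}$ shows that the covector equation forces $\phi_{-}|_{U\setminus x}=\phi_{+}|_{U\setminus x}$; in particular $\phi_{-}$ nilpotent implies $\phi_{+}$ nilpotent, and it remains to prove $\theta=0$ when $\phi_{-}$ is nilpotent.

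Third, for $\theta=0$ I would reduce to $G$ semisimple exactly as in the proof of Theorem~\ref{th:ss}, passing to $G^{\ad}$ with level structures $\KK^{\ad}_{y}$ the images of $\KK_{y}$ and using that a nilpotent Higgs field already lies in the $\frg^{\ad}$-valued sections; then apply the Grassmannian uniformization of $\Bun(\LL_{S})$ in a neighbourhood of the moving point $x\in U$. The point is that the level data at $S$ enters only the "global" gluing over $X\setminus x$ and is untouched by $\Aut(\cO_{x})$, so the analogue of Lemma~\ref{l:Aut inv}—that the uniformization map factors through $U_{\dR}$—holds verbatim, and the moment-map formula of Proposition~\ref{p:diff Gr BunG} applies unchanged at $x$. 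Writing $\psi=\phi_{-}|_{D^{\times}_{x}}=\phi_{+}|_{D^{\times}_{x}}$, the computation $\mu_{g_{+}(t)G(\cO)}(\psi)-\mu_{g_{-}(t)G(\cO)}(\psi)=\jiao{h'(t)h(t)^{-1},\Ad_{g_{-}(t)^{-1}}\psi}$, with $h(t)$ the relative position and $\eta$ as in \eqref{range eta}, combined with Lemma~\ref{lem: null pairing}, yields $\theta=0$, completing the Claim and hence the theorem.

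I expect no new geometric obstacle: the whole local residue argument at $x$ is insensitive to $\LL_{S}$ precisely because the modification point lives in $U$ while the level structures sit at the disjoint set $S$. The one thing that needs genuine care is the foundational bookkeeping—constructing $\Bun(\LL_{S})$ and $\Hecke^{\l}(\LL_{S})_{U}$ (first with congruence level, then taking the quotient by $\KK_{x}/G(\cO_{x})_{N}$), computing their cotangent complexes compatibly with the short exact sequence for $T^{*}\Bun(\LL_{S})$, and checking that pullback of Higgs fields along $p^{U,\l}_{\pm}$ is the inclusion induced by $\frg^{*}_{\cE_{\pm}}\hookrightarrow\frg^{*}_{\cE_{-}\vee\cE_{+}}$ even in the presence of the level constraints at $S$. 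Once that is in place, the proof is literally the proof of Theorem~\ref{th:mult ss}.
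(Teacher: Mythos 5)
Your proposal matches the paper exactly: the paper's entire proof of Theorem~\ref{th:ss with ls} is the single sentence that it ``can be deduced by a verbatim repeat'' of the proof of Theorem~\ref{th:mult ss}, and you have correctly spelled out what that verbatim repeat amounts to. The key observation you make explicit---that the modification point $x$ lives in $U$, disjoint from $S$, so the level data at $S$ is carried along as inert global gluing data and never enters the local moment-map computation at $x$---is precisely why the argument transports unchanged. Your bookkeeping of the short exact sequence for $T^{*}\Bun(\LL_{S})$, the sheaf $\frg^{*}_{\cE_{-}\vee\cE_{+}}$, the reduction to $G^{\ad}$ (a nilpotent Higgs field takes values in $\frg^{\ad,*}$ even at the level-structure points, so that reduction still works), and the Grassmannian uniformization at $x\in U$ (Drinfeld--Simpson still trivializes bundles over $X\setminus x$ for $G$ semisimple, with the level structure at $S\subset X\setminus x$ riding along) are all in order.
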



\subsection{Spectral action}\label{ss: action}

We record here our main application of Theorem~\ref{th:ss} and its generalization Theorem~\ref{th:ss with ls}.

In this subsection, let $E$ be a field of characteristic zero. Let $\dG_{E}$ be the base change of $\dG$ to $E$. We shall use the notation set out in Section ~\ref{ss:spec notation}. Recall the geometric Satake equivalence~\cite[(1.1)]{MV}
 \begin{equation*}
\xymatrix{
\Sat^\heartsuit_G \simeq \Rep(\dG_{E}).
}
\end{equation*}

\subsubsection{Unramified case}

Recall by Theorem~\ref{th:ss},
the Hecke action on $ \Sh (\Bun_G(X), E)$, at any point $x\in X$, preserves the full subcategory
$ \Sh_{\calN_G(X)} (\Bun_G(X), E)$. Restricting to kernels in $\Sat^\heartsuit_G$ provides a  $\Rep(\dG_{E})$-action 
on $ \Sh_{\calN_G(X)} (\Bun_G(X), E)$.


\begin{prop}\label{p: chiral}
Let $S$ be a topological space equipped with a deformation retraction to a point $s_0\in S$.
Given any map $f:S\to X$,  the
$\Rep(\dG_{E})$-action on $ \Sh_{\calN_G(X)} (\Bun_G(X), E)$ at any point $f(s)\in X$ is canonically isomorphic to that at  $f(s_0)\in X$. More generally, 
 the
$\Rep(\dG_{E})^{\otimes n}$-action on $ \Sh_{\calN_G(X)} (\Bun_G(X), E)$ at any collection of points $f(s_1), \ldots, f(s_n)\in X$ is canonically isomorphic to the tensor product $\Rep(\dG_{E})^{\otimes n} \to \Rep(\dG_{E})$ followed by the $\Rep(\dG_{E})$-action at  $f(s_0)\in X$.

\end{prop}

\begin{proof}
We will give details for the first assertion  of the proposition for $S = \RR, s_0 = 0$; the general case and more general assertion for multiple points
are similar (using Theorem~\ref{th:mult ss} in place of Theorem~\ref{th:ss}).

Thus for any path $\gamma:\RR\to X$,  we seek to show the 
$\Rep(\dG_{E})$-action on $ \Sh_{\calN_G(X)} (\Bun_G(X), E)$ at any point $\gamma(t)\in X $  is canonically isomorphic to that at $\gamma(0)\in X $. 

For any object
$\cF\in \Sh_{\calN_G(X)} (\Bun_G(X), E),$ and kernel
$\cV\in \Sat^\heartsuit_G$,
we have by Theorem~\ref{th:ss}
\begin{equation*}
\xymatrix{
H_\cV(\cF) \in  \Sh_{\calN_G(X) \times X} (\Bun_G(X)\times X, E).
}
\end{equation*}
Restricting along $\gamma$ we get
\begin{equation*}
\xymatrix{(\id\times\gamma)^{*}H_\cV(\cF) \in  \Sh_{\cN_G(X) \times \RR} (\Bun_{G}(X)\times \RR, E)}
\end{equation*}
whose further restriction to each $\Bun_{G}(X)\times\{t\}$ is the Hecke modification of $\cF$ at $\gamma(t)$ by the kernel $\cV$. Thus it suffices to show that for any $t\in \RR$, the restriction functor
\begin{equation}\label{eq:res t}
\xymatrix{
\res_{t}: \Sh_{\cN_{G}(X)\times\RR}(\Bun_{G}(X)\times\RR, E)\ar[r] & \Sh_{\cN_{G}(X)\times\{t\}}(\Bun_{G}(X) \times \{t\}, E)=\Sh_{\cN_{G}(X)}(\Bun_{G}(X), E)}
\end{equation} 
is an equivalence, and that for $t\in \RR$, the functor $\res^{-1}_{0}\circ\res_{t}$ is canonically isomorphic to the identity functor of $\Sh_{\cN_{G}(X)}(\Bun_{G}(X), E)$.

As discussed in Section~\ref{ss: sing stacks},
for any smooth scheme with smooth map $u:U \to \Bun_G(X)$, 
with induced correspondence
\begin{equation*}
\xymatrix{
T^*U & \ar[l]_-{du} T^*\Bun_G(X) \times_{\Bun_G(X)} U \ar[r]^-{u_{\nat}} & T^*\Bun_G(X)
}
\end{equation*}
we have the induced nilpotent Lagrangian 
\begin{equation*}
\xymatrix{
\cN_U = du(u_{\nat}^{-1}(\cN_G(X)) \subset T^*U.
}
\end{equation*}
By \cite[Cor. 8.3.22]{KS}, we may choose a $\mu$-stratification $\cS = \{U_\alpha\}_{\alpha\in A}$ of $U$ such that $\cN_U\subset T^*_\cS U = \cup_{\alpha \in A} T^*_{U_\alpha} U$. Then $(u\times \gamma)^*H_\cV(\cF)$ is locally constant along the stratification
 $\cS\times \RR = \{U_\alpha \times \RR\}_{\alpha\in A}$  of $U\times \RR$ by \cite[Prop. 8.4.1]{KS}. Thus for any $t\in \RR$, the restriction functor  
\begin{equation*}
\xymatrix{\res_{U,\cS, t}:\Sh_{\cS\times\RR}(U\times\RR, E)\ar[r] & \Sh_{\cS\times\{t\}}(U \times \{t\}, E)
}
\end{equation*}
is an equivalence such that $\res^{-1}_{U,\cS,0}\circ \res_{U,\cS,t}$ is canonically isomorphic to the identity functor of $\Sh_{\cS}(U, E)$. Restricting to the full subcategories by imposing nilpotent singular support conditions, the restriction functor
\begin{equation*}
\xymatrix{
\res_{U, t}: \Sh_{\cN_{U}\times\RR}(U\times\RR, E)\ar[r] & \Sh_{\cN_{U}\times\{t\}}(U \times \{t\}, E)}
\end{equation*}
is also an equivalence such that $\res^{-1}_{U,0}\circ \res_{U,t}$ is canonically isomorphic to the identity. Since the restriction functors $\res_{U,t}$ are functorial in the map $u:U \to \Bun_G(X)$,  we conclude that $\res_{t}$ in \eqref{eq:res t} is an equivalence as desired, and that $\res^{-1}_{0}\circ\res_{t}$ is canonically isomorphic to the identity.  
\end{proof}

Recall that $\Rep(\dG_{E})\simeq \Perf(B\dG_{E})^\heartsuit $.
Let $\pi:\dG_E/\dG_E\to B\dG_E$ denote the natural projection from the adjoint quotient, and $e:B\dG_E\to \dG_E/\dG_E$ the identity section. Since $\pi$ is affine, we have the  equivalence
\begin{equation}\label{eq: ascent}
\xymatrix{
\pi_*:\QCoh(\dG_E/\dG_{E}) \ar[r]^-\sim & \Mod_{\cO_G}\QCoh(B\dG_{E})
}
\end{equation} 
 where  $\Mod_{\cO_G}\QCoh(B\dG_{E})$ denotes module objects for the algebra object
$\cO_G = \pi_*\cO_{\dG_{E}/\dG_E} = \oplus_{\lambda\in \Lambda^+} V_\lambda^\vee \otimes V_\lambda$ of functions on the group.

The following is a well-known consequence of the structure produced in Proposition~\ref{p: chiral}. 
We will give a  proof focusing on the key structures; see Remark~\ref{rem: scientific} immediately after for a more scientific approach.

\begin{prop}\label{p: tensor auts}
Given a loop $\gamma:S^1\to X$ based at a point $x_0\in X$, the $\Perf(B\dG_{E})$-action on $ \Sh_{\calN_G(X)} (\Bun_G(X), E)$ at the point $x_0\in X$
canonically extends along $\pi^*:\Perf(B\dG_{E}) \to \Perf(\dG_{E}/\dG_E)$
to a $\Perf(\dG_{E}/\dG_E)$-action. 

Moreover, given an extension of $\gamma$ to a disk $\overline \gamma:D^2\to X$, the 
 $\Perf(\dG_{E}/\dG_E)$-action canonically factors through $e^*: \Perf(\dG_{E}/\dG_E)\to \Perf(B\dG_{E})$
 followed by the original $\Perf(B\dG_E)$-action. 
\end{prop}

\begin{proof}
Consider the universal cover $p:\RR\to S^1 = \RR/\ZZ$ and the lift $\tilde \gamma = \gamma\circ p:\RR\to X$.
By Proposition~\ref{p: chiral},  the $\Perf(B\dG_{E})^{\otimes n}$-action on $ \Sh_{\calN_G(X)} (\Bun_G(X), E)$  at any collection of points $\tilde \gamma(t_1), \ldots, \tilde \gamma(t_n) \in X$
is canonically isomorphic to the tensor product  $\Perf(B\dG_{E})^{\otimes n}\to  \Perf(B\dG_{E})$ followed by the $\Perf(B\dG_{E})$-action
at $\tilde \gamma(0) =x_0$.  

Let us leave aside the symmetric monoidal structure of  $\Perf(B\dG_{E})$ for the moment and regard it as a plain monoidal category. 
By Proposition~\ref{p: chiral}, translation along $\RR$ by an integral amount provides a canonical monodromy automorphism  $m$ of the monoidal action:  automorphisms 
$m_\cV:H_{\cV, x_0}\to H_{\cV, x_0}$, for $\cV\in \Perf(B\dG_{E})$,  along with equivalences $m_{\cV_1\otimes \cV_2} \simeq m_{\cV_1} \otimes m_{\cV_2}$
and evident associativity coherences.

Let us use the monodromy automorphism  $m$ to show the Hecke functors $H_{\cV, x_0}$, for $\cV\in \Perf(B\dG_{E})$,
factor through $\pi^*:\Perf(B\dG_{E}) \to \Perf(\dG_{E}/\dG_E)$. For this, it suffices by \eqref{eq: ascent} to show $H_{\cV, x_0}$ carries a functorial  $\cO_G$-module structure given by an action map
$H_{\cO_G, x_0} \circ H_{\cV, x_0} \to H_{\cV, x_0}$ with identity and associativity coherences. From the decomposition 
$\cO_G = \oplus_{\lambda\in \Lambda^+} V_\lambda^\vee \otimes V_\lambda$, to construct the action map $H_{\cO_G, x_0} \circ H_{\cV, x_0} \to H_{\cV, x_0}$, it suffices
to define $H_{V_\lambda^\vee \otimes V_\lambda, x_0} \circ H_{\cV, x_0} \to H_{\cV, x_0}$, for $\lambda\in \Lambda^+$. We take this to be the transpose of
the monodromy automorphism $m_{V_\lambda \otimes \cV}$ under the duality of $V_\lambda$. It is a diagram chase to show  the 
canonical equivalences $m_{\cV_1 \otimes \cV_2} \simeq m_{\cV_1} \otimes m_{\cV_2}$,
and their higher coherences naturally extend this action map to an $\cO_G$-module structure on $H_{\cV, x_0}$. 

Now we have shown there is a natural factorization of the Hecke functor
\begin{equation}\label{eq: action map}
\xymatrix{
H_{x_0}:\Perf (B\dG_E) \ar[r]^-{\pi^*} & \QCoh(\dG_E/\dG_{E}) \ar[r] & \End( \Sh_{\calN_G(X)} (\Bun_G(X), E))
}
\end{equation} 
but not yet confirmed it is monoidal. For this, let us return to the picture that $\Perf(B\dG_{E})$ is not only monoidal but a tensor category. 
The canonical equivalences of Proposition~\ref{p: chiral} enhance  the monodromy automorphism $m$  to  a tensor automorphism:
there are evident coherences intertwining the symmetric structure of $\Perf(B\dG_{E})$ and the monodromy automorphisms $ m_{\cV_1 \otimes \cV_2}, m_{\cV_2 \otimes \cV_1} $. It is again a diagram chase to show that this equips
the factorization \eqref{eq: action map} with a monoidal structure.


This concludes the proof of the first assertion; the second is similar and we leave to the reader.
\end{proof}

\begin{remark}\label{rem: scientific} 
Let us outline a more scientific approach to Proposition~\ref{p: tensor auts} (see also the discussion of
Remark~\ref{rem: chiral homol}) whose more detailed explanation would take us too far a field. 
Set $A= \Perf(B\dG_{E})$, $\cC = \Sh_{\calN_G(X)} (\Bun_G(X), E)$,
so that $A$ is a symmetric monoidal or $E_\infty$-category,
and the Hecke action at $x_0\in X$ gives a monoidal or $E_1$-functor $H:A\to \End(\cC)$. 

Let $\cL(\End(\cC))$ be the inertia or loop category of objects $\varphi\in \End(\cC)$
 equipped with an automorphism $\gamma\in \Aut(\varphi)$.  Since $\End(\cC)$ is an $E_1$-category, $\cL(\End(\cC))$ is an $E_2$-category, and the projection $\cL(\End(\cC))\to \End(\cC) $ is an $E_1$-functor.
  By a monoidal automorphism, let us  mean an $E_1$-lift $H^{(1)}:A\to \cL(\End(\cC))$; by a tensor automorphism, let us mean an $E_2$-lift $H^{(2)}:A\to \cL(\End(\cC))$. 

In our geometric situation, given a loop $\gamma:S^1\to X$ based at $x_0\in X$, Proposition~\ref{p: chiral}  provides a natural  tensor automorphism $H^{(2)}:A\to \cL(\End(\cC))$.
By adjunction,  this in turn provides an $E_1$-functor $A \otimes S^1\to \End(\cC)$,
where $A \otimes S^1$ denotes the Hochschild $E_1$-category of the $E_2$-category $A$. Furthermore, we have the explicit calculation $  \Perf(B\dG_E) \otimes S^1 \simeq \Perf(\dG_E/\dG_{E})$  (see for example~\cite{BFN}).

Similar considerations  apply equally well to the second assertion of  Proposition~\ref{p: tensor auts}.
 \end{remark}

Now let $\Loc_{\dG}(X)$ be the Betti derived stack of topological $\dG$-local systems on $X$.
Thus for a choice of a base-point $x_0\in X$, we have the monodromy isomorphism   
\begin{equation*}
\xymatrix{
\Loc_{\dG}(X) \simeq \Hom(\pi_1(X, x_0), \dG)/\dG
}
\end{equation*}

More concretely, regarding $X$ as a topological surface, fix a standard basis 
$\alpha_1,  \ldots, \alpha_g$,
$\beta_1,  \ldots, \beta_g$
of loops  based at $x_0$ so that we have $\prod_{i=1}^g [\alpha_i, \beta_i] = 1 \in \pi_1(X, x_0)$.
Given a $\dG$-local system, with a trivialization of its fiber at $x_0$, taking monodromy around these loops gives
a  collection of elements $g(\alpha_i), g(\beta_i)\in G$, for $i = 1,\ldots, g$. 
In this way, we obtain  a Cartesian presentation
\begin{equation*}
\xymatrix{
\ar[d] \Loc_{\dG}(X)  \ar[r] & ((\dG)^{g}\times (\dG)^{g})/\dG\ar[d]^-c \\
B\dG \ar[r]^-e  & \dG/\dG
}
\end{equation*}
where the map $c$ is induced by $\prod_{i=1}^g [g(\alpha_i), g(\beta_i)]$, and the map $e$ by the inclusion of the identity.

To understand perfect complexes on $\Loc_{\dG}(X)_{E}$ via the above Cartesian square,  let us follow the proof of \cite[Proposition 4.13]{BFN}. Its hypotheses do not hold here, since neither $B\dG_{E}$ 
nor $((\dG_{E})^{g}\times (\dG_{E})^{g})/\dG_{E}$ is affine,
but in fact all that 
 its proof uses is that 
 $B\dG_{E}$ has affine diagonal, and $e$ and $c$ are affine.
 Namely,  the fact that $e$ is affine implies pushforward along it induces an equivalence
\begin{equation*}
\xymatrix{
\QCoh(B\dG_{E}) \ar[r]^-\sim & \Mod_{e_*\cO_{B\dG_{E}}}\QCoh(\dG_{E}/\dG_{E})
}
\end{equation*} 
 where  $\Mod_{e_*\cO_{B\dG_{E}}}\QCoh(\dG_{E}/\dG_{E})$ denotes module objects for the algebra object
 $e_*\cO_{B\dG_{E}} \in \QCoh(\dG_{E}/\dG_{E})$ of functions on the adjoint-orbit of the identity. By \cite[Proposition 4.1]{BFN},
 this in turn implies that
 $\QCoh(B\dG_{E})$ is self-dual, and in particular dualizable, as a  $\QCoh(\dG_{E}/\dG_{E})$-module.  Along with the fact that  $B\dG_{E}$ has affine diagonal, and $e$ and $c$ are affine, the limit calculations of the proof of \cite[Proposition 4.13]{BFN} 
 only depend on this dualizability.
 Thus
 we conclude  that pullback induces a tensor equivalence
\begin{equation*}
\xymatrix{
\QCoh(\Loc_{\dG}(X)_{E}) & \ar[l]_-\sim  \QCoh(((\dG_{E})^{g}\times (\dG_{E})^{g})/\dG_{E} ) \otimes_{\QCoh(\dG_{E}/\dG_{E})} \QCoh(B\dG_{E}) 
}
\end{equation*}
Since this equivalence preserves compact objects, we obtain a similar tensor equivalence
\begin{equation}\label{eq:tensor Perf Loc}
\xymatrix{
\Perf(\Loc_{\dG}(X)_{E}) & \ar[l]_-\sim  \Perf(((\dG_{E})^{g}\times (\dG_{E})^{g})/\dG_{E} ) \otimes_{\Perf(\dG_{E}/\dG_{E})} \Perf(B\dG_{E}) 
}
\end{equation}
by recalling the compatibility of the tensor product of small stable $\infty$-categories 
and presentable stable $\infty$-categories under taking $\Ind$ and conversely passing to compact objects.

With the preceding in hand, we can now conclude the following.

\begin{theorem}[Betti spectral action]\label{thm: action}
Let $E$ be a field of characteristic zero. There is an $E$-linear tensor action
\begin{equation*}
\xymatrix{
\Perf(\Loc_{\dG}(X)_{E}) \curvearrowright \Sh_{\calN_G(X)} (\Bun_G(X), E) }
\end{equation*}
such that for any point $x\in X$, its restriction via pullback along the natural evaluation
 \begin{equation*}
\xymatrix{
 \Rep(\dG_{E}) \ar[r]^-{\ev_{x}^{*}} & \Perf(\Loc_{\dG}(X)_{E})}
\end{equation*}
is isomorphic, under the Geometric Satake correspondence, to the Hecke action
of $\Sat^\heartsuit_G$ at the point $x\in X$.
\end{theorem}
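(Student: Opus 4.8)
The plan is to feed the recipe recalled immediately above the theorem. Fix a base-point $x_{0}\in X$ together with the standard loops $\alpha_{1},\beta_{1},\dots,\alpha_{g},\beta_{g}$ appearing in the Cartesian presentation of $\Loc_{\dG}(X)$, write $\cC=\Sh_{\calN_{G}(X)}(\Bun_{G}(X),E)$, and produce (i) a tensor action of $\Perf(B\dG_{E})\simeq D(\Rep(\dG_{E}))$ on $\cC$, and (ii) tensor automorphisms $g(\alpha_{i}),g(\beta_{i})$ of that action with $\prod_{i=1}^{g}[g(\alpha_{i}),g(\beta_{i})]=\id$. The recipe then outputs the asserted tensor action of $\Perf(\Loc_{\dG}(X)_{E})$ on $\cC$.

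For (i): under geometric Satake $\Rep(\dG_{E})\simeq\Sat^{0,\hs}_{G}$, and for $\cV\in\Sat^{0,\hs}_{G}$ the Hecke functor at $x_{0}$ — the restriction $H_{\cV,x_{0}}$ of $H_{\cV}$ to $\Bun_{G}(X)\times\{x_{0}\}$ — preserves $\cC$ by the singular-support-preservation part of Theorem~\ref{th:ss}. Compatibility of convolution on $\Sat^{0}_{G}$ with composition of Hecke functors, together with the Satake tensor equivalence, upgrades $\cV\mapsto H_{\cV,x_{0}}$ to a tensor functor $\Rep(\dG_{E})\to\End(\cC)$; since $E$ has characteristic zero the category $\Rep(\dG_{E})$ is semisimple and $\Perf(B\dG_{E})$ is generated by it under shifts and finite colimits, so this extends essentially uniquely to a tensor action of $\Perf(B\dG_{E})$ on $\cC$.

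For (ii): here the sharp form of Theorem~\ref{th:ss} — no singular codirections along $X$ — is used. For $\cV\in\Sat^{0,\hs}_{G}$ and $\cF\in\cC$ the object $H_{\cV}(\cF)$ has singular support inside $\calN_{G}(X)\times 0_{T^{*}X}$, hence is locally constant in the $X$-direction, and the microlocal non-characteristic deformation lemma of Kashiwara--Schapira \cite{KS} supplies, for any path in $X$, a parallel-transport isomorphism between the fibres of $H_{\cV}(\cF)$ over its endpoints, depending only on the homotopy class of the path. Applied to $\alpha_{i},\beta_{i}$ this gives automorphisms $g(\alpha_{i})(\cV),g(\beta_{i})(\cV)$ of $H_{\cV,x_{0}}$ natural in $\cF$; compatibility of parallel transport with convolution — which I would organize using the multi-point Hecke functors of Theorem~\ref{th:mult ss} — makes each a tensor automorphism of the action of (i), and makes $\gamma\mapsto g(\gamma)$ a homomorphism $\pi_{1}(X,x_{0})\to\Aut^{\otimes}(\textup{action})$. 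The surface relation $\prod_{i=1}^{g}[\alpha_{i},\beta_{i}]=1$ in $\pi_{1}(X,x_{0})$ then forces $\prod_{i=1}^{g}[g(\alpha_{i}),g(\beta_{i})]=\id$. For the last clause, note that the $\Perf(B\dG_{E})$-action fed into the recipe is by construction the Hecke action at $x_{0}$, while in the Cartesian presentation $\ev_{x_{0}}$ is exactly the projection to $B\dG$, so $\ev_{x_{0}}^{*}$ recovers that action; for general $x$ one reruns the construction with base-point $x$ (the resulting action being canonically identified with the original one via parallel transport along a chosen path), which identifies the restriction along $\ev_{x}^{*}$ with the Hecke action at $x$.

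The main obstacle I expect is step (ii): promoting the family of pointwise parallel-transport isomorphisms to a genuinely coherent datum — a $\pi_{1}(X,x_{0})$-action on the tensor functor compatible with all the higher homotopies the recipe implicitly demands — rather than a bare collection of automorphisms satisfying a single relation. Conceptually this amounts to exhibiting the integrated Hecke action as a module over the factorization homology $\int_{X}\Rep(\dG_{E})$, using Theorems~\ref{th:ss} and~\ref{th:mult ss} together with the formalism of locally constant factorization algebras, after which the identification $\int_{X}\Rep(\dG_{E})\simeq\Perf(\Loc_{\dG}(X)_{E})$ from \cite{BFN} closes the argument; the generators-and-relations recipe is the concrete shadow of this module structure.
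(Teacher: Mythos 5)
Your proposal is correct and follows essentially the same route as the paper: Hecke functors at a point preserve $\Sh_{\calN_G(X)}(\Bun_G(X),E)$ by Theorem~\ref{th:ss}, the vanishing of singular codirections along $X$ plus the Kashiwara--Schapira constructibility results give canonical identifications of the actions along paths (and over contractible sets, whence the surface relation), and the generators-and-relations presentation of $\Perf(\Loc_{\dG}(X)_{E})$ assembles these data into the tensor action. The coherence issue you flag is real but is handled in the paper at the same level of rigor you propose, via the statement that the restriction functors $\res_t$ are equivalences with $\res_{t'}^{-1}\circ\res_t$ canonically isomorphic to the identity, rather than by invoking the full factorization-homology formalism.
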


\begin{proof}

By the geometric Satake correspondence, the Hecke action  by $\Sat^\heartsuit_G$ at the base-point $x_0\in X$ provides a  $\Rep(\dG_{E})$-action 
on $ \Sh_{\calN_G(X)} (\Bun_G(X), E)$.
 By the first assertion of Proposition~\ref{p: tensor auts},  the  based loops $\alpha_1,  \ldots, \alpha_g$,
$\beta_1,  \ldots, \beta_g$ provide
a lift to a $\Perf(\dG_{E}/\dG_E)^{\otimes 2g}$-action. 
  Applying the second assertion of Proposition~\ref{p: tensor auts} to a disk with boundary the composition 
  $\prod_{i=1}^g [\alpha_i, \beta_i]$
  of the based loops, we see that  the
    $\Perf(\dG_{E}/\dG_E)^{\otimes 2g}$-action factors through a $\Perf(\Loc_{\dG}(X)_{E})$-action. 
\end{proof}


\subsubsection{With level structure}

Let $S\subset X$ be a finite subset, and 
set $U=X\setminus S$.
 Let $\LL_{S}=(\KK_{x})_{x\in S}$ denote the choice of a level structure for each $x\in S$.

Let $\Loc_{\dG}(U)$ be the Betti derived stack of topological $\dG$-local systems on $U$.
Thus for a choice of a base-point $u_0\in U$, we have the monodromy isomorphism   
\begin{equation*}
\xymatrix{
\Loc_{\dG}(U) \simeq \Hom(\pi_1(U, u_0), \dG)/\dG.
}
\end{equation*}
Assuming $S$ is nonempty, so that $U$ is homotopy equivalent to a bouquet of $n$ circles, 
we may choose based loops so that  the monodromy isomorphism   takes the form
\begin{equation*}
\xymatrix{
\Loc_{\dG}(U) \simeq (\dG)^n/\dG.
}
\end{equation*}

The following generalization of Theorem~\ref{thm: action} can be deduced from Theorem~\ref{th:ss with ls}
by the same argument. (In fact, assuming $S$ is nonempty, it only involves the first two steps, but not the third, since there is no equation to impose.)

\begin{theorem}[Betti spectral action with level structure]\label{thm: action with ls}
Let $E$ be a field of characteristic zero. There is an $E$-linear tensor action
\begin{equation*}
\xymatrix{
\Perf(\Loc_{\dG}(U)_{E}) \curvearrowright  \Sh_{\calN(\LL_S)} (\Bun(\LL_S), E)
}
\end{equation*}
such that for any point $u\in U$, its restriction via pullback along  the natural evaluation
\begin{equation*}
\xymatrix{
  \Rep(\dG_{E}) \ar[r]^-{\ev_u^*} & \Perf(\Loc_{\dG}(U)_{E})
}
\end{equation*}
is isomorphic, under the Geometric Satake correspondence, to the Hecke action
of $\Sat^\heartsuit_G$ at the point $u\in U$.
\end{theorem}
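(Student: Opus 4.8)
The plan is to mimic the proof of Theorem~\ref{thm: action}, replacing the closed curve $X$ by $U=X\setminus S$, the cone $\calN_G(X)$ by $\calN(\LL_S)$, and the input Theorem~\ref{th:ss} by its ramified version Theorem~\ref{th:ss with ls}. The one genuine simplification is that, since $S\neq\vn$, the group $\pi_1(U,u_0)$ is free, so the third and last step of that proof, which imposed the surface relation $\prod_i[g(\alpha_i),g(\beta_i)]=1$ by restricting to a disk, has nothing to do here.

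First I would, by Theorem~\ref{th:ss with ls} with $n=1$, observe that for every $\cV\in\Sat^{0}_{G}$ the Hecke functor $H^{U}_{\cV}$ carries $\Sh_{\calN(\LL_S)}(\Bun(\LL_S),E)$ into $\Sh_{\calN(\LL_S)\times U}(\Bun(\LL_S)\times U,E)$; restricting to kernels in $\Sat^{\heartsuit}_{G}$ and then restricting the output to the slice $\Bun(\LL_S)\times\{u_0\}$ over a chosen base point $u_0\in U$ gives, through the Geometric Satake equivalence, a $\Rep(\dG_{E})$-action on $\Sh_{\calN(\LL_S)}(\Bun(\LL_S),E)$. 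Next, exactly as in the proof of Theorem~\ref{thm: action}, I would establish local constancy along $U$: for a path $\gamma:\RR\to U$, an object $\cF\in\Sh_{\calN(\LL_S)}(\Bun(\LL_S),E)$, and a kernel $\cV\in\Sat^{\heartsuit}_{G}$, Theorem~\ref{th:ss with ls} puts $(\id\times\gamma)^{*}H^{U}_{\cV}(\cF)$ in $\Sh_{\calN(\LL_S)\times\RR}(\Bun(\LL_S)\times\RR,E)$, whose restriction to $\Bun(\LL_S)\times\{t\}$ is the Hecke modification of $\cF$ at $\gamma(t)$ by $\cV$; choosing a smooth chart $v:V\to\Bun(\LL_S)$ and a $\mu$-stratification $\cS$ of $V$ with the induced nilpotent Lagrangian $\cN_V\subset T^{*}_{\cS}V$ (\cite[Cor. 8.3.22]{KS}), this pullback is locally constant along $\cS\times\RR$ (\cite[Prop. 8.4.1]{KS}), so the restriction functors to slices are equivalences and $\res_{t'}^{-1}\circ\res_{t}$ is canonically the identity, and the same argument applies with $\RR$ replaced by any contractible manifold mapping to $U$. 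Hence the $\Rep(\dG_{E})$-action is independent, up to canonical isomorphism, of the point of $U$, and its value at any $u\in U$ is the Hecke action at $u$.

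Then I would assemble the full action. Recall that, since $S\neq\vn$, the group $\pi_1(U,u_0)$ is free on loops $c_1,\dots,c_n$ and $\Loc_{\dG}(U)\simeq(\dG)^{n}/\dG$. Transporting the $\Rep(\dG_{E})$-action around each $c_i$ using the canonical isomorphisms just obtained furnishes $n$ tensor automorphisms $g(c_1),\dots,g(c_n)$ of that action, and freeness of $\pi_1(U,u_0)$ means no relation among them has to be imposed. Finally, writing $(\dG_{E})^{n}/\dG_{E}$ as the $n$-fold fiber product of $\dG_{E}/\dG_{E}$ over $B\dG_{E}$, the BFN-type computation used in the proof of Theorem~\ref{thm: action} (which needs only that $B\dG_{E}$ has affine diagonal, that the structure maps are affine, and the self-duality statement of \cite[Prop. 4.1]{BFN}) presents $\Perf(\Loc_{\dG}(U)_{E})$ as $\Perf(B\dG_{E})$ with $n$ independent tensor automorphisms adjoined; feeding in the $\Rep(\dG_{E})$-action of the first step together with $g(c_1),\dots,g(c_n)$ produces the desired $E$-linear tensor action of $\Perf(\Loc_{\dG}(U)_{E})$, and unwinding the construction identifies its restriction along $\ev_u^{*}$ with the Hecke action of $\Sat^{\heartsuit}_{G}$ at $u$.

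I do not expect a genuine new difficulty: all the analytic content is packaged in Theorem~\ref{th:ss with ls}, and the only point requiring care is aligning the combinatorics of $\pi_1(U,u_0)$ with the presentation $(\dG)^{n}/\dG$ and the BFN gluing. This is strictly simpler than the closed-surface case of Theorem~\ref{thm: action}, since no commutator relation enters the picture.
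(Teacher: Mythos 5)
Your proposal is correct and is essentially the paper's own argument: the paper proves this theorem by repeating the proof of Theorem~\ref{thm: action} with Theorem~\ref{th:ss with ls} in place of Theorem~\ref{th:ss}, noting that for $S$ nonempty the group $\pi_1(U,u_0)$ is free so only the first two steps are needed and no relation is imposed. Your handling of the local constancy, the free generators, and the BFN-type presentation of $\Perf(\Loc_{\dG}(U)_{E})$ matches the paper's intent exactly.
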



\subsubsection{With tame ramification}
Let $S\subset X$ be a finite subset, and 
set $U=X\setminus S$.
 We consider here level structure $\LL_{S}=(\II_{x})_{x\in S}$ that is Iwahori level structure at each $x\in S$. 
 To simplify the notation, we will only  include $S$, rather than $\LL_S$, in the notation for the resulting objects. For example,  $\Bun_G(X, S)$ denotes the moduli of of $G$-bundles on $X$ with a $B$-reduction at each $s\in S$.
Let $\calN_G(X, S)\subset T^*\Bun_G(X, S)$ denote the global nilpotent cone.

Let $\Loc_{\dG}(X, S)$ denote the moduli of $\dG$-local systems on $U$ equipped near $S$
with  $B^\vee$-reductions 
with trivial induced $H^\vee$-monodromy.

Thus for a choice of a base-point $u_0\in U$, we have the monodromy isomorphism   
\begin{equation*}
\xymatrix{
\Loc_{\dG}(X, S) \simeq (\Hom(\pi_1(U, u_0), \dG)
 \times_{ (\dG)^S} (\wt \cN^\vee)^S) /\dG
}
\end{equation*}
and an induced tensor equivalence ($E$ is a characteristic zero field)
\begin{equation*}
\xymatrix{
\Perf(\Loc_{\dG}(X, S)_{E}) & \ar[l]_-\sim  
\Perf(\Hom(\pi_1(U, u_0), \dG_{E})/\dG_{E})
 \otimes_{ \Perf(\dG_{E}/\dG_{E})^{\otimes S}} \Perf(\wt \cN^\vee_{E}/\dG_{E})^{\otimes S}.
}
\end{equation*}

By Bezrukavnikov's tame local Langlands correspondence~\cite[Theorem 1(4)]{B}, at each $s\in S$, we have a monoidal equivalence
\begin{equation*}
\xymatrix{      \Coh(\St_{\dG,E}/\dG_{E}) \simeq \Sh_{c}(\II_{s}\bs G((t_{s}))/\II_{s}, E)
}
\end{equation*}
where $\St_{\dG}=\tdN\times_{\dG}\tdN$ is the derived Steinberg variety over $ E$, and $\St_{\dG,E}$ is its base change to $E$. In particular, via the diagonal embedding $\D:\tdN\incl \St_{\dG}$, we have a monoidal functor
\begin{equation}\label{eq: Waki}
\xymatrix{       \Perf(\tdN_{E}/\dG_{E}) \ar[r]^-{\D_{*}}  & \Coh(\St_{\dG,E}/\dG_{E}) \simeq \Sh_{c}(\II_{s}\bs G((t_{s}))/\II_{s}, E).
}
\end{equation}
Since the Iwahori-Hecke category $\Sh_{c}(\II_{s}\bs G((t_{s}))/\II_{s}, E)$  acts on $\Sh(\Bun_G(X, S), E)$ by bundle modification at $s$, we get commuting actions of $\Perf(\tdN_{E}/\dG_{E})$ on $\Sh(\Bun_G(X, S),E)$ for each $s\in S$. By Bezrukavnikov's construction \cite[4.1.2]{B},
the restriction of the actions along the pullback $\Rep(\dG_{E})\simeq\Perf(B\dG_{E}) \to \Perf(\tdN_{E}/\dG_{E})$ is equivalent to the nearby cycles of the Satake action of $\Rep(\dG_{E})$ at nearby points. Moreover, the monodromy of the nearby cycles provides the lift of the restriction to   the pullback
$\Perf(\dG_{E}/ \dG_{E}) \to \Perf(\tdN_{E}/\dG_{E})$.

With the preceding in hand, the following tamely ramified version of Theorem~\ref{thm: action} can be deduced 
by the same argument.

\begin{theorem}[tamely ramified Betti spectral action]\label{thm: action tame}
Let $E$ be a field of characteristic zero.
There is  a tensor action
 \begin{equation*}
\xymatrix{
\Perf(\Loc_{\dG}(X, S)_{E}) \curvearrowright \Sh_{\calN_G(X, S)} (\Bun(X, S),E) 
}
\end{equation*}
such that for any point $u\in U$, its restriction via pullback along  the natural evaluation
 \begin{equation*}
\xymatrix{
 \Rep(\dG_{E})\ar[r]^-{\ev^{*}_{u}} & \Perf(\Loc_{\dG}(X, S)_{E})
}
\end{equation*}
 is isomorphic, under the Geometric Satake correspondence, to the Hecke action
of $\Sat^\heartsuit_G$ at the point $u\in U$,
and for any point $s\in S$, its restriction via pullback along  the natural evaluation
 \begin{equation*}
\xymatrix{
\ev_s^*:\Perf(\tdN_{E}/\dG_{E})  \ar[r] & \Perf(\Loc_{\dG}(X, S)_{E})
}
\end{equation*}
is isomorphic to the action of $\Perf(\tdN_{E}/\dG_{E})$ at the point $s\in S$ given by \eqref{eq: Waki}.
\end{theorem}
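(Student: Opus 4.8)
The plan is to imitate the proof of Theorem~\ref{thm: action} with the single Satake action replaced by data distributed over the open curve $U$ and the ramification points $S$, and then to assemble the desired action from the relative tensor product presentation
\begin{equation*}
\Perf(\Loc_{\dG}(X, S)_{E}) \simeq
\Perf(\Hom(\pi_1(U, u_0), \dG_{E})/\dG_{E})
 \otimes_{ \Perf(\dG_{E}/\dG_{E})^{\otimes S}} \Perf(\tdN_{E}/\dG_{E})^{\otimes S}
\end{equation*}
recorded above. By the bookkeeping preceding the theorem, to give a tensor action of the right-hand side on a category $\cC$ it is enough to supply: a tensor action of $\Perf(B\dG_{E})$ on $\cC$; tensor automorphisms of this action indexed by a generating set of based loops of $\pi_1(U, u_0)$, with no relation to impose since $U$ is homotopy equivalent to a bouquet of circles once $S\neq\vn$; a tensor action of $\Perf(\tdN_{E}/\dG_{E})$ on $\cC$ for each $s\in S$; and, for each $s\in S$, the compatibility that the $\Perf(\dG_{E}/\dG_{E})$-structure underlying the $\Perf(\tdN_{E}/\dG_{E})$-action equals the one built from the $\Perf(B\dG_{E})$-action together with the monodromy automorphism of a small loop around $s$. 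I will produce each piece on $\cC=\Sh_{\calN_G(X, S)} (\Bun(X, S),E)$.

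For the part supported on $U$: by Theorem~\ref{th:ss with ls} with $n=1$, the Hecke functor $H^{U}_{\cV}$ at points of $U$ preserves $\cC$ and introduces no singular codirections along $U$, so running the microlocal local-constancy argument of the proof of Theorem~\ref{thm: action} word for word --- for a smooth chart $w:W\to\Bun(X, S)$ choose a $\mu$-stratification adapted to the induced nilpotent Lagrangian, apply \cite[Prop. 8.4.1]{KS} over $W\times\RR$, and observe that the restriction functors at the ends of a contractible interval of parameters are inverse equivalences canonically isomorphic to the identity --- shows that the $\Rep(\dG_{E})$-action at a point of $U$ is canonically independent of that point within any contractible family. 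Fixing $u_0\in U$ gives the $\Perf(B\dG_{E})$-action, transport around the chosen loops gives the tensor automorphisms, and the asserted $\ev_u^{*}$-compatibility for $u\in U$ is immediate.

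For the part supported on $S$: composing Bezrukavnikov's monoidal functor \eqref{eq: Waki} with the bundle-modification action of $\Sh_{c}(\II_{s}\bs G((t_{s}))/\II_{s}, E)$ at $s$ gives commuting $\Perf(\tdN_{E}/\dG_{E})$-actions on $\Sh(\Bun(X, S), E)$; one must check that each preserves $\cC$. This is a fixed-point analogue of the Claim preceding Lemma~\ref{lem: null pairing}: a singular-support estimate for the Hecke correspondence at the single point $s$ with values in a Schubert variety of the affine flag variety, carrying along the $B$-reduction at $s$. Since one leg of this correspondence is (locally) a smooth fibration and the other is proper, the estimates \eqref{eq: pullback}--\eqref{eq: pushforward} reduce, after a Grassmannian uniformization exactly as in the proof of Theorem~\ref{th:ss}, to the vanishing of the relevant residue pairings for nilpotent covectors; the proof of Lemma~\ref{lem: null pairing} is local at $s$ and insensitive to the parabolic refinement, so it applies, and the $\ev_s^{*}$-compatibility then holds by the construction of \eqref{eq: Waki}.

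The main obstacle is exactly this fixed-point singular-support estimate: unlike in the unramified theorem, where preservation of nilpotent singular support at a point was obtained by restricting the moving-point statement of Theorem~\ref{th:ss}, here the Iwahori structure is anchored at the fixed point $s$, so a genuinely new (if routine) microlocal computation on $\Bun(X, S)$ is required, and one should take care that the parabolic poles allowed in $\calN_G(X, S)$ at $S$ are correctly accounted for. The remaining input --- that under \eqref{eq: Waki} the pullback $\Perf(B\dG_{E})\to\Perf(\tdN_{E}/\dG_{E})$ is the nearby-cycles degeneration of the Satake action and its monodromy endomorphism is the $\Perf(\dG_{E}/\dG_{E})$-structure --- is taken as already established in the discussion preceding the theorem. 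With the estimate in hand, the two $\Perf(\dG_{E}/\dG_{E})$-structures at each $s$ agree, the relative tensor product assembles, and the proof concludes exactly as for Theorem~\ref{thm: action}.
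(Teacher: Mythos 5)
Your proposal follows the same route as the paper: the paper's entire proof of this theorem is the assertion that it ``can be deduced by the same argument'' as Theorem~\ref{thm: action}, given the tensor-product presentation of $\Perf(\Loc_{\dG}(X,S)_{E})$, the functor \eqref{eq: Waki}, and the stated compatibility of \eqref{eq: Waki} with nearby cycles of the Satake action. Your expansion of that sentence is faithful, and you are right to single out the one genuinely new verification the paper leaves implicit: that the $\Perf(\tdN_{E}/\dG_{E})$-action at each $s\in S$, defined a priori on all of $\Sh(\Bun(X,S),E)$, preserves the full subcategory $\Sh_{\calN_G(X,S)}(\Bun(X,S),E)$. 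One correction to your sketch of that step: because the modification point $s$ is fixed, there is no covector $\theta\in T^*_sX$ to kill, so the required estimate is only the first (easy) half of the Claim preceding Lemma~\ref{lem: null pairing} --- namely that $dp_-(\phi_-)=dp_+(\phi_+)$ in the cotangent space of the Iwahori--Hecke stack at $s$ forces $\phi_-$ and $\phi_+$ to agree on $X\setminus s$, whence $\phi_+$ is generically nilpotent and hence lies in $\calN_G(X,S)$, which is defined precisely by generic nilpotence. The residue-pairing computation of Lemma~\ref{lem: null pairing}, which you invoke, is the mechanism for proving $\theta=0$ in the moving-point case and plays no role here; so your ``main obstacle'' is in fact the routine half of the unramified argument (run on the affine flag variety rather than the affine Grassmannian, after the same resolution-of-orbit-closures device). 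The rest of your assembly --- local constancy over $U$ from Theorem~\ref{th:ss with ls}, transport around generators of $\pi_1(U,u_0)$ with no relation to impose since $S\neq\vn$, and the matching of the two $\Perf(\dG_{E}/\dG_{E})$-structures at each $s$ via the nearby-cycles and monodromy statements quoted from the discussion preceding the theorem --- is exactly what the paper intends.
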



\section{Betti excursion operators and Betti Langlands parameters}
One can use the local constancy of the Hecke functors proved in Theorem~\ref{th:ss with ls} to define a subalgebra of the center of the automorphic category, analogous to the ``excursion operators'' defined by V.~Lafforgue. Using these operators, we can  associate a Betti Langlands parameter to an indecomposable automorphic complex.   

In this subsection, $E$ is an algebraically closed field.

Let $S\subset X$ be a finite subset, and 
set $U=X\setminus S$. Set $\Gamma=\pi_{1}(U,u_{0})$, which is a finitely presented group. 
 Let $\LL_{S}=(\KK_{x})_{x\in S}$ denote the choice of a level structure for each $x\in S$.

\subsection{Betti excursion operators}\label{s:Betti ex}
In \cite{Laf}, V.~Lafforgue constructed a collection of operators on the space of cusp forms called ``excursion operators'' using moduli of iterated Shtukas. In the setting of Betti geometric Langlands, we have an analogous construction, now acting on each object $\cF\in \Sh_{\calN(\LL_S)} (\Bun(\LL_S), E)$ (or, acting on the identity functor of $\Sh_{\calN(\LL_S)} (\Bun(\LL_S), E)$).

For any $n\ge1$, consider $\cO_{n}=\cO(\dG_{E})^{\otimes n}= \cO((\dG_{E})^{n})$. Consider the action of $(\dG)^{n+1}$ on $(\dG)^{n}$ given by
\begin{equation*}
(h_{0},h_{1},\cdots, h_{n})\cdot (g_{1},g_{2},\cdots ,g_{n})=(h_{0}g_{1}h^{-1}_{1}, h_{0}g_{2}h^{-1}_{2},\cdots, h_{0}g_{n}h^{-1}_{n}).
\end{equation*}
This way $\cO_{n}$ becomes a representation of $(\dG)^{n+1}$. By Remark~\ref{r:RepGn action}, each object $V\in \Rep((\dG)^{n+1}, E)$ defines a Hecke functor $H_{n+1,V}$. Passing to ind-objects, the ind-object $\cO_{n}\in \Ind-\Rep((\dG)^{n+1}, E)$ defines a Hecke functor 
\begin{equation*}
\xymatrix{   H_{n+1,\cO_{n}}:\Sh_{\calN(\LL_S)} (\Bun(\LL_S), E)\ar[r] & \Sh_{\calN(\LL_S)} (\Bun(\LL_S)\times U^{n+1}, E)
}
\end{equation*}

If we restrict to the diagonal $\D: \dG\incl (\dG)^{n+1}$, the induced action of $\dG$ on $(\dG)^{n}$ is by simultaneous conjugation. 

Consider the tautological inclusion
\begin{equation*}
\xymatrix{      \cO((\dG_{E})^{n}/\dG_{E})   = \cO_{n}^{\D(\dG_{E})} \ar@{^{(}->}[r] & \cO_{n}
}
\end{equation*}
as $\D(\dG_{E})$-modules.  This induces a natural transformation 
\begin{equation*}
\xymatrix{c^{\sharp}: \cO((\dG_{E})^{n}/\dG_{E})\otimes \id\ar[r] & H_{n+1,\cO_{n}}|_{\D(u_{0})}
}
\end{equation*}
of endo-functors on $\Sh_{\calN(\LL_S)} (\Bun(\LL_S), E)$.
Here $H_{n+1,\cO_{n}}|_{\D(u_{0})}$ is the composition of $H_{n+1,\cO_{n}}$ with the restriction to $\Bun(\LL_S)\times\{\D(u_{0})\}$. 

Now by Theorem \ref{th:ss with ls}, the functor $H_{n+1,\cO_{n}}|_{\D(u_{0})}$ carries an action by the fundamental group $\pi_{1}(U^{n+1}, \Delta(u_{0}))\simeq \Gamma^{n+1}$. In particular, for any $\un\g=(\g_{1},\cdots, \g_{n})\in\Gamma^{n}$, we may consider the action 
\begin{equation*}
\xymatrix{(1,\un\g):        H_{n+1,\cO_{n}}|_{\D(u_{0})}\ar[r]   & H_{n+1,\cO_{n}}|_{\D(u_{0})}
}
\end{equation*}  
of $(1,\g_{1},\cdots, \g_{n})$ on $H_{n+1,\cO_{n}}|_{\D(u_{0})}$.
Finally, evaluation at the identity gives a $\D(\dG)$-invariant map $\cO_{n}\to E$, hence a natural transformation
\begin{equation*}
\xymatrix{  c^{\flat}:        H_{n+1,\cO_{n}}|_{\D(u_{0})}\ar[r] &  H_{1,E}=\id
}
\end{equation*}
The composition
\begin{equation*}
\xymatrix{\cO((\dG_{E})^{n}/\dG_{E})\otimes \id\ar[r]^-{c^{\sharp}} & H_{n+1,\cO_{n}}|_{\D(u_{0})}\ar[r]^{(1,\un\g)} & H_{n+1,\cO_{n}}|_{\D(u_{0})}\ar[r]^-{c^{\flat}} & \id}
\end{equation*}
defines an $E$-linear map
\begin{equation*}
\xymatrix{S_{\un\g}: \cO((\dG_{E})^{n}/\dG_{E})\ar[r] &  \End(\id_{\Sh_{\calN(\LL_S)} (\Bun(\LL_S), E)})=:\cZ(\Sh_{\calN(\LL_S)} (\Bun(\LL_S), E)).
}
\end{equation*}
It is easy to check that this is indeed an $E$-algebra map. 

We may call $S_{\un\g}$ the {\em Betti excursion operator} associated to the $n$-tuple $\un\g\in \Gamma^{n}$. For each object $\cF\in \Sh_{\calN(\LL_S)} (\Bun(\LL_S), E)$, we get an action of $ \cO((\dG_{E})^{n}/\dG_{E})$ on $\cF$
\begin{equation}\label{eq:Ex F}
\xymatrix{    S_{\un\g, \cF}: \cO((\dG_{E})^{n}/\dG_{E})\ar[r] & Z(\End(\cF))
}
\end{equation}
where $\End(\cF)$ is the (plain) $E$-algebra of endomorphisms of $\cF$ in the homotopy category of $\Sh_{\calN(\LL_S)} (\Bun(\LL_S), E)$, and $Z(\End(\cF))$ its (underived) center.

The assignment $\un\g\mapsto S_{\un\g}$ defines a {\em universal excursion operator}
\begin{equation*}
\xymatrix{      \Th_{n}:      \cO((\dG_{E})^{n}/\dG_{E})\ar[r]  & C(\Gamma^{n}, \cZ(\Sh_{\calN(\LL_S)} (\Bun(\LL_S), E))).
}
\end{equation*}
Here the right hand side is the ring of $\cZ(\Sh_{\calN(\LL_S)} (\Bun(\LL_S), E))$-valued functions on $\Gamma^{n}$, under pointwise ring operations.

The following proposition is an analogue of \cite[Definition-Proposition 11.3(c)(d)]{Laf}, with the same formal aspects of the proof. The local constancy of the Hecke action proved in Theorem \ref{th:mult ss} replaces the use of Drinfeld lemma in \cite{Laf}.

\begin{prop}\label{p:univ ex} The universal excursion operators $\Th_{n}$ satisfy the following properties.
\begin{enumerate}
\item For any $m,n\ge1$ and any map $\z: \{1,2,\cdots, m\}\to \{1,2,\cdots, n\}$, the following diagram is commutative
\begin{equation*}
\xymatrix{   \cO((\dG_{E})^{m}/\dG_{E}) \ar[r]^-{\Th_{m}}\ar[d]^{(-)^{\z}} &     C(\Gamma^{m}, \cZ(\Sh_{\calN(\LL_S)} (\Bun(\LL_S), E)))\ar[d]^{(-)^{\z}}    \\
\cO((\dG_{E})^{n}/\dG_{E}) \ar[r]^-{\Th_{n}} &     C(\Gamma^{n}, \cZ(\Sh_{\calN(\LL_S)} (\Bun(\LL_S), E)))}
\end{equation*}
Here the vertical maps labeled $(-)^{\z}$ are induced by the natural maps
\begin{equation*}
\xymatrix{    (\dG)^{n}\ar[r] & (\dG)^{m},       & \Gamma^{n}\ar[r] & \Gamma^{m}
}
\end{equation*}
given by $(g_{1},\cdots, g_{n})\mapsto (g_{\z(1)},\cdots, g_{\z(m)})$.

\item For any $n\ge1$, the following diagram is commutative
\begin{equation*}
\xymatrix{         \cO((\dG_{E})^{n}/\dG_{E}) \ar[r]^-{\Th_{n}}\ar[d]^{\mu_{n,n+1}} &     C(\Gamma^{n}, \cZ(\Sh_{\calN(\LL_S)} (\Bun(\LL_S), E)))\ar[d]^{\mu_{n,n+1}}    \\
\cO((\dG_{E})^{n+1}/\dG_{E}) \ar[r]^-{\Th_{n+1}} &     C(\Gamma^{n+1}, \cZ(\Sh_{\calN(\LL_S)} (\Bun(\LL_S), E)))
}
\end{equation*}
where the vertical maps labeled by $\mu_{n,n+1}$ are induced by the maps
\begin{equation*}
\xymatrix{    (\dG)^{n+1}\ar[r] & (\dG)^{n},       & \Gamma^{n+1}\ar[r] & \Gamma^{n}       
}
\end{equation*}
given by $(g_{1},\cdots, g_{n},g_{n+1})\mapsto (g_{1},\cdots, g_{n-1}, g_{n}g_{n+1})$.
\end{enumerate}
\end{prop}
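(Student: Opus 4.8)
The plan is to follow, step for step, the formal structure of the proof of \cite[Definition-Proposition~11.3(c)(d)]{Laf}, with the appeal to Drinfeld's lemma there replaced by the local constancy of the multi-point Hecke functors established in Theorem~\ref{th:ss with ls} (which reduces to Theorem~\ref{th:mult ss} in the absence of level structure). Recall that $\Th_{n}(f)(\un\g)=S_{\un\g}(f)$ is, by construction, the composite of the creation map $c^{\sharp}$, the monodromy automorphism $(1,\un\g)$, and the annihilation map $c^{\flat}$ of the endofunctor $H_{n+1,\cO_{n}}|_{\D(u_{0})}$ of $\Sh_{\calN(\LL_S)}(\Bun(\LL_S),E)$. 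So both parts of the proposition amount to tracking how each of these three arrows transforms under the operations $(-)^{\z}$ and $\mu_{n,n+1}$ on the two sides of the asserted squares.

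First I would assemble the structural properties of the multi-point Hecke functors that the construction of $S_{\un\g}$ rests on. (a) The assignment $\cV\mapsto H_{m,\cV}$ is functorial and exact in each Satake variable, so a $\dG_{E}$-equivariant morphism of (ind-)kernels induces a natural transformation of the associated endofunctors after restriction to the small diagonal $\D(u_{0})$. (b) Inserting a leg carrying the unit kernel $\un E_{\Gr^{0}_{G}}$ identifies $H_{m+1,-}$ with the pullback of $H_{m,-}$ along the projection forgetting the new point; in particular the new $\Gamma$-factor and the new representation-theoretic slot act trivially. (c) Permuting legs supported at distinct points acts through the commutativity constraint of geometric Satake. (d) \emph{Fusion}: for the partial diagonal $\D_{i,i+1}\colon U^{n}\incl U^{n+1}$ identifying two adjacent coordinates, there is a canonical identification $\D_{i,i+1}^{*}H_{n+1,\cV}\simeq H_{n,\textup{conv}(\cV)}$, where $\textup{conv}\colon\Sat^{0}_{G}(n+1)\to\Sat^{0}_{G}(n)$ convolves the corresponding two Satake factors; under geometric Satake this identifies $\textup{conv}$ with restriction of representations along the diagonal $\dG_{E}\incl\dG_{E}\times\dG_{E}$. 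The input for (d) is the factorization structure of the Beilinson--Drinfeld Grassmannian together with the identification of convolution with tensor product. Applied to $\cV=\cO_{n+1}$, it identifies $\textup{conv}(\cO_{n+1})$, as a $(\dG_{E})^{n+1}$-representation, with $\cO_{n}$ precisely via the map $\mu_{n,n+1}$ dual to $(g_{1},\dots,g_{n+1})\mapsto(g_{1},\dots,g_{n-1},g_{n}g_{n+1})$; and $\mu_{n,n+1}$ carries the tautological inclusion $\cO((\dG_{E})^{n}/\dG_{E})\incl\cO_{n}$ and the evaluation-at-the-identity $\cO_{n}\to E$ to the corresponding data for $\cO_{n+1}$, so that the maps $c^{\sharp}$ and $c^{\flat}$ for the two values of $n$ are matched under (d).

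The remaining, and essential, input is the compatibility of the $\Gamma$-monodromy with fusion, and this is where Theorem~\ref{th:ss with ls} enters in place of Drinfeld's lemma. Since $H_{n+1,\cV}$ has singular support contained in $\calN(\LL_S)\times U^{n+1}$ with no covectors along $U^{n+1}$, it is locally constant in the $U^{n+1}$-directions and therefore genuinely carries a $\pi_{1}(U^{n+1})=\Gamma^{n+1}$-action on its restriction to any point; and $\D_{i,i+1}$ induces on $\pi_{1}$ the diagonal map $\Gamma^{n}\to\Gamma^{n+1}$ in the $i$-th and $(i+1)$-st factors, so the $\Gamma^{n}$-action on $\D_{i,i+1}^{*}H_{n+1,\cV}\simeq H_{n,\textup{conv}(\cV)}$ is the restriction of the $\Gamma^{n+1}$-action through it. Granting this, part (2) follows by chasing the diagram: using the identifications of the previous paragraph, $S_{\un\g'}(\mu_{n,n+1}f)$ for an $(n{+}1)$-tuple $\un\g'=(\g_{1},\dots,\g_{n+1})$ is computed on $H_{n+2,\cO_{n+1}}|_{\D(u_{0})}$, which fuses to $H_{n+1,\cO_{n}}|_{\D(u_{0})}$ along the legs $n$ and $n+1$; under this fusion the monodromy $(1,\g_{1},\dots,\g_{n+1})$ restricts to $(1,\g_{1},\dots,\g_{n-1},\g_{n}\g_{n+1})$ while $c^{\sharp}$, $c^{\flat}$ are carried to the corresponding maps for the $n$-tuple, whence $\Th_{n+1}(\mu_{n,n+1}f)(\g_{1},\dots,\g_{n+1})=\Th_{n}(f)(\g_{1},\dots,\g_{n-1},\g_{n}\g_{n+1})$, which is the second commuting square. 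For part (1), I would factor $\z\colon\{1,\dots,m\}\to\{1,\dots,n\}$ as a composite of a bijection, of inclusions of the form $\{1,\dots,k-1\}\incl\{1,\dots,k\}$, and of surjections $\{1,\dots,k\}\to\{1,\dots,k-1\}$ identifying the last two elements; bijections are handled by (a) and (c), the inclusions by (b), and the surjections by (d) together with the monodromy compatibility exactly as in part (2). Composing the resulting commuting squares gives (1).

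I expect the main obstacle to be making (d) and the monodromy compatibility precise: on the geometric side, constructing the canonical identification $\D_{i,i+1}^{*}H_{n+1,\cV}\simeq H_{n,\textup{conv}(\cV)}$ compatibly with the $\Aut^{0}(\cO)^{n}\ltimes G(\cO)^{n}$-equivariance carried by the kernels (and with the resolution conventions of Section~\ref{s:Gr res}); and, on the representation-theoretic side, checking that the ``framing'' slots of the kernels $\cO_{n}$ --- the extra $\dG_{E}$-factors governing the monodromy around each leg --- are matched by $\mu_{n,n+1}$, and by the maps induced by $\z$, in precisely the manner needed for the $\Gamma$-actions to intertwine. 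In other words, the real content is that the local constancy of Theorem~\ref{th:ss with ls} upgrades the \emph{a priori} formal $\Gamma^{n}$-symmetry of $H_{n,\cV}$ to one compatible with collisions of points; once that is available, everything else is diagram-chasing of the definitions of $c^{\sharp}$, $c^{\flat}$, and $S_{\un\g}$, just as in \cite{Laf}.
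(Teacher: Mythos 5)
Your proposal follows exactly the route the paper takes: the paper offers no written proof of this proposition beyond the one-sentence remark preceding it, namely that the argument is formally identical to \cite[D\'efinition-Proposition 11.3(c)(d)]{Laf} with the local constancy of Theorem~\ref{th:mult ss} (resp.\ Theorem~\ref{th:ss with ls}) replacing Drinfeld's lemma, and your write-up is a faithful (and more detailed) expansion of precisely that reduction, correctly isolating the fusion compatibility and the monodromy--fusion compatibility as the only non-formal inputs. One small caution: the phrase ``the monodromy $(1,\un\g')$ restricts to $(1,\g_1,\dots,\g_{n-1},\g_n\g_{n+1})$'' is not literally a restriction along the partial diagonal (which only sees the diagonal copy of $\Gamma$ in the last two factors); as you yourself note at the end, turning the commuting pair $(\g_n,\g_{n+1})$ of leg-monodromies into the single-leg monodromy of $\g_n\g_{n+1}$ is exactly the Drinfeld-lemma-type content that Lafforgue's formalism supplies and that local constancy justifies here.
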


\subsection{The geometric $\bR\to \bT$ map}\label{s:R to T}
Let $R=H^0(\Loc_{\dG}(U)_{E},\cO)$ be the ring of regular functions on the (non-derived) moduli stack of $\Loc_{\dG}(U)_{E}\simeq \Hom(\Gamma, \dG_{E})/\dG_{E}$. For any $\un\g=(\g_{1},\cdots, \g_{n})\in\Gamma^{n}$, we have an evaluation map 
\begin{equation*}
\xymatrix{\ev_{\un\g}: \Loc_{\dG}(U)_{E}\ar[r] & (\dG_{E})^{n}/\dG_{E}.}
\end{equation*}
Here $(\dG_{E})^{n}/\dG$ is the quotient of $(\dG_{E})^{n}$ by the diagonal conjugation action of $\dG_{E}$. Pulling back functions, we get an $E$-algebra map
\begin{equation*}
\xymatrix{\ev_{\un\g}^{*}:\cO((\dG_{E})^{n}/\dG_{E})\ar[r] & R.}
\end{equation*} 
Equivalently, we have an $E$-algebra map
\begin{equation*}
\xymatrix{\ev_{n}:\cO((\dG_{E})^{n}/\dG_{E})\ar[r] & C(\Gamma^{n},R).}
\end{equation*}

The maps $\{\ev_{n}\}$ satisfy the same properties as $\{\Th_{n}\}$ listed in Proposition \ref{p:univ ex} with $\cZ(\Sh_{\calN(\LL_S)} (\Bun(\LL_S), E))$ replaced by $R$. These constructions work for any semigroup $\Gamma$ (monoid without unit), with $R=R_{\Gamma}$ defined as the ring of regular functions on the stack $\Hom(\Gamma, \dG_{E})/\dG_{E}$ classifying conjugacy classes of semigroup maps $\Gamma\to \dG_{E}$. When $\Gamma$ is group, semigroup maps $\Gamma\to \dG_{E}$ are the same as group homomorphisms.

Now consider the category $\frR_{\Gamma}$ of $E$-algebras $R'$ equipped with algebra maps $\th_{n}: \cO((\dG_{E})^{n}/\dG_{E})\to C(\Gamma^{n}, R')$ satisfying the properties listed in Proposition \ref{p:univ ex}.

\begin{lemma} For any semigroup $\Gamma$, the category $\frR_{\Gamma}$ has an initial object $(R^{\univ}_{\Gamma}, \{\ev^{\univ}_{n}\})$.
\end{lemma}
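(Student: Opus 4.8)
The plan is to produce $R^{\univ}_{\Gamma}$ by an explicit presentation by generators and relations, and then read off the universal property directly. The first step is a reformulation of what an object of $\frR_{\Gamma}$ is: since $C(\Gamma^{n},R')=\prod_{\un\g\in\Gamma^{n}}R'$ as an $E$-algebra under pointwise operations, an $E$-algebra map $\th_{n}\colon\cO((\dG_{E})^{n}/\dG_{E})\to C(\Gamma^{n},R')$ is the same datum as a family of $E$-algebra maps $\th_{n,\un\g}\colon\cO((\dG_{E})^{n}/\dG_{E})\to R'$ indexed by $\un\g\in\Gamma^{n}$, via $\th_{n,\un\g}(f)=\th_{n}(f)(\un\g)$. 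Unwinding the maps $(-)^{\z}$ and $\mu_{n,n+1}$ appearing in Proposition~\ref{p:univ ex}, the two compatibilities there become, for such a family, the identities $\th_{n,\un\g}((-)^{\z}f)=\th_{m,(\g_{\z(1)},\dots,\g_{\z(m)})}(f)$ for every $\z\colon\{1,\dots,m\}\to\{1,\dots,n\}$ and every $f\in\cO((\dG_{E})^{m}/\dG_{E})$, and $\th_{n+1,(\d_{1},\dots,\d_{n+1})}(\mu_{n,n+1}^{*}f)=\th_{n,(\d_{1},\dots,\d_{n-1},\d_{n}\d_{n+1})}(f)$ for every $f\in\cO((\dG_{E})^{n}/\dG_{E})$.

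With this in hand I would define $R^{\univ}_{\Gamma}$ to be the commutative $E$-algebra presented by generators $[\un\g,f]$, one for each $n\ge1$, each $\un\g\in\Gamma^{n}$, and each $f\in\cO((\dG_{E})^{n}/\dG_{E})$, subject to the relations making $f\mapsto[\un\g,f]$ an $E$-algebra map for every fixed $\un\g$, together with the two families of relations $[\un\g,(-)^{\z}f]=[(\g_{\z(1)},\dots,\g_{\z(m)}),f]$ and $[\un\d,\mu_{n,n+1}^{*}f]=[(\d_{1},\dots,\d_{n-1},\d_{n}\d_{n+1}),f]$ transcribed from the previous paragraph. One then sets $\ev^{\univ}_{n}(f)(\un\g)=[\un\g,f]$; the algebra-map relations make each $\ev^{\univ}_{n}$ an $E$-algebra map into $C(\Gamma^{n},R^{\univ}_{\Gamma})$, and the remaining relations are exactly the two conditions of Proposition~\ref{p:univ ex}, so $(R^{\univ}_{\Gamma},\{\ev^{\univ}_{n}\})$ lies in $\frR_{\Gamma}$.

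For initiality, given any $(R',\{\th_{n}\})\in\frR_{\Gamma}$ with associated family $\{\th_{n,\un\g}\}$ I would send $[\un\g,f]\mapsto\th_{n,\un\g}(f)$; this respects every imposed relation by the very definition of $\frR_{\Gamma}$ (the algebra-map relations because each $\th_{n,\un\g}$ is an algebra map, the other two because of Proposition~\ref{p:univ ex}(1),(2)), hence extends uniquely to an $E$-algebra homomorphism $R^{\univ}_{\Gamma}\to R'$, which is visibly the unique morphism in $\frR_{\Gamma}$. I do not expect a genuine obstacle here: the only substantive point is to be sure that the algebra-map relations together with the two transcribed families form a complete list of the constraints imposed by membership in $\frR_{\Gamma}$, and this is immediate from the reformulation in the first step. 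As a sanity check one can also note that $\frR_{\Gamma}$ is the category of models of an essentially algebraic theory and so automatically has an initial object; but the explicit presentation is preferable, since it is what one uses afterwards to describe $\Spec R^{\univ}_{\Gamma}$ and to compare it with $\Spec R$.
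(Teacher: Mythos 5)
Your construction is correct: reformulating $\th_{n}$ as a family of algebra maps $\th_{n,\un\g}$ indexed by tuples, imposing exactly the transcribed identities as relations on the free commutative $E$-algebra on symbols $[\un\g,f]$, and reading off initiality from the presentation is a complete argument, and it works uniformly for every semigroup $\Gamma$ with no finiteness hypothesis. The paper takes a different, more economical route. It first repackages conditions (1) and (2) of Proposition~\ref{p:univ ex} as a single functoriality statement with respect to all semigroup maps of free monoids $\ph:F^{+}_{m}\to F^{+}_{n}$. Then, for $\Gamma$ finitely generated with a chosen surjection $\pi:F^{+}_{n}\to\Gamma$, it defines $R^{\univ}_{\Gamma}$ as the quotient of the \emph{single} ring $\cO((\dG_{E})^{n}/\dG_{E})$ by the ideal generated by $\ph_{\dG}^{*}(f)-\psi_{\dG}^{*}(f)$ for pairs $\ph,\psi$ with $\pi\circ\ph=\pi\circ\psi$, defining $\ev^{\univ}_{m}(\un\g)$ by factoring $\un\g:F^{+}_{m}\to\Gamma$ through $\pi$ and checking independence of the factorization; the general case is then a filtered colimit over finitely generated sub-semigroups. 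What the paper's version buys is the explicit realization of $R^{\univ}_{\Gamma}$ as a quotient of $\cO((\dG_{E})^{n}/\dG_{E})$ attached to a generating set, which is what makes the later comparison with $R$ transparent (e.g.\ that $\om$ is an isomorphism when $\Gamma$ is free, and surjective in characteristic zero). Your large presentation trades that concreteness for a shorter, choice-free proof of existence; since your $R^{\univ}_{\Gamma}$ is still visibly generated by the classes $[\un\g,f]$, whose images under $\om$ generate $R$, the subsequent arguments would still go through, but you would want to record that observation explicitly.
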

\begin{proof} For any $n\ge1$, let $F^{+}_{n}$ be the free monoid in $n$ generators. Then $\Hom(F_{n}^{+}, \Gamma)=\Gamma^{n}$. Any map of free monoids $\ph: F_{m}^{+}\to F_{n}^{+}$ induces a map $\ph_{\Gamma}: \Gamma^{n}=\Hom(F_{n}^{+},\Gamma)\xrightarrow{(-)\circ\ph} \Hom(F_{m}^{+},\Gamma)=\Gamma^{m}$, hence a pullback map $\ph^{*}_{\Gamma,R'}: C(\Gamma^{m}, R')\to C(\Gamma^{n},R')$ for any ring $R'$. Similarly, $\ph$ induces $\ph_{\dG}: (\dG)^{n}\to (\dG)^{m}$ and $\ph^{*}_{\dG}: \cO((\dG_{E})^{m}/\dG_{E})\to \cO((\dG_{E})^{n}/\dG_{E})$. Now the properties in Proposition \ref{p:univ ex} for $(R',\{\th_{n}\})\in\frR$ are equivalent to the statement that for any map of free monoids $\ph: F_{m}^{+}\to F_{n}^{+}$, the following diagram is commutative
\begin{equation*}
\xymatrix{     \cO((\dG_{E})^{m}/\dG_{E})\ar[r]^-{\th_{m}}\ar[d]^{\ph^{*}_{\dG}} &    C(\Gamma^{n}, R')\ar[d]^{\ph^{*}_{\Gamma,R'}}   \\
\cO((\dG_{E})^{n}/\dG_{E}) \ar[r]^-{\th_{n}} & C(\Gamma^{n}, R')}
\end{equation*}

We first consider the case $\Gamma$ is finitely generated as a semigroup. Choose a finite set of generators $e_{1},\cdots, e_{n}$ for $\Gamma$. Let $\pi: F^{+}_{n}\to \Gamma$ be the map sending the $i$-th generator to $e_{i}$. Let $I\subset \cO((\dG_{E})^{n}/\dG_{E})$ be the ideal generated by all elements of the form $\ph_{\dG}^{*}(f)-\psi_{\dG}^{*}(f)$ where $\ph,\psi: F^{+}_{m}\to F^{+}_{n}$ are two maps of semigroups such that $\pi\circ\ph=\pi\circ\psi$, and $f\in \cO((\dG_{E})^{m}/\dG_{E}), m\ge1$.  We let
\begin{equation*}
\xymatrix{   R^{\univ}=\cO((\dG_{E})^{n}/\dG_{E})/I.
}
\end{equation*}
We then define $\ev^{\univ}_{m}$ for any $m\ge1$. For any $\un\g\in\Gamma^{m}$, the corresponding map $\un\g: F_{m}^{+}\to \Gamma$ factors as $F^{+}_{m}\xrightarrow{\ph}F^{+}_{n}\xrightarrow{\pi}\Gamma$, and we then define
\begin{equation*}
\xymatrix{       \ev^{\univ}_{m}(\un\g): \cO((\dG_{E})^{m}/\dG_{E})\ar[r]^-{\ph_{\dG}^{*}} & \cO((\dG_{E})^{n}/\dG_{E})\ar@{->>}[r] & R.}
\end{equation*}
The construction of $R$ implies that $\ev^{\univ}_{m}(\un\g)$ is independent of the choice of $\ph$.  It is then easy to check that $(R^{\univ}_{\Gamma}, \{\ev^{\univ}_{m}\})$ is an initial object in $\frR_{\Gamma}$.

For general $\Gamma$, we write $\Gamma$ as a filtered colimit of  finitely generated sub-semigroups $\Gamma_{\a}$.  Then $R_{\Gamma}$ is the filtered colimit of $R_{\Gamma_{\a}}$. The initial object in $\frR_{\Gamma}$ is also the filtered colimit of the initial objects in $\frR_{\Gamma_{\a}}$. Therefore $R_{\Gamma}$ is the initial object in $\frR_{\Gamma}$.
\end{proof}

For our $\Gamma=\pi_{1}(U,u_{0})$, denote by $(R^{\univ},\{\ev^{\univ}_{n}\})$ the universal object in $\frR_{\Gamma}$.
\begin{cor}\label{cor: R to T} There are canonical $E$-algebra maps
\begin{equation}\label{eq:RRT}
\xymatrix{R=\upH^{0}(\Loc_{\dG}(U)_{E}, \cO) & R^{\univ}\ar[l]_-{\om}\ar[r]^-{\sigma} & \cZ(\Sh_{\calN(\LL_S)} (\Bun(\LL_S), E)).
}
\end{equation}
\end{cor}


\begin{prop}\label{p:R to T} In either of the following situations, there is a canonical ring homomorphism
\begin{equation}
\xymatrix{ \ov\sigma: R \ar[r] & \cZ(\Sh_{\calN(\LL_S)} (\Bun(\LL_S), E))
}
\end{equation}
such that $\sigma=\ov\sigma\circ \om$.
\begin{enumerate}
\item When $\Gamma$ is a free group, in which case $\om$ is an isomorphism.
\item When $\textup{char}(E)=0$.
\end{enumerate}
\end{prop}
\begin{proof}
In situation (1), if $\Gamma$ is a free group of rank $n$, then both $R^{\univ}$ and $R$ are canonically isomorphic to $\cO((\dG_{E})^{n}/\dG_{E})$. 

In situation (2), by Theorem \ref{thm: action with ls}, $\Perf(\Loc_{\dG}(U)_{E})$ acts on $\Sh_{\calN(\LL_S)} (\Bun(\LL_S), E)$, therefore the endomorphism ring of the tensor unit $\cO_{\Loc_{\dG}(U)_{E}}\in \Perf(\Loc_{\dG}(U)_{E})$, which is $R$, maps to the center of  $\Sh_{\calN(\LL_S)} (\Bun(\LL_S), E)$. This gives the map $\ov\sigma$. The factorization $\sigma=\ov\sigma\circ \om$ follows from the construction.
\end{proof}

\begin{remark}
The map $\ov\sigma$ in Proposition \ref{p:R to T} should be thought of as an analogue of the $\bR\to \bT$ map in number theory, where $\bR$ is a universal deformation ring of Galois representations, and $\bT$ is a certain Hecke algebra.
\end{remark}

In general we do not know whether $\om: R^{\univ}\to R$ is an isomorphism, but the next proposition shows that $\om$ always induces a bijection on the closed points of $\Spec R$ and $\Spec R^{\univ}$.


Recall from \cite[3.2.1]{Serre} that a homomorphism $\rho :\Gamma\to \dG(E)$ is called {\em completely reducible} if for any parabolic $P\subset \dG_{E}$ containing $\rho(\Gamma)$, there exists a Levi subgroup of $P$ that still contains $\rho(\Gamma)$. Let $\Hom^{cr}(\Gamma, \dG(E))\subset \Hom(\Gamma,\dG(E))$ be the subset of completely reducible homomorphisms.

\begin{prop}[Richardson, Bate-Martin-R\"ohrle, V.~Lafforgue]\label{p:ss rep} The affinization map $\Loc_{\dG}(U)_{E} \to \Spec  R$ and $\om$ induce  bijections of sets
\begin{equation*}
\xymatrix{L^{cr}:   \Hom^{cr}(\Gamma, \dG(E))/\dG(E)   \ar[r]^-{\sim} & \Max(R)\ar[r]^-{\sim} & \Max(R^{\univ})
}
\end{equation*}
\end{prop}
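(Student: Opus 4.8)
The statement factors into two separate bijections: first, $L^{cr}\colon \Hom^{cr}(\Gamma,\dG(E))/\dG(E) \xrightarrow{\sim} \Max(R)$, and second, the map $\Max(R)\to\Max(R^{\univ})$ induced by $\om$. The first bijection is entirely a statement in geometric invariant theory over the (possibly non-algebraically-closed-but-we-have-assumed-algebraically-closed) field $E$, and is genuinely due to the cited authors; the plan is to quote it rather than reprove it. Concretely, $\Loc_{\dG}(U)_E = \Hom(\Gamma,\dG_E)/\dG_E$ is a quotient stack whose coarse space is $\Spec R$ by definition, where $R = H^0(\Loc_{\dG}(U)_E,\cO) = \cO(\Hom(\Gamma,\dG_E))^{\dG_E}$ since $\dG_E$ is (geometrically) reductive. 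Because $\Gamma = \pi_1(U,u_0)$ is finitely presented, $\Hom(\Gamma,\dG_E)$ is an affine scheme of finite type over $E$, so $\Spec R$ is an affine GIT quotient $\Hom(\Gamma,\dG_E)\quot\dG_E$. The key input is then the theorem of Richardson (for $\dG$ reductive in characteristic zero) and Bate--Martin--R\"ohrle (in general), which identifies the closed $\dG_E$-orbits in $\Hom(\Gamma,\dG_E)$ — equivalently the points of the GIT quotient over the algebraically closed field $E$ — precisely with the $\dG(E)$-conjugacy classes of completely reducible homomorphisms $\rho\colon\Gamma\to\dG(E)$. Since $E$ is algebraically closed, $\Max(R)$ is in bijection with the $E$-points of the GIT quotient, and each such point corresponds to a unique closed orbit; this gives $L^{cr}$.

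\textbf{The second bijection.} For the map $\om^*\colon \Max(R)\to\Max(R^{\univ})$, I would argue as follows. There is a surjection $\cO((\dG_E)^n/\dG_E)\twoheadrightarrow R^{\univ}$ (for $n$ a choice of generators of $\Gamma$, from the construction of $R^{\univ}$ in the lemma above), and the evaluation maps $\ev_n$ on $R$ assemble into a morphism making $R$ an object of $\frR_\Gamma$, whence the universal map $\om\colon R^{\univ}\to R$. The claim is that $\om$ induces a bijection on maximal spectra. For surjectivity of $\om^*$ on closed points: a maximal ideal $\fm\subset R^{\univ}$ gives, by composing $\{\ev^{\univ}_n\}$ with $R^{\univ}\to R^{\univ}/\fm = E$, a system of $E$-algebra maps $\th_n\colon\cO((\dG_E)^n/\dG_E)\to C(\Gamma^n,E) = \Map(\Gamma^n,E)$ satisfying the compatibilities of Proposition~\ref{p:univ ex}; I would show (this is the analogue of V.~Lafforgue's reconstruction, \cite[Prop.~11.7 \emph{et seq.}]{Laf}) that such a system is exactly the data of a point of $\Hom^{cr}(\Gamma,\dG(E))/\dG(E)$, i.e.\ arises from pulling back along $\ev_{\un\g}$ for an actual completely reducible representation $\rho$. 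The idea is that the maps $\th_n$ encode, for each word $\un\g$, the conjugation-invariant functions of the tuple $(\rho(\gamma_1),\dots,\rho(\gamma_n))$, and the multiplicativity/functoriality constraints force these to be consistent with a homomorphism; the image of the resulting tuple in the GIT quotient is a closed orbit because the functions separate it from the boundary, hence $\rho$ is completely reducible. This produces a point of $\Max(R)$ mapping to $\fm$, giving surjectivity; injectivity of $\om^*$ follows because $R\to R/\om^{-1}(\text{that maximal ideal})$ recovers the same system $\{\th_n\}$, so the maximal ideal of $R^{\univ}$ is determined.

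\textbf{Main obstacle.} The routine part is the GIT translation (affineness of $\Hom(\Gamma,\dG_E)$ from finite presentation of $\Gamma$, reductivity of $\dG$, identification of $\Max R$ with $E$-points of the quotient) and the citation of Richardson / Bate--Martin--R\"ohrle for $L^{cr}$. The genuine work is the \emph{reconstruction step}: showing that an abstract system of compatible algebra maps $\{\th_n\colon\cO((\dG_E)^n/\dG_E)\to C(\Gamma^n,E)\}$ satisfying Proposition~\ref{p:univ ex}(1)--(2) comes from a completely reducible $\rho\colon\Gamma\to\dG(E)$, unique up to conjugacy. This is precisely the Betti incarnation of V.~Lafforgue's key lemma; the plan is to port his argument verbatim, using the Tannakian description of $\Rep((\dG)^n)$ and the fact that $\cO((\dG_E)^n/\dG_E) = \bigoplus_{V}(V^\vee\otimes V)^{\dG_E}$-type decompositions let one recover the pseudocharacter and then, by the characteristic-zero (or Bate--Martin--R\"ohrle general) theory of $G$-pseudorepresentations, an honest completely reducible representation. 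The one place requiring care is that $E$ is merely algebraically closed, not necessarily of characteristic zero, so one must invoke the general (not just Richardson's characteristic-zero) form of the structure theory, which is exactly why the proposition is attributed also to Bate--Martin--R\"ohrle and why the statement is phrased for $\Max$ rather than at the scheme level.
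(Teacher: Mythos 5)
Your proposal follows essentially the same route as the paper's proof: both factor the claim into (i) the GIT statement that $\Max(R)$ is in bijection with completely reducible homomorphisms, justified by Richardson and Bate--Martin--R\"ohrle (the paper specifically quotes \cite[Corollary 3.7]{BMR} to pass between complete reducibility of $\rho$ and closedness of the $\dG_E$-orbit of the tuple $(\rho(\g_1),\dots,\rho(\g_N))$ in $(\dG_E)^N$, then closes via \cite[1.3.2]{Rich}); and (ii) the reconstruction step identifying $\Max(R^{\univ})$, i.e.\ systems $(E,\{\th_n\})\in\frR_\Gamma$, with the same set of conjugacy classes, justified by citing V.~Lafforgue \cite[Prop.\ 11.7]{Laf} in characteristic zero and the discussion before \cite[Th\'eor\`eme 13.2]{Laf} in positive characteristic, again via \cite[Corollary 3.7]{BMR}. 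The paper cites the reconstruction result rather than sketching the pseudocharacter-to-representation mechanism as you do, and it phrases the conclusion as ``both $\Max(R)$ and $\Max(R^{\univ})$ parametrize the same set'' rather than separately arguing surjectivity and injectivity of $\om^*$; but these are only differences of exposition, not of substance. Your remark that the care needed in positive characteristic is exactly what forces the appeal to Bate--Martin--R\"ohrle matches the paper's treatment.
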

\begin{proof} Choose generators $\g_{1},\cdots, \g_{N}$ for $\Gamma$. Then $\Hom(\Gamma,\dG_{E})$ is a closed subscheme of $(\dG_{E})^{N}$ by evaluating at $\g_{1},\cdots, \g_{N}$.  A homomorphism $\rho\in \Hom(\Gamma, \dG(E))$ is completely reducible if and only if the Zariski closure of the group generated by $\rho(\g_{1}),\cdots, \rho(\g_{N})$ is a completely reducible subgroup of $\dG_{E}$. By \cite[Corollary 3.7]{BMR} (which is a combination of \cite[Theorem 16.4]{Rich} and \cite[Theorem 3.1]{BMR}), the latter condition is equivalent to that the $\dG_{E}$-orbit of $(\rho(\g_{1}),\cdots, \rho(\g_{N}))\in (\dG_{E})^{N}$ is closed in $(\dG_{E})^{N}$, which is also equivalent to that the $\dG_{E}$-orbit of $\rho$ is closed in $\Hom(\Gamma,\dG_{E})$. Therefore, $\rho\in \Hom^{cr}(\Gamma, \dG(E))$ if and only if its $\dG_{E}$-orbit is closed in $\Hom(\Gamma,\dG_{E})$. Then the first bijection follows from general properties of the affine case of the GIT quotient \cite[1.3.2]{Rich}.

Now the bijectivity of $L^{cr}$. The maximal ideals of $\Max(R^{\univ})$ correspond to objects $(E, \{\th_{n}\})\in \frR$ where the underlying algebra is $E$ itself. When $\textup{char}(E)=0$, the result of V.Lafforgue in \cite[Proposition 11.7]{Laf} (based on Richardson \cite[Theorem 3.6]{Rich}) says that $(E, \{\th_{n}\})\in \frR$ are in bijection with $\dG(E)$-conjugacy classes of semisimple (same as complete reducible in characteristic zero) representations $\Gamma\to \dG(E)$, hence $L^{cr}$ is a bijection. When $\textup{char}(E)>0$, the same statement is true, see \cite[paragraphs before Th\'eor\`eme 13.2]{Laf}, using again \cite[Corollary 3.7]{BMR}.
\end{proof}

\subsection{Betti Langlands parameters}\label{s:Betti par}

Define the full subcategory 
 \begin{equation*}
 \Sh_{\calN(\LL_S),!}(\Bun(\LL_S), E) \subset
 \Sh_{\calN(\LL_S)} (\Bun(\LL_S), E)
 \end{equation*}
of objects of the form $j_{!}\cF_{\cU}$, where $j:\cU\incl \Bun(\LL_S)$ is an open embedding of a finite type substack, and $\cF_{\cU}$ is a constructible complex on $\cU$, including the traditional requirement that its cohomology sheaves are bounded and finite-rank.

The construction of Betti Langlands parameters now easily follows from the existence of the map $\sigma: R^{\univ}\to \cZ(\Sh_{\calN(\LL_S)} (\Bun(\LL_S), E))$ and Proposition \ref{p:ss rep}.

\begin{theorem-construction}[Betti Langlands parameter]\label{cor: Langlands par} 
Let $E$ be an algebraically closed field. 
To any indecomposable object $\cF\in  \Sh_{\calN(\LL_S),!}(\Bun(\LL_S), E)$, one can canonically attach a $\dG(E)$-local system  $\rho_{\cF}\in \Loc_{\dG}(U)(E)$ whose image has reductive Zariski closure.  

We call $\rho_{\cF}$ the {\em Betti Langlands parameter} of $\cF$.
\end{theorem-construction}
\begin{proof}
Since $\cF$ is constructible with finite type support and indecomposable, the (underived) center  $Z(\End(\cF))$ is a finite-dimensional $E$-algebra without nontrivial idempotents. The image of the map 
$$
\xymatrix{
R^{\univ} \ar[r] & \cZ(\Sh_{\calN(\LL_S)} (\Bun(\LL_S), E))\ar[r] &  Z(\End(\cF))
}
$$ is a local artinian $E$-algebra, which then corresponds to a unique maximal ideal $\fm_{\cF}$ of $R^{\univ}$. We then define $\rho_{\cF}$ to be $L^{cr,-1}(\fm_{\cF})$.
\end{proof}

\subsubsection{Hecke eigensheaves}
Recall that an object $\cF\in \Sh_{\calN(\LL_S)}(\Bun(\LL_S), E)$ is a Hecke eigensheaf with eigenvalue $\rho\in \Loc_{\dG}(U)(E)$, if for any $n\ge1$ and $V\in \Rep((\dG_{E})^{n})$, there is an isomorphism on $\Bun(\LL_S)\times U^{n}$
\begin{equation*}
\xymatrix{  \a_{n,V}:     H_{n, V}(\cF) \simeq   \cF\boxtimes \rho_{V}.
}
\end{equation*}
Here $\rho_{V}$ is the local system on $U^{n}$ given by the representation
\begin{equation*}
\xymatrix{       \pi_{1}(U^{n}, u_{0})=\Gamma^{n}\ar[r]^-{\rho^{n}} & \dG(E)^{n}\ar[r] & \GL(V).
}
\end{equation*}
These isomorphisms $\{\a_{n,V}\}$ are required to satisfy compatibility conditions with the tensor structure on $\Rep((\dG_{E})^{n})$ and factorization structures when passing to diagonals.   For a detailed account of these compatibilities, see \cite[2.8]{GdJ}.

For a homomorphism $\rho:\Gamma\to\dG(E)$, the {\em semisimplification} of $\rho$ is the defined in \cite[3.2.4]{Serre}: choose a parabolic $P\subset \dG_{E}$ that minimally contains $\rho(\Gamma)$, and choose a Levi subgroup $L$ of $P$. Let $\pi:P\to L$ be the projection. The semisimplification $\rho^{ss}$ of $\rho$ is defined, up to $\dG(E)$-conjugation, as the composition
\begin{equation*}
\xymatrix{       \Gamma\ar[r]^{\rho} & P(E)\ar[r]^{\pi} & L(E)\ar@{^{(}->}[r]  & G(E).  \\
}
\end{equation*}

\begin{prop}
Let $E$ be an algebraically closed field. If $\cF\in \Sh_{\calN(\LL_S),!}(\Bun(\LL_S), E)$ is a Hecke eigensheaf with eigenvalue  $\rho\in \Loc_{\dG}(U)(E)$, then the Betti Langlands parameter $\rho_{\cF}$ constructed in Theorem \ref{cor: Langlands par} is isomorphic to the semisimplification of $\rho$.
\end{prop}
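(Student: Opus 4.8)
The plan is to evaluate the universal excursion operators of Section~\ref{s:Betti ex} explicitly on the Hecke eigensheaf $\cF$, and then extract the maximal ideal $\fm_{\cF}$ of $R^{\univ}$ using the geometric invariant theory recorded in Proposition~\ref{p:ss rep}. The heart of the argument is the following computation: for any $n\ge 1$ and any $\un\g=(\g_{1},\dots,\g_{n})\in\Gamma^{n}$, the operator of \eqref{eq:Ex F} is the scalar
\begin{equation*}
S_{\un\g,\cF}(f)=f\bigl(\rho(\g_{1}),\dots,\rho(\g_{n})\bigr)\cdot\id_{\cF},\qquad f\in\cO((\dG_{E})^{n}/\dG_{E}),
\end{equation*}
namely the value of the $\dG_{E}$-invariant function $f$ at the point $\ev_{\un\g}(\rho)\in(\dG_{E})^{n}/\dG_{E}$. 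To prove this I would first use the eigensheaf isomorphisms $\a_{n+1,V}$, taken over the finite-dimensional subrepresentations $V\subset\cO_{n}$ and then passed to the colimit, to identify $H_{n+1,\cO_{n}}|_{\D(u_{0})}(\cF)$ with $\cF\otimes_{E}\cO_{n}$; since both sheaves have zero singular codirections along $U^{n+1}$, this identification is automatically compatible with the monodromy action of $\pi_{1}(U^{n+1},\D(u_{0}))=\Gamma^{n+1}$, under which $(1,\un\g)$ acts as $\id_{\cF}$ tensored with the action of $(e,\rho(\g_{1}),\dots,\rho(\g_{n}))\in(\dG_{E})^{n+1}$ on $\cO_{n}$. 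Under the same identification, $c^{\sharp}_{\cF}$ becomes $\id_{\cF}$ tensored with the inclusion $\cO((\dG_{E})^{n}/\dG_{E})=\cO_{n}^{\D(\dG_{E})}\hookrightarrow\cO_{n}$, and $c^{\flat}_{\cF}$ becomes $\id_{\cF}$ tensored with evaluation at the identity of $(\dG_{E})^{n}$; composing the three arrows and unwinding the formula for the $(\dG_{E})^{n+1}$-action on $\cO((\dG_{E})^{n})$ produces the displayed scalar.

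Next I would reinterpret this as a factorization. Let $\ev_{[\rho]}\colon R\to E$ be evaluation at the $E$-point $[\rho]\in\Loc_{\dG}(U)(E)$, and write $\Th_{n,\cF}$ for $\Th_{n}$ followed by evaluation at $\cF$, so that $(Z(\End(\cF)),\{\Th_{n,\cF}\})$ is the object of $\frR_{\Gamma}$ obtained from $(\cZ(\Sh_{\calN(\LL_{S})}(\Bun(\LL_{S}),E)),\{\Th_{n}\})$ — which lies in $\frR_{\Gamma}$ by Proposition~\ref{p:univ ex} — by pushing forward along the tautological ring map. The computation of the first step says precisely that $\ev_{[\rho]}\circ\ev_{n}$ sends $f$ to the function $\un\g\mapsto f(\rho(\g_{1}),\dots,\rho(\g_{n}))=\Th_{n,\cF}(f)$, i.e.\ that $\ev_{[\rho]}$ followed by $E\hookrightarrow Z(\End(\cF))$ is a morphism $(R,\{\ev_{n}\})\to(Z(\End(\cF)),\{\Th_{n,\cF}\})$ in $\frR_{\Gamma}$. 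Since $R^{\univ}$ is initial in $\frR_{\Gamma}$, uniqueness of $\frR_{\Gamma}$-morphisms out of it forces $\sigma_{\cF}=\bigl(E\hookrightarrow Z(\End(\cF))\bigr)\circ\ev_{[\rho]}\circ\om$. In particular the image of $R^{\univ}$ in $Z(\End(\cF))$ is the field $E$, so the maximal ideal supplied by the construction is $\fm_{\cF}=\ker\sigma_{\cF}=\om^{-1}(\ker\ev_{[\rho]})$.

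Finally I would identify $\ker\ev_{[\rho]}$. It is the maximal ideal of $R$ at the closed point $\pi([\rho])$, where $\pi\colon\Loc_{\dG}(U)_{E}\to\Spec R$ is the affinization map. Choosing a cocharacter $\l$ of $\dG_{E}$ with $P(\l)$ the minimal parabolic containing $\rho(\Gamma)$, the limit $\lim_{t\to 0}\l(t)\rho\l(t)^{-1}$ is conjugate to $\rho^{ss}$ and lies in the closure of the $\dG_{E}$-orbit of $\rho$ in $\Hom(\Gamma,\dG_{E})$, so that $\pi([\rho])=\pi([\rho^{ss}])$. By Serre and Bate--Martin--R\"ohrle, $\rho^{ss}$ is completely reducible with closed $\dG_{E}$-orbit, and Proposition~\ref{p:ss rep} then identifies $\om^{-1}$ of the maximal ideal of $R$ at $\pi([\rho^{ss}])$ with $L^{cr}([\rho^{ss}])$. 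Hence $\fm_{\cF}=L^{cr}([\rho^{ss}])$ and $\rho_{\cF}=L^{cr,-1}(\fm_{\cF})=[\rho^{ss}]$, which is the assertion.

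The main obstacle is the first step: essentially all of the content lies in threading the eigensheaf isomorphisms, the construction of $c^{\sharp}$ and $c^{\flat}$ out of the inclusion and evaluation maps on $\cO_{n}$, and the compatibility of everything with monodromy, through the definition of the excursion operator, all the while keeping careful track of the left/right conventions in the $(\dG_{E})^{n+1}$-action on $\cO((\dG_{E})^{n})$. The second and third steps are then purely formal, relying only on the universal property of $R^{\univ}$ and the facts about GIT quotients already packaged in Proposition~\ref{p:ss rep}.
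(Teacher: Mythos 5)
Your proposal is correct and follows essentially the same route as the paper: the core of both arguments is the computation that on a Hecke eigensheaf the excursion operator $S_{\un\g,\cF}(f)$ acts by the scalar $f(\rho(\g_{1}),\dots,\rho(\g_{n}))$, combined with the fact that invariant functions do not distinguish $\rho$ from $\rho^{ss}$ and the bijection $L^{cr}$ of Proposition~\ref{p:ss rep}. The only differences are presentational — you extract the conclusion by factoring $\sigma_{\cF}$ through $\ev_{[\rho]}\circ\om$ via the initiality of $R^{\univ}$ and identify $\pi([\rho])=\pi([\rho^{ss}])$ by a cocharacter degeneration, whereas the paper directly compares the image of $S_{\un\g,\cF}(f)$ in the residue field with $f(\rho^{ss}(\g_{1}),\dots,\rho^{ss}(\g_{n}))$ — and these amount to the same thing.
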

\begin{proof}
If $\cF$ is a Hecke eigensheaf with eigenvalue $\rho$, and $f\in \cO((\dG_{E})^{n}/\dG_{E})$, $\un\g\in \Gamma^{n}$, the excursion operator $S_{\un\g,\cF}(f)$ (see \eqref{eq:Ex F}) is the composition
\begin{equation*}
\xymatrix{   \cF \ar[r]^-{c^{\sharp}}\ar[dr]_-{\id\otimes f} & H_{n+1,\cO_{n}}(\cF)_{\D(u_{0})}\ar[r]^{(1,\un\g)} \ar[d]^{\a_{n+1,\cO_{n}}} & H_{n+1, \cO_{n}}(\cF)_{\D(u_{0})} \ar[d]_{\a_{n+1,\cO_{n}}}\ar[r]^-{c^{\flat}} & \cF     \\
& \cF\otimes \cO_{n}\ar[r]^-{R_{\un\g}} & \cF\otimes \cO_{n}\ar[ur]_{\id\otimes\ev_{1}}
}
\end{equation*}
Computing the composition using the lower row (where $R_{\un\g}$ means the right translation action of $\un\g$ on $\cO_{n}=\cO((\dG_{E})^{n})$), we see that $S_{\un\g,\cF}(f)$ acts on $\cF$ by the scalar $f(\rho(\g_{1}),\cdots, \rho(\g_{n}))$.

Let $\rho^{ss}\in \Hom^{cr}(\Gamma, \dG(E))/\dG(E)$ be the semisimplification of $\rho$. Then
\begin{equation*}
f(\rho(\g_{1}),\cdots, \rho(\g_{n}))=f(\rho^{ss}(\g_{1}),\cdots, \rho^{ss}(\g_{n})).
\end{equation*}
On the other hand, by the construction of $\rho_{\cF}$, the image of $S_{\un\g,\cF}(f)\in Z(\End(\cF))$ in the residue field $E$ is $f(\rho_{\cF}(\g_{1}),\cdots, \rho_{\cF}(\g_{n}))$. Therefore
\begin{equation*}
f(\rho_{\cF}(\g_{1}),\cdots, \rho_{\cF}(\g_{n}))=f(\rho^{ss}(\g_{1}),\cdots, \rho^{ss}(\g_{n})), \quad\forall f\in \cO((\dG_{E})^{n}/\dG_{E}),\un\g\in \Gamma^{n}.
\end{equation*}
This implies $L^{cr}(\rho^{ss})=L^{cr}(\rho_{\cF})\in \Max(R^{\univ})$. By Proposition \ref{p:ss rep}, we have $\rho_{\cF}\simeq \rho^{ss}$. 
\end{proof}

%
%




\begin{thebibliography}{99}



\bibitem{AG} D. Arinkin and D. Gaitsgory, 
Singular support of coherent sheaves, and the geometric Langlands conjecture, 
Selecta Math. (N.S.) 21 (2015), no. 1, 1--199. 


\bibitem{BD}
A. Beilinson and V. Drinfeld,
Quantization of Hitchin's integrable system and Hecke eigensheaves, 
available
at {\tt math.uchicago.edu/\~{}mitya}.


\bibitem{BFN} D. Ben-Zvi, J. Francis, and D. Nadler, 
Integral transforms and Drinfeld centers in derived algebraic geometry,
J. Amer. Math. Soc. 23 (2010), 909--966.



\bibitem{BMR}
M. Bate, B. Martin and G. R\"ohrle. 
A geometric approach to complete reducibility.
Invent. Math. 161 (2005), no.1, 177--218 .



\bibitem{BNbetti} D. Ben-Zvi and D. Nadler, 
Betti Geometric Langlands,
arXiv:1606.08523.
  
  


\bibitem{B} R. Bezrukavnikov, 
On two geometric realizations of an affine Hecke algebra,
Publications math\'ematiques de l'IH\'ES,
June 2016, Volume 123, Issue 1, pp 1--67.


\bibitem{Del-tensor}
P.Deligne,
Cat\'egories tannakiennes,
in {\em Grothendieck Festschrift} vol II. Progress in Math. 87,
Birkh\"auser Boston (1990) pp. 111--195.

\bibitem{Gvanish}
D. Gaitsgory,
A generalized vanishing conjecture, 
available at
{\tt http://www.math.harvard.edu/\~{}gaitsgde/GL/GenVan.pdf}.



\bibitem{G}
D. Gaitsgory,
Construction of central elements in the affine Hecke algebra via nearby cycles,
Invent. Math. 144 (2001), no. 2, 253--280.


\bibitem{GdJ}
D. Gaitsgory,
On de Jong's conjecture,
Israel Journal of Math. 157 (2007), 155--191


\bibitem{Gi}
V. Ginzburg,
Perverse sheaves on a Loop group and Langlands duality,
arXiv:math/9511007.

\bibitem{Gin}
V. Ginzburg,
The global nilpotent variety is Lagrangian,
Duke Math. J. 109 (2001), no. 3, 511--519. 

\bibitem{KS}
M. Kashiwara and P. Schapira,
Sheaves on Manifolds,
Grundlehren der Mathematischen Wissenschaften, 292. Springer-Verlag, Berlin, 1990. x+512 pp.


\bibitem{Ko}
J. Koll\'ar,
Lectures on Resolution of Singularities,
 Annals of Math. Studies (166), 2007.



\bibitem{Laf}
V. Lafforgue,
Chtoucas pour les groupes r\'eductifs et param\'etrisation de Langlands globale,
arXiv:1209.5352.

\bibitem{L0}
G. Laumon, 
Correspondance de Langlands g\'eom\'etrique pour les corps de fonctions,
Duke Math. J. 54 (1987), no. 2, 309--359.

\bibitem{L}
G. Laumon,
Un analogue global du c\^one nilpotent,
Duke Math. J. 57 (1988), no. 2, 647--671. 

%

\bibitem{HA}
J. Lurie,
Higher Algebra. Available from {\tt http://www.math.harvard.edu/\~{}lurie/papers/HA.pdf}

\bibitem{MV}
I. Mirkovi\'c and  K. Vilonen, 
Geometric Langlands duality and representations of algebraic groups over commutative rings,
Ann. of Math. (2)  166  (2007),  no. 1, 95--143.




\bibitem{NY3pts}
D. Nadler and Z. Yun,
Geometric Langlands for $\SL(2)$, $\PGL(2)$ over the pair of pants,
arXiv: 1610.08398.

\bibitem{Rich}
R.W. Richardson,
Conjugacy classes of $n$-tuples in Lie algebras and algebraic groups, 
Duke Math. J. 57 (1988), p. 1--35.


\bibitem{RS}
M. Robalo and P. Schapira, 
A lemma for microlocal sheaf theory in the $\infty$-categorical setting, arXiv:1611.06789.


\bibitem{Serre}
J-P.Serre,
Compl\`ete R\'educibilit\'e,
S\'eminaire Bourbaki,  2003-2004, no. 932, p. 195--217.



\bibitem{Sp}
N. Spaltenstein,
Resolutions of unbounded complexes, 
Composito Math. 65 (1988), no. 2, 121--154.

\end{thebibliography}
\end{document}